\newtheorem{theorem}{Theorem}
\newtheorem{lemma}{Lemma}[section]
\newtheorem{rem}[lemma]{Remark}
\newtheorem{cor}[lemma]{Corollary}
\newtheorem{proposition}[lemma]{Proposition}
\newcommand{\Ax}{{\mathcal A}}
\newcommand{\R}{{\mathbb R}}
\newcommand{\N}{{\mathbb N}}
\newcommand{\Sp}{{\mathfrak  S}}
\newcommand{\Miy}{{\rm Miy }}
\newcommand{\A}{\mathcal A}
\newcommand{\aut}{\rm Aut}
\newcommand{\al}{\alpha}
\newcommand{\bt}{\beta}
\newcommand{\rad}{{\rm rad}}
\newcommand{\ad}{{\rm ad}}
\newcommand{\G}{\overline{G}}
\newcommand{\ov}{\overline{v}}
\newcommand{\ghost}{dormant }
\title[]{$(2B, 3A, 5A)$-subalgebras of the Griess algebra with alternating Miyamoto group}
\author{Clara Franchi}
\address{Dipartimento di Matematica e Fisica,
Universit\`a Cattolica del Sacro Cuore,
Via della Garzetta 48,
I-25133 Brescia, Italy}
\email{clara.franchi@unicatt.it}
\author{Mario Mainardis}
\address{Dipartimento di Scienze Matematiche, Informatiche e Fisiche, 
Universit\`a degli Studi di Udine, via delle Scienze 206,
I-33100 Udine, Italy}
\email{mario.mainardis@uniud.it}
\thanks{}
\subjclass[2010]{Primary  20C30, 20C15, 17B69, 05B99}
\date{}
\dedicatory{}
\begin{document}

\begin{abstract}
We use Majorana representations to study the subalgebras of the Griess algebra that have shape $(2B,3A,5A)$ and whose associated Miyamoto groups are isomorphic to $A_n$. We prove that these subalgebras exist only if $n\in \{5,6,8\}$. The case $n=5$ was already treated by Ivanov, Seress, McInroy, and Shpectorov. In case $n=6$ we prove that these algebras are all isomorphic and provide their precise description. In case $n=8$ we prove that these algebras do not arise from standard Majorana representations.
 \end{abstract}
 
 \maketitle

 \section{Introduction}\label{intro}

 This paper is a contribution to a project aimed at classifying, via Majorana representations, the subalgebras of the Griess algebra whose associated Miyamoto groups appear as factors over their centers of  {\it monstralizers} (see~\cite {N98}) of the subgroups $H$ of the Monster group $M$ that are isomorphic to the alternating group $A_5$ (see~\cite[5.11.3]{FMTII}). In~\cite[Table~3]{N98}, Norton listed the conjugacy classes of such subgroups $H$  and, for each such subgroup, the isomorphism type of its monstralizer. There are $8$ such classes,  labelled with a triple $(2X,3Y,5Z)$  (with $(X,Y,Z)$ in $\{A,B\} \times \{A,B,C\}\times\{A,B\}$), whose entries are the conjugacy classes (in Atlas notation~\cite{ATLAS}) of the $2$, $3$, and $5$ elements of $H$ in $M$.  Majorana representations of  monstralizers of an $A_5$-subgroup of type $(2A,3A,5A)$ (which are isomorphic to $A_{12}$) have been  considered in~\cite{Alonso, FIM2, A12}. In this paper we deal with the class of $A_5$-subgroups of type $(2B,3A,5A)$, whose monstralizers are isomorphic to $2.M_{22}.2$. An important inductive step towards this goal is to determine the Majorana representation induced on a submaximal subgroup $\overline G=Z\times G$ in the derived subgroup of such a monstralizer,  where   $Z$ is a group of order $2$ generated by $z$ and $G$ is isomorphic to $A_6$ (see~\cite[p. 39]{ATLAS}).  In this case, the set of Fischer (i.e. $2A$ in Atlas notation)  involutions in $\overline G$ is the {\it diagonal} set $D:=\{zs\: |\, s \mbox{ is a bitransposition in } A_6\}$  while the involutions in $G$ are Conway (i.e. $2B$) involutions.  
 %Given two commuting Fischer involutions $t$ and $s$  in $M$ with associated axes $a_t$ resp.  $a_s$,  the algebra $\langle \langle a_t,a_s\rangle\rangle$ has type $2A$ if and only if $ts$ is still a Fischer involution 
 Since the product of two elements of $D$ does not lie in $D$ and, by~\cite[p.258]{FMTII}, central elements are contained in the kernel of a Majorana representation,  by~\cite[Axiom (M8)]{IPSS}, the action of  $\overline G$ on the Griess algebra, induces a Majorana representation of $\overline G/\langle z \rangle$ (i.e. $A_6$) of shape $(2B,3A,5A)$ (see definition in Section~\ref{sec1}).

% Moreover, let $V$ be the subalgebra of the Griess algebra generated by the axes associated to the elements of $D$. Then $V$ contains certain idempotents, which are not detectable as $4$-axes in $V$ but are indeed $4$-axes in the entire  Griess algebra.  These  are in one-to-one correspondence with the cyclic subgroups of order $4$ of $\overline G$ not contained in $G$. 

 More generally, in this paper we consider Majorana representations of the alternating groups $A_n$, $n\geq 5$, with shape $(2B, 3Y, 5A)$, with $Y$ in $\{A,C\}$. The case $n=5$ has been investigated  by Ivanov, Seress, McInroy, and Shpectorov (see~\cite{IS12, Ser, MS}). By~\cite[Table 3]{N98}, all the Majorana representations of $A_5$ are based on embeddings  of $A_5$ or $2\times A_5$  in $M$. 
 %These are the cases in which the elements of order $3$ in $A_5$ are mapped to elements of type $3A$ or $3C$ of $M$ (in the ATLAS notation). 
 %
 Recall from~\cite{FIM2} that a Majorana representation of $A_n$ is {\it standard} if the bitranspositions of $A_n$ are Majorana involutions. In Section~\ref{Sec:start} we prove 

\begin{theorem}\label{A7} The alternating group  $A_n$ has 
\begin{enumerate}
\item [(a)]  a standard Majorana representation of shape $(2B, 3Y, 5A)$, $Y\in \{A,C\}$, if and only if $n\in\{5,6\}$.  
\item[(b)]   a nonstandard Majorana representation of shape $(2B, 3A, 5A)$, $Y\in \{A,C\}$, if and only if $n=8$.
\end{enumerate}
\end{theorem}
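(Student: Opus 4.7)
The plan is to split into existence (constructive) and non-existence (driven by a single Norton--Sakuma obstruction, namely that no representation of shape $(2B,3Y,5A)$ can accommodate a $6A$-subalgebra, because $6A$ contains a $2A$-subalgebra and the shape forbids $2A$-subalgebras of Majorana axes). The case $n=5$ I would cite from the work of Ivanov--Seress--McInroy--Shpectorov recalled in the introduction; the standard case $n=6$ and the nonstandard case $n=8$ are expected to come from restricting the action of $M$ on the Griess algebra to a suitable $A_6$- or $A_8$-subgroup inside the embedding $2.M_{22}.2 \hookrightarrow M$ recalled in the introduction. For $n=8$ the Majorana involutions should consist precisely of the $(2,2,2,2)$-class of $A_8 \cong L_4(2)$ rather than the bitranspositions, and the axioms are then verified by finite-dimensional linear algebra inside the Griess subalgebra.

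For the non-existence of standard representations with $n\ge 7$, I would exhibit in $A_n$ the pair $\sigma=(1,2)(3,4)$, $\tau=(1,5)(6,7)$. A direct computation gives $\sigma\tau=(1,5,2)(3,4)(6,7)$, of order $6$. By the Norton--Sakuma classification, $\langle a_\sigma,a_\tau\rangle$ must then be of type $6A$, which contains a $2A$-subalgebra generated by $a_\sigma$ and $a_{(\sigma\tau)^3}$; since $(\sigma\tau)^3=(3,4)(6,7)$ is again a bitransposition and hence a Majorana axis in the standard representation, this is a $2A$-subalgebra of Majorana axes, contradicting the shape. The cases $n\le 4$ are trivial since $A_n$ then has no element of order $5$.

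For the non-existence of nonstandard representations, the Majorana set $M$ is an $A_n$-invariant union of conjugacy classes of involutions. For $n\le 7$ the only class is the bitranspositions, so the argument above already excludes $n\in\{5,6,7\}$. For $n\ge 9$ I would proceed by cases: if $M$ contains the bitranspositions I again invoke the standard argument on an $A_7$-subgroup; if $M$ consists only of $(2,2,2,2)$-involutions I would use $\sigma=(1,2)(3,4)(5,6)(7,8)$, $\tau=(1,2)(3,5)(4,6)(7,9)$ in $A_9$, whose product $(3,6)(4,5)(7,9,8)$ has order $6$ and cube $(3,6)(4,5)$ is a bitransposition outside $M$, so the axis required inside $6A$ is missing; for larger involution classes $(2^k)$ with $k\ge 6$ (only possible for $n\ge 12$) analogous pairs give the same obstruction. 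The case $n=8$ survives because a short finite verification shows that in $A_8$ every product of two $(2,2,2,2)$-involutions has order in $\{1,2,3,4,5\}$, so the $6A$-obstruction never fires. This finite case analysis in $A_8$, together with the explicit construction of the $n=8$ representation inside the Griess subalgebra, is the main technical step.
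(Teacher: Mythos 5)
Your non-existence argument is essentially the paper's: the obstruction is Lemma~\ref{6A} (an order-$6$ product of Miyamoto involutions forces a $6A$-subalgebra, which by Axiom (M8) is incompatible with the $2B$ constraint on commuting pairs), and your witnesses $(1,2)(3,4),(1,5)(6,7)$ for the standard case $n\ge 7$ and $(1,2)(3,4)(5,6)(7,8),(1,2)(3,5)(4,6)(7,9)$ for the nonstandard case are exactly the ones the paper uses. (Your variant of the contradiction --- the cube $(3,6)(4,5)$ is a bitransposition, hence not in $T$ for a nonstandard representation, whereas (M8) forces it into $T$ --- is a legitimate alternative to the paper's $2A$-versus-$2B$ clash.) However, your treatment of larger involution classes $2^k$, $k\ge 6$, is a genuine gap, not a routine analogue: two fixed-point-free involutions of the same class need not have a product of order $6$ available in the obvious way (indeed in $A_8$ no two $2^4$-involutions multiply to an element of order $6$, which is why $n=8$ survives), so ``analogous pairs give the same obstruction'' needs an actual construction. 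The paper avoids constructing new pairs: it keeps the fixed $2^4$-pair $r,s$ supported on $\{1,\dots,9\}$ and multiplies both by a common involution $t$ supported off $\{1,\dots,9\}$, so that $rt,st$ lie in the relevant class of $T$ while $(rt)(st)=rs$ still has order $6$.

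The more serious problem is the existence half, which you do not actually prove. For $n=8$ your proposed source is impossible: $M_{22}$ has no subgroup isomorphic to $A_8\cong L_4(2)$ (its order-$20160$ subgroups are copies of $L_3(4)$; its alternating subgroups stop at $A_7$), so no $A_8$ sits inside $2.M_{22}.2$ and ``restricting the Griess algebra to $A_8\le 2.M_{22}.2$'' cannot be carried out. The paper instead pulls both missing cases out of the saturated Majorana representation of $A_{12}$: for $n=6$ via Lemma~\ref{a5a6}, realizing $A_6$ as the complement in $C_{A_{12}}(z)=\langle z\rangle\times H$ of a $2^6$-involution $z$ (so that the bitranspositions of $H$ are $2^4$-elements of $A_{12}$ and the induced shape is $(2B,3A,5A)$), and for $n=8$ via the chain $A_8\le A_8\times\langle(9,10)(11,12)\rangle\le A_{12}\le M$, citing \cite[Theorem 6]{A12}. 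Your observation that products of two $2^4$-involutions in $A_8$ never have order $6$ is correct and explains why the obstruction does not fire at $n=8$, but it is only a consistency check, not a construction; without a concrete embedding or an explicit algebra the ``if'' direction of (b) remains unproved.
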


The case $n=6$ and shape $(2B, 3A, 5A)$ seems to be particularly difficult. It has been addressed in~\cite{MS} with the use of the expansion algorithm, but the computation of the representation could not be completed within a reasonable time. 
In this paper we use a different approach, building up inductively a representation $V$ of $A_6$ with shape $(2B, 3A, 5A)$ from its proper subalgebras. Moreover, we introduce {\it \ghost $4$-axes} (see the definition in Section~\ref{sec1}) which are likely to be useful also in other situations, e.g. for the still open classification of the Majorana representations of $A_n$ for $8\leq n\leq 11$ (see~\cite[Conjecture 1]{A12}). Dormant $4$-axes are particular idempotents of length $2$ in $V$ which turn to be $4$-axes whenever $V$ is realised as a subalgebra of the Griess algebra. This fact will be discussed in more detail in Section~\ref{sec1}. Accordingly to what happens in the Griess algebra, we assume that \ghost $4$-axes are in one-to-one correspondence with the cyclic subgroups of order $4$ of $A_6$. We call a representation of $A_6$, with shape $(2B, 3Y, 5A)$ ($Y\in \{A,C\}$) satisfying this extra hypothesis a {\it diagonal Majorana representation}.

 \begin{theorem}\label{A6}
The alternating group $A_6$ has a unique diagonal Majorana representation of shape $(2B, 3A, 5A)$. This representation is $2$-closed of dimension $121$ and based on the embedding of $2\times A_6$ into the Monster.
\end{theorem}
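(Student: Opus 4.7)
The strategy is to construct $V$ explicitly from a candidate basis and then to verify uniqueness together with the realization inside the Monster.  First I would set up the candidate basis: the $45$ Majorana axes $a_s$ indexed by the bitranspositions $s$ of $A_6$, the $40$ axes $u_\rho$ indexed by the cyclic subgroups $\langle\rho\rangle$ of order $3$ (both single $3$-cycles and double $3$-cycles), and the $36$ axes $v_\sigma$ indexed by the cyclic subgroups $\langle\sigma\rangle$ of order $5$, for a total of $121$, on which $A_6$ acts by conjugation of indices.  The shape hypothesis fixes every dihedral subalgebra $\langle a_s,a_{s'}\rangle$ of type $1A$, $2B$, $3A$, $5A$, while the case $|ss'|=4$ is forced to the $4B$ shape (the $4A$ option would require a $2A$-subalgebra, which is excluded by the $2B$-hypothesis on the Majorana involutions); the corresponding $4B$-algebras are spanned by existing basis elements and therefore introduce no new axes.

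Next I would restrict $V$ to proper subgroups of $A_6$.  The twelve maximal subgroups isomorphic to $A_5$ (six in each of two conjugacy classes) give rise to Majorana subrepresentations of $A_5$ of shape $(2B,3A,5A)$; by~\cite{IS12, Ser, MS} these are uniquely determined, so all products $x\cdot y$ with $x$ and $y$ in a common $A_5$-subalgebra are pinned down.  The maximal subgroups of type $S_4$ and $3^2{:}4$, together with the overlaps of the various dihedral subalgebras, supply further compatibility constraints and eliminate any residual ambiguity in these products.

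The crucial new device is the introduction of the \ghost $4$-axes.  For each of the $45$ cyclic subgroups $\langle t\rangle$ of order $4$ in $A_6$, transporting the Griess-algebra formula produces a candidate element $w_t\in V$ lying in the $4B$-subalgebra $\langle a_s,a_{s'}\rangle$ with $ss'=t$.  The diagonal hypothesis requires every $w_t$ to be an idempotent of length $2$; expanding the identities $w_t\cdot w_t=w_t$ and $(w_t,w_t)=2$ against the partial product table built so far yields quadratic relations that couple products lying in different dihedral and $A_5$-subalgebras, thus giving information unavailable from subalgebra restriction alone.  The central obstacle will be to show that these identities, together with the $A_6$-equivariance, are simultaneously consistent and strong enough to determine every remaining product in $V$; this is where I expect the bulk of the technical work to lie, and where the approach departs most sharply from the expansion-algorithm method of~\cite{MS}.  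Once all products are computed, $2$-closedness holds by construction, $\dim V=121$ is confirmed by verifying that no extra linear dependencies arise among the candidate basis, and uniqueness follows because each step forces the unknown products uniquely.  The realization inside the Griess algebra is then obtained by matching $V$ with the subalgebra generated by the Majorana axes attached to the image of $2\times A_6$ in the Monster.
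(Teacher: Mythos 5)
There is a fundamental error at the very first step of your construction: you have the $4A$/$4B$ dichotomy backwards. Looking at the Norton--Sakuma table, it is the type $4B$ algebra whose ``opposite'' pair satisfies $a_0\cdot a_2=\tfrac{1}{2^3}(a_0+a_2-a_{\rho^2})$, i.e.\ generates a $2A$-subalgebra, while in type $4A$ one has $a_0\cdot a_2=0$, i.e.\ a $2B$-subalgebra. Hence the hypothesis that all order-$2$ products give type $2B$ forces every dihedral subalgebra coming from a product of order $4$ to be of type $4A$, not $4B$ (this is exactly Lemma~\ref{sh}(2) of the paper). Type $4A$ is \emph{not} spanned by $2$-axes: it introduces a genuine $4$-axis $v_\rho$, and there are $45$ cyclic subgroups of order $4$ in $A_6$, so your candidate basis is missing $45$ essential vectors. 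The numerical agreement $45+40+36=121$ with the true dimension is a coincidence: in the actual algebra the span of the $2$-, $3$-, and $4$-axes already has dimension $120$ (with the symmetric combinations $u_{ab}+u_{ab^{-1}}$ of the $3^2$-type $3$-axes linearly dependent on $2$-axes and $3$-cycle axes via the Pasechnik relations), and the $36$ positive $5$-axes contribute only \emph{one} further dimension, since each $w_f$ differs from $\tfrac{1}{36}\sum_e w_e$ by an element of that $120$-dimensional space. So your proposed set is very far from being a basis, and the plan built on it (in particular the claim that the order-$4$ dihedral subalgebras ``introduce no new axes'') collapses.

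Your description of the \ghost\ $4$-axes is also off target. There are no $4B$-subalgebras in this representation, so there is no ``$4B$-subalgebra $\langle a_s,a_{s'}\rangle$ with $ss'=t$'' in which to place a transported Griess-algebra element. The \ghost\ $4$-axes of the paper are the length-$2$ idempotents $\overline v_g$ living in the $13$-dimensional $(2B,3A)$-subalgebras $\hat V(S)$ generated by the six axes indexed by $S^{(2)}\setminus S'$ for $S\cong S_4$; the diagonal hypothesis (M8D) asserts that the two such idempotents attached to $\langle g\rangle$ by the two $S_4$-subgroups containing $g$ coincide. The substantive work is then to compute the inner products of these $\overline v_g$ (and of the genuine $4$- and $5$-axes) against everything else, deduce that each $\overline v_g$ and each difference $w_{f_1}-w_{f_2}$ lies in the span $U$ of the $2$-, $3$-, and $4$-axes, and only then close the multiplication on $U+\langle w\rangle$. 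None of this is reachable from the setup you propose, so the argument needs to be rebuilt from the corrected shape analysis onward.
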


While this paper was in preparation, a Majorana algebra of dimension $121$ and shape $(2B, 3A, 5A)$ was constructed in~\cite{Shumba} as a subalgebra of a larger algebra. Theorem~\ref{A6} answers, under the above extra hypothesis, the question posed in~\cite[p.703]{Shumba} about the uniqueness of that algebra.

In Section~\ref{sec1} we give the basic definitions and recall the known results that will be used in the sequel. Theorem~\ref{A7} is proved in Section~\ref{Sec:start}. From Section~\ref{Scalar} onwards, we fix a diagonal Majorana representation $V$ of $A_6$ with shape $(2B, 3A, 5A)$. In Section~\ref{Scalar} we compute the inner products between axes and odd axes in $V$ and determine the dimensions of the corresponding linear spans. In Section~\ref{maxsub} we consider the subalgebras of $V$ corresponding to the maximal subgroups of $A_6$ isomorphic to $S_4$ or $A_5$ and, by examining their intersections, we derive relations between axes, odd axes, and \ghost $4$-axes which are used in Section~\ref{moreinner} to show that all \ghost $4$-axes are contained in the linear span $U$ of $2$-, $3$-, and $4$-axes. In Section~\ref{5assi} we prove that $U$ has codimension $1$ in the linear span $V^\circ$ of all axes and odd axes. Moreover, we show that the $\R[A_6]$-module structure of $V^\circ$ extends naturally to $\R[S_6]$ and determine its decomposition into irreducible $\R[S_6]$-modules (Theorem~\ref{decv}). Finally, in Section~\ref{closure}, we show that the product in $V$ is uniquely determined and $V^\circ=V$.

%%%%%%%%%%%%%%%%%%%%%%%%%%%%%%%%%%%%%%%%%%%%%%%%%%%%%%%%%%%%%%%%%%%%%%%%%%%%%%%%%%%%%%%%%%%%%%%%%%%%%%%%%%%%%%%%%%%%%%%%%%% 
 
 \section{Preliminary results}\label{sec1}
 
Let $V$ be a real vector space endowed with a (non-associative) commutative product $\cdot$ and a positive definite symmetric bilinear form $(\:\:, \:)_V$ which associates with the algebra product, i.e. for every $u,v,w\in V$
$$
(u\cdot v, w)_V=(u,v\cdot w)_V.
$$
 For $a\in V$ let $\ad_a$ be the adjoint operator $v\mapsto av$ and for $\lambda \in \R$, denote by $V_\lambda^a$ the $\lambda$-eigenspace for $\ad_a$, i.e. $V_\lambda^a:=\{v\in V\:|\: av=\lambda v\}$. A vector $a\in V$ is called a {\it Majorana axis} if 
\begin{enumerate}
\item $a\cdot a=a$ and $(a, a)_V=1$
\item $\ad_a$ is semisimple with spectrum $S:=\{1,0,\tfrac{1}{4}, \tfrac{1}{32}\}$
\item $V_1^a=\{\lambda a\:|\:\lambda \in \R\}$
\item for every $\lambda, \mu\in S$, the eigenspaces $V_\lambda^a$, $V_\mu^a$ satisfy the Monster fusion law in Table~\ref{fusionlaw}, in the sense that 
$$
V_\lambda^a\cdot V_\mu^a\subseteq \bigoplus_{\nu \in \lambda\star \mu} V_\nu^a .
$$
\begin{table}
$$
 \begin{array}{c||c|c|c|c}
   \star& 1    & 0    & \tfrac{1}{4}      & \tfrac{1}{32}\\
   \hline
   \hline
   1     & 1    &\emptyset    & \tfrac{1}{4}      &\tfrac{1}{32}\\
   \hline
   0     & \emptyset    & 0    & \tfrac{1}{4}       &\tfrac{1}{32}\\
   \hline
   \tfrac{1}{4}  & \tfrac{1}{4}  & \tfrac{1}{4}  & 1,0  & \tfrac{1}{32}\\
   \hline
   \tfrac{1}{32}& \tfrac{1}{32}&\tfrac{1}{32}& \tfrac{1}{32}&1,0,\tfrac{1}{4}\\
      %&&&&
    \end{array}
    $$
\caption{Monster fusion law}\label{fusionlaw}
\end{table}
\end{enumerate} 
Conditions (1)-(4) yield that, for every Majorana axis $a$, the linear map $\tau_a:V\to V$ such that 
$$
v^{\tau_a}=\begin{cases}
\:\:\:v & \mbox{ if } v\in V_1^a\oplus V_0^a\oplus V_{\tfrac{1}{4}}^a\\
-v & \mbox{ if } v\in V_{\tfrac{1}{32}}^a
\end{cases}
$$
is an involutory algebra automorphism of $V$ which preserves the bilinear form. $\tau_a$ is called {\it Miyamoto involution (or Majorana involution)} associated to $a$ (see~\cite{Miya}). 

$V$ is called  {\it Majorana algebra} if it is generated (as an algebra) by a set $\Ax$ of Majorana axes. Moreover, usually Norton inequality is assumed, i.e. for every $u,v\in V$
$$
(u\cdot v, u\cdot v,)_V\leq (u\cdot u,v\cdot v)_V.
$$
In this paper, however, we will not make use of the Norton inequality.

Let $V$ be a Majorana algebra generated by the set of axes $\Ax$. The {\it automorphisms} of $V$ are the algebra automorphisms that are also isometries of the vector space $V$. In particular, Miyamoto involutions are automorphisms of $V$. Define the {\it Miyamoto group $\Miy(\Ax)$ of $V$ with respect to} $\Ax$, as follows: 
\begin{equation}\label{miy}
 \Miy(\Ax):=\langle \tau_a \:|\: a\in \Ax \rangle .
 \end{equation} 
Since, by~\cite[Lemma 3.5]{KMS} $ \Miy(\Ax)=\Miy(\Ax^{\Miy(\Ax)})$, one usually assumes (as we shall do in this paper)  that $$\Ax=\Ax^{\Miy(\Ax)}.$$
Let $$G:=\Miy(\Ax) \:\:\mbox{ and }\:\:
T:=\{\tau_a\:|\:a\in \Ax\},
$$
the quintet
$$
(V , (\:, )_V, \cdot ,\Ax , G),
$$
is called a {\it Majorana representation of $G$ on $V$ with respect to $T$}. When the setting is clear, we'll simply say that $V$ is a Majorana representation of $G$. The {\it dimension} of the representation is the dimension of $V$.  
 For $n\in \N$, the {\it $n$-closure} of the representation is the subspace of $V$ spanned by all products of length $n$ of elements in $\Ax$. The representation is {\it $n$-closed} if it is equal to its $n$-closure. 
If $H$ is a subgroup of $G$, denote by $V(H)$ the subalgebra of $V$ generated by all the axes $a\in \Ax$ such that $\tau_a\in H$.

The Griess algebra $V^\sharp$ is a Majorana algebra and, if $2A$ denotes the set of $2A$-axes in $V^\sharp$, then $(V^\sharp, (\:, )_{V^\sharp}, \cdot, 2A, M)$ is a Majorana representation of the Monster group $M$ with respect to the set of Fisher involutions (see e.g.~\cite{Iva}). If $G$ is a subgroup of $M$ generated by a set $T$ of Fisher involutions and $\Ax$ is the set of axes corresponding to $T$, then $(V^\sharp, (\:, )_{V^\sharp}, \cdot,\Ax, G)$ is a Majorana representation of $G$ and we say that it is {\it based on an embedding into the Monster}. 

A fundamental result in the Majorana theory is the combination of Norton's classification of subalgebras of the Griess algebra generated by two axes~\cite{N96} and a similar result of Sakuma~\cite{Sakuma} for VOA's, relying on earlier work of Miyamoto, reproved independently in~\cite{IPSS} in the context of Majorana Theory.
\medskip

\noindent{\bf Norton-Sakuma Theorem}~\cite{N96,Sakuma}
{\it Let $V$ be a Majorana algebra, let $a_0$ and $a_1$ be two axes in $V$, $\rho:= \tau_{a_0}\tau_{a_1}$, and $N:=|\rho|$. For $i\in \N$, let  
$a_{2i}=a_0^{\rho^i}$ and $a_{2i+1}=a_1^{\rho^i}.$  
Then $N\leq 6$ and $\langle \langle a_0, a_1\rangle \rangle$ is one of the nine algebras in Table~\ref{NS}.}
\medskip

 \begin{table}
{\tiny 
$$
\begin{array}{|l|l|l|l|}
\hline

\mbox{Type}& \mbox{Basis} &\mbox{Structure constants} & \mbox{Inner products}\\
\hline
1A & \begin{array}{l} a_0 \end{array}&
\begin{array}{l}
 a_0\cdot a_0=a_0 \end{array}
 &
\begin{array}{l}
(a_0,a_0)=1 \end{array}\\
 \hline
2A 
&\begin{array}{l} a_0,\\ a_1,\\ a_\rho \end{array}
&
\begin{array}{l}
 a_0\cdot a_1=\tfrac{1}{2^3}(a_0+a_1-a_\rho),\\
% \\
 a_0\cdot a_\rho=\tfrac{1}{2^3}(a_0+a_\rho-a_1)\\
 %\\
 a_\rho\cdot a_\rho=a_\rho
 \end{array}
&
\begin{array}{l}
 (a_0, a_1)=\tfrac{1}{2^3}\\
% \\
 (a_0, a_\rho)=\tfrac{1}{2^3}\\
% \\
 (a_1, a_\rho)=\tfrac{1}{2^3}
 \end{array}
 \\
% & & & \\
 \hline
%  & &  &\\
2B &\begin{array}{l} a_0,\\ a_1 \end{array}& \begin{array}{l} 
a_0\cdot a_1=0 \end{array}& \begin{array}{l} (a_0, a_1)=0 \end{array}\\
% & & & \\
 \hline
%  & & & \\

 3A & \begin{array}{l} a_{-1},\\ a_0,\\ a_1,\\ u_\rho \end{array}&
 \begin{array}{l}
 a_0\cdot a_1=\tfrac{1}{2^5}(2a_0+2a_1+a_{-1}) -\tfrac{3^3\cdot 5}{2^{11}}u_\rho\\
% \\
  a_0\cdot u_\rho=\tfrac{1}{3^2}(2a_0-a_1-a_{-1})+\tfrac{5}{2^5}u_\rho\\
 % \\
    u_\rho\cdot u_\rho= u_\rho\\
  \end{array}
  &
  \begin{array}{l}
   (a_0, a_1)=\tfrac{13}{2^8},\\ 
%   \\
   (a_0, u_\rho)=\tfrac{1}{4},\\
%   \\
   (u_\rho, u_\rho)=\tfrac{2^3}{5}
 \end{array}\\
% & & & \\
 \hline
% & & & \\
 3C &\begin{array}{l} a_{-1},\\ a_0,\\ a_1 \end{array}&  \begin{array}{l} a_0\cdot a_1=\tfrac{1}{2^6}(a_0+a_1-a_{-1})\end{array} & \begin{array}{l}(a_0,a_1)=\tfrac{1}{2^6}\end{array} \\
% & & & \\
 \hline
% & &  &\\
 4A & \begin{array}{l}a_{-1}, \\a_0, \\a_1,\\ a_2, \\v_\rho \end{array} 
 &  \begin{array}{l}         
 a_0\cdot a_1= \tfrac{1}{2^6}(3a_0+3a_1+a_{-1}+a_2-3v_\rho) \\
% \\
  a_0\cdot a_2=0\\
% \\
 a_0\cdot v_\rho= \tfrac{1}{2^4}(5a_0-2a_1-2a_{-1}-a_2+3v_\rho)\\
% \\
 v_\rho\cdot v_\rho=v_\rho
 \end{array}
 & \begin{array}{l}  
 (a_0,a_1)=\tfrac{1}{2^5}\\
% \\
 (a_0,a_2)=0 \\
% \\
 (a_0,v_\rho)=\tfrac{3}{2^3}\\
% \\
  (v_\rho,v_\rho)=2
           \end{array} \\
% & & & \\
  \hline
% & & & \\
4B & \begin{array}{l} a_{-1}, \\a_0,\\ a_1,\\a_2\\ a_{\rho^2} \end{array}
&  \begin{array}{l}   
a_0\cdot a_1= \tfrac{1}{2^6}(a_0+a_1-a_{-1}-a_2+a_{\rho^2}) \\
%\\
 a_0\cdot a_2= \tfrac{1}{2^3}(a_0+a_2-a_{\rho^2})
 \end{array}
& \begin{array}{l}    
(a_0,a_1)=\tfrac{1}{2^6}\\
% \\
 (a_0,a_2)=\tfrac{1}{2^3} \\
% \\
 (a_0,a_{\rho^2})=\tfrac{1}{2^3}  
 \end{array} \\
% & &  &\\
 \hline
% & &  &\\
 5A & \begin{array}{l}a_{-2},\\ a_{-1},\\ a_0, \\a_1,\\ a_2, \\w_\rho \end{array}
&  \begin{array}{l}  
 a_0\cdot a_1= \tfrac{1}{2^7}(3a_0+3a_1-a_{-1}-a_2-a_{-2})+ w_\rho \\
 % \\
 a_0\cdot a_2=\tfrac{1}{2^7}(3a_0+3a_2-a_1-a_{-1}-a_{-2})- w_\rho \\
%  \\
 a_0\cdot w_\rho= \tfrac{7}{2^{12}}(a_1+a_{-1}-a_2-a_{-2})+ \tfrac{7}{2^{5}}w_\rho \\
% \\
 w_\rho\cdot w_\rho=\tfrac{5^2 \cdot 7}{2^{19}}(a_0+a_1+a_{-1}+a_2+a_{-2}) 

           \end{array}
& \begin{array}{l}     
 (a_0,a_1)=\tfrac{3}{2^7}\\
%  \\
 (a_0,w_\rho)=0\\
%  \\
  (w_\rho,w_\rho)=\tfrac{5^3 \cdot 7}{2^{19}}
        \end{array}  \\
% & & & \\
 \hline
%  & & & \\
 6A & \begin{array}{l}
 a_{-2},\\ a_{-1},\\ a_0,\\
 a_1,\\ a_2,\\ a_3, \\
 a_{\rho^3},\\ u_{\rho^2} \end{array}
& \begin{array}{l}   
 a_0\cdot a_1= \tfrac{1}{2^6}(a_0+a_1-a_{-1}-a_2-a_{-2}-a_3+a_{\rho^3})+\tfrac{3^2\cdot5}{2^{11}} u_{\rho^2} \\
% \\
  a_0\cdot a_2= \tfrac{1}{2^5}(2a_0+2a_2 + a_{-2})-\tfrac{3^3\cdot5}{2^{11}} u_{\rho^2} \\
% \\
 a_0\cdot a_3=\tfrac{1}{2^3}(a_0+a_3-a_{\rho^3})\\
% \\
 a_0\cdot u_{\rho^2}= \tfrac{1}{3^{2}}(2a_0-a_2-a_{-2})+ \tfrac{5}{2^{5}}u_{\rho^2} \\
% \\
 a_{\rho^3}\cdot u_{\rho^2}=0

          \end{array}
& \begin{array}{l}    
(a_0,a_1)=\tfrac{5}{2^8}\\
% \\
 (a_0,a_2)=\tfrac{13}{2^8}\\
% \\
  (a_0,a_3)=\tfrac{1}{2^3}\\
%  \\
  ( a_{\rho^3}, u_{\rho^2})=0

         \end{array}  \\
% & & & \\
\hline

\end{array} 
 $$}
 \caption{The nine types of Norton-Sakuma algebras}
  \label{NS}
 \end{table}
 
   \begin{rem}
 \label{rem1} Note that, by the Norton-Sakuma Theorem, 
 dihedral algebras of type  $1A$, $2A$, $2B$, $3C$, and $4B$ are linearly spanned by axes, while for those of type $NA$, for $N\in\{3,4,5,6\}$, together with the axes a further vector (denoted by $u_\rho$, $v_\rho$, $w_\rho$, and $u_{\rho^2}$, respectively) is needed. \end{rem}
 
 The extra vectors $u_\rho$, $v_\rho$, $w_\rho$, resp.  $u_{\rho^2}$, above are called {\it $N$-axes}, for $N\in \{3,4,5\}$, resp. {\it $3$-axis }, for $N=6$, or simply  {\it odd axes}, when $N$ needs not to be specified. 
 
 For $N\in \{2,3,4,5\}$, denote by $V^{(NA)}$ the subspace of $V$ spanned by the $N$-axes (here the $2$-axes are the axes, i.e. the elements of $\A$). 
 Note that, by Table~\ref{NS}, the Norton-Sakuma algebras are $2$-closed. More generally, the $2$-closure of $V$, which we shall denote by $V^\circ$, is the linear span of the axes and the odd axes of $V$.

\begin{rem}
\label{remG}
 Let $\langle \langle a_0, a_1\rangle \rangle$ be a subalgebra of  the Griess algebra of type $NA$, with $N\in\{3,4,5,6\}$,  generated by two axes $a_0$ and $a_1$, and let  $\rho:=\tau_{a_0} \tau_{a_1}$. Then  the odd axis $u_\rho$ , $v_\rho$, $w_\rho$, or $u_{\rho^2}$, depends only on  the conjugacy class  of $\rho$ in the dihedral group $\langle \tau_{a_0}, \tau_{a_1}\rangle$. In particular, if $N\in\{3,4, 6\}$,  we have only one conjugacy class, if $N=5$ we have two conjugacy classes with representatives $\rho$ and $\rho^2$, respectively. In the latter case $w_{\rho^2}=-w_\rho$ (see~\cite[Introduction and Table 1]{N96}).
 \end{rem}
 
  This fact, together with the correspondence between axes and Miyamoto involutions is axiomatised  in the following conditions, usually known as Condition 2A and Axiom M8, respectively.
 \begin{enumerate}
 \item [(2A)]  For every pair of axes $a,b\in \Ax$, if $\tau_a\tau_b\in T$, then $\langle \langle a, b\rangle \rangle$ has type $2A$ and $\tau_a\tau_b=\tau_{c}$, where $c=a+b-8ab$.
 \item [(M8)] 
The vectors $a_\rho$, $a_{\rho^2}$, $a_{\rho^3}$ in the $2A$, $4B$, and $6A$ type algebras respectively, are Majorana axes in $V$ and, for $i\in \{1,2,3\}$,  $(\tau_{a_0}\tau_{a_1})^i=\tau_{a_{\rho^i}}\in T$ for $i\in \{1,2,3\}$. The vectors $u_\rho$, $v_{\rho}$, and $w_\rho$ in the $3A$, $4A$, and $5A$ type algebras depend uniquely only on  the conjugacy class  of $\rho$ in the dihedral group $\langle \tau_{a_0}, \tau_{a_1}\rangle$, and $w_{\rho^2}=-w_\rho$. \end{enumerate}

Let $(V , (\:, )_V, \cdot ,\Ax , G)$ be a Majorana representation of $G$ and suppose that $G$ contains a subgroup $H$ isomorphic to $S_4$ such that $V$ induces a Majorana representation $\hat V(H)$ on $H$ of shape $(2B, 3A)$. By~\cite[Section 5]{IPSS}, $\hat V(H)$ contains three idempotents  $\overline v_{g_i}$ ($i\in \{1,2,3\}$) of length $2$, uniquely depending (in $\hat V(H)$) on the subgroup $\langle g_i \rangle$ of order $4$ in $H$. Note that $\overline v_{g}$ is not a $4$-axis in $\hat V(H)$, since there is no dihedral subalgebra of type $4A$ in $\hat V(H)$. We call $\overline v_{g}$, a {\it \ghost $4$-axis}.
This situation is realized when $G=M$ is the Monster group, $V$ is the Griess algebra, and $H$ is a subgroup of $M$ isomorphic to $S_4$ such that the traspositions of $H$ are involutions of type $2A$ in $M$ and the bitraspositions of $H$ are elements of type $2B$ in $M$. Norton showed that, in this case, the vectors $\overline v_{g}$ in $\hat V(H)$ are true $4$-axes in $V$  depending uniquely on $\langle g\rangle$ (see~\cite[p.2462]{IPSS}). We shall therefore assume the following
\medskip 

\begin{enumerate}
    \item[(M8D)] \ghost $4$-axes $\overline v_{g}$ depend uniquely on the subgroup $\langle g \rangle$ of $ G$.
\end{enumerate}
\medskip

We call $(V , (\:, )_V, \cdot ,\Ax , G)$ a {\it diagonal Majorana representation} if, for every $a_1, a_2\in \Ax$ such that $|\tau_{a_1}\tau_{a_2}|=2$, $\langle \langle a_1, a_2\rangle \rangle $ is a Norton-Sakuma algebra of type $2B$ and Condition (M8D) holds.
\medskip

Given a Majorana algebra $V$, by the Norton-Sakuma Theorem, every subalgebra of $V$ generated by two distinct axes is an algebra of type $NL$, with $NL\in \{2A, 2B, 3A, 3C, 4A, 4B, 5A, 6A\}$. Clearly, the type $NL$ is constant on the $Aut(V)$-orbits of the $2$-generated subalgebras. The function that assigns the type $NL$ to each $Aut(V)$-orbit of the $2$-generated subalgebras of $V$ is the {\it shape} of $V$. 
\medskip
%\begin{rem}
%\label{shapemb}
%By Table~\ref{NS} we see that the following embeddings of Norton-Sakuma algebras hold
%$$2A\hookrightarrow 4B, \:\:2A\hookrightarrow 6A, \:\:2B\hookrightarrow 4A, \:\:3A\hookrightarrow 6A$$
%\end{rem}

 \begin{lemma}
 \label{sh}
 Let  $(V, (\:, ), \cdot , \Ax, A_6)$ be a diagonal Majorana representation of $A_6$ with respect to the set $T$ of its bitranspositions and let $s,t\in T$. Then $|st| \in\{2,3,4,5\}$, moreover
 \begin{enumerate}
 \item if $st$ has order $2$, then $\langle \langle a_{s}, a_{t}\rangle \rangle $ has type $2B$;
 \item if $st$ has order $4$, then $\langle \langle a_{s}, a_{t}\rangle \rangle $ has type $4A$;
 \item if $st$ has order $5$, then $\langle \langle a_{s}, a_{t}\rangle \rangle $ has type $5A$.
 \end{enumerate}
 \end{lemma}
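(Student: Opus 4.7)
The plan is to combine the Norton--Sakuma classification with the defining property of a diagonal Majorana representation. Since the orders of non-identity elements of $A_6$ are exactly $2,3,4,5$, we immediately get $|st|\in\{2,3,4,5\}$ (the statement implicitly excludes the trivial case $s=t$, in which $\langle \langle a_s,a_t\rangle \rangle$ is the one-dimensional $1A$ algebra). Part~(1) is then simply the definition of diagonal Majorana representation, and Part~(3) follows from the Norton--Sakuma theorem, which lists a single dihedral algebra for $N=5$, namely type $5A$.

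The only substantive assertion is Part~(2). By Norton--Sakuma the subalgebra $\langle \langle a_s,a_t\rangle \rangle$ has type $4A$ or $4B$, and I would rule out $4B$ by contradiction. Put $\rho:=\tau_{a_s}\tau_{a_t}$ and, in the notation of Table~\ref{NS}, let $a_0:=a_s$ and $a_2:=a_0^{\rho}$; from the dihedral identity $\tau_{a_0}\rho\tau_{a_0}=\rho^{-1}$ one gets $\tau_{a_0}\tau_{a_2}=\rho^2$, an element of order $2$. Assume now the type is $4B$. Reading off the structure constants from Table~\ref{NS} one recognises $\langle \langle a_0,a_2\rangle \rangle$ as a Norton--Sakuma algebra of type $2A$ with third axis $a_{\rho^2}$; in particular $(a_0,a_2)_V=\tfrac{1}{8}$. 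On the other hand, since $|\tau_{a_0}\tau_{a_2}|=2$, the diagonal hypothesis forces $\langle \langle a_0,a_2\rangle \rangle$ to have type $2B$, whence $(a_0,a_2)_V=0$. This contradiction rules out $4B$ and gives Part~(2).

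The only bookkeeping point worth noting is that the pair $(a_0,a_2)$ falls within the scope of the diagonal axiom: both axes belong to $\Ax$ because $\Ax$ is closed under $\Miy(\Ax)$ and $a_2=a_0^{\rho}$, while $|\tau_{a_0}\tau_{a_2}|=2$ by the dihedral identity above. I do not anticipate any real obstacle; the whole argument reduces to a one-line consistency check between the inner product predicted by a hypothetical $4B$-type subalgebra and the value forced by diagonality.
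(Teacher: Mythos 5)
Your proposal is correct and follows essentially the same route as the paper: parts (1) and (3) are immediate from the definition and from Norton--Sakuma, and for part (2) you rule out type $4B$ by observing that it would contain a $2A$-subalgebra on the pair $(a_0,a_2)=(a_s,a_s^{\rho})$, contradicting the diagonal hypothesis. The paper phrases the contradiction as ``type $2A$ against (1)'' rather than via the inner-product values $\tfrac{1}{8}$ versus $0$, but this is the same argument.
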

 
 \begin{proof}
If $st$ has order $2$, the result follows by the definition.

 Assume $st$ has order $4$ and suppose, by contradiction, that the algebra $\langle \langle a_{s}, a_{t}\rangle \rangle $ has type $4B$.  By Table~\ref{NS}, it follows that  its  subalgebra 
$\langle \langle a_{t}, a_{t}^s\rangle \rangle$
 is of type $2
A$, against  {\it (1)}.

 If $st$ has order $5$, the result follows by the Norton-Sakuma Theorem. 
 \end{proof}

 We close this section by listing some elementary properties of $A_6$ that we assume the reader is confident with and will be used throughout this paper without further reference.

\begin{proposition}\label{maximal}
$\:$
\begin{enumerate}
    \item All involutions in  $A_6$ are conjugate.
    \item The elements of order $5$ in $A_6$ part into two conjugacy classes which are closed under inversion.
    \item $A_6$ has five conjugacy classes of maximal subgroups: two classes isomorphic to $A_5$, two classes isomorphic to $S_4$, and one class isomorphic to $3^2:4$ (see e.g.~\cite{ATLAS}). 
    \item Every element of order $4$ in $A_6$ is contained in exactly two maximal subgroups isomorphic to $S_4$. In particular, if $t$ is an involution in $A_6$, there are exactly two maximal subgroups isomorphic to $S_4$ whose derived subgroups contain $t$.
    \item If $S$ is a subgroup of $A_6$ isomorphic to $S_4$, there are two subgroups isomorphic to $A_5$ containing the derived subgroup $S^\prime$ of $S$.
    \item Let $S$ be a subgroup of $A_6$ isomorphic to $S_4$ and let $s\in S\setminus S^\prime$ be an element of order $2$. Then there are an involution $r$ and an element $g$ of order $4$ in $S$ such that $C_S(s)=\{1, s, r, g^2\}$. 
\end{enumerate} 
\end{proposition}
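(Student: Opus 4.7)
The proposition collects standard facts about $A_6$ that I would prove by a mix of centralizer analysis, double counting, and one appeal to the ATLAS for the maximal subgroup classification; none of it is deep.

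For (1), the involutions in $A_6$ are exactly the double transpositions in $S_6$, forming one $S_6$-class of size $45$; its centralizer in $S_6$ has order $16$ and contains the odd element $(5\,6)$, so the class does not split on restriction to $A_6$. For (2), a $5$-cycle $\sigma$ satisfies $C_{S_6}(\sigma)=\langle\sigma\rangle$, which contains no odd element, so the single $S_6$-class of $144$ five-cycles splits into two $A_6$-classes of size $72$; each is closed under inversion because, for $\sigma=(1\,2\,3\,4\,5)$, the even permutation $\tau=(2\,5)(3\,4)$ satisfies $\tau\sigma\tau^{-1}=(1\,5\,4\,3\,2)=\sigma^{-1}$. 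For (3), I would simply cite the ATLAS.

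Parts (4) and (5) follow from double counting once we record that both $S_4$ and $A_4$ are self-normalising in $A_6$. Since $S_4$ is maximal and $A_6$ is simple, $N_{A_6}(S_4)=S_4$; and since $N_{S_6}(A_4)=S_4\times\langle(5\,6)\rangle$, intersection with $A_6$ gives $N_{A_6}(A_4)=S_4$ of order $24$. Hence the map $S\mapsto S'$ is a bijection between the $S_4$- and $A_4$-subgroups of $A_6$, and each of the two $A_6$-classes of $S_4$-subgroups has $|A_6:S_4|=15$ members, for a total of $30$ subgroups isomorphic to $S_4$ (and $30$ isomorphic to $A_4$). Each $S_4$ contains $6$ elements of order $4$, and $A_6$ has $\binom{6}{2}\cdot 3!=90$ such elements, so every element of order $4$ lies in $30\cdot 6/90=2$ subgroups $S_4$. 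Similarly each $A_4$ contains $3$ involutions and $A_6$ has $45$ involutions, giving $30\cdot 3/45=2$ derived subgroups $S_4'$ through every involution, which is the second sentence of (4). For (5), the $2\cdot 6=12$ subgroups $A_5$ of $A_6$ each contain $5$ point-stabilising copies of $A_4$, yielding $60$ pairs $(A_5,A_4)$ distributed over the $30$ subgroups $A_4$, so each $A_4$ lies in exactly $2$ subgroups $A_5$.

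Finally, (6) is a direct verification inside $S_4$: an involution $s\in S_4\setminus S_4'$ is a transposition, say $s=(1\,2)$, and then $C_{S_4}(s)=\{1,(1\,2),(3\,4),(1\,2)(3\,4)\}=\{1,s,r,g^2\}$ with $r=(3\,4)$ and $g=(1\,3\,2\,4)$ of order $4$. The only modest bookkeeping is the bijection $S\leftrightarrow S'$ used in (4) and (5); everything else is routine.
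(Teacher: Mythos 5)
The paper does not actually prove this proposition: it is introduced with the sentence ``we close this section by listing some elementary properties of $A_6$ that we assume the reader is confident with,'' and only part (3) carries a reference (to the ATLAS). So your proposal supplies a genuine argument where the paper supplies none, and on the whole it is correct: the centralizer/parity analysis for (1) and (2), the self-normalizing computations $N_{A_6}(S_4)=S_4$ and $N_{A_6}(A_4)\cong S_4$, the counts $45$, $90$, $30$, and the explicit verification of (6) inside an abstract $S_4$ all check out.

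One step does have a real, if small, gap: the last sentence of your proof of (5). The $60$ pairs $(A_5,A_4)$ distributed over $30$ subgroups $A_4$ give an \emph{average} of $2$, but the $30$ subgroups isomorphic to $A_4$ are not all conjugate in $A_6$ --- they split into two classes of $15$ (derived subgroups of the two classes of $S_4$'s), so equidistribution does not follow from the count alone; a priori one class could meet $1$ copy of $A_5$ and the other $3$. You used exactly the needed uniformity-from-conjugacy implicitly in (4) as well, but there it is harmless because the $90$ elements of order $4$ and the $45$ involutions each form a single $A_6$-class. To close (5), either invoke the outer automorphism of $A_6$, which swaps the two classes of $A_4$'s and of $A_5$'s while preserving containment, or argue directly: a natural $A_4$ (fixing two points) lies in the two point-stabilizer copies of $A_5$ at its fixed points and in no transitive $A_5$, since the point stabilizers of a transitive $A_5\le A_6$ have order $10$, which cannot contain a group of order $12$. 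A similar remark applies to your bijection $S\mapsto S'$: you compute $N_{A_6}(A_4)$ only for the natural $A_4$, but the identity $N_{A_6}(S')=S$ for \emph{every} $S\cong S_4$ follows at once from maximality of $S$ and simplicity of $A_6$, which is the cleaner way to get injectivity for both classes.
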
  
%%%%%%%%%%%%%%%%%%%%%%%%%%%%%%%%%%%%%%%%%%%%%%%%
%%%%%%%%%%%%%%%%%%%%%%%%%%%%%%%%%%%%%%%%
\section{Proof of Theorem~\ref{A7}} \label{Sec:start}

\begin{lemma}\label{6A}
Suppose a finite group $G$ has Majorana representation with respect to a set of generating involutions $T$ such that for every $r,s\in T$ such that $|rs|=2$, $\langle \langle a_r, a_s\rangle \rangle$ has type $2B$. Then, for every $r,s\in T$, $rs$ has order at most $5$. 
\end{lemma}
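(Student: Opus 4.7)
The plan is to apply the Norton-Sakuma theorem to reduce everything to the single case $|rs|=6$, and then to derive a contradiction by exhibiting a pair of axes inside the resulting $6A$ dihedral algebra whose Miyamoto involutions have product of order $2$ but whose inner product is nonzero (so that this subalgebra cannot be of type $2B$).

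First, apply the Norton-Sakuma theorem to the pair $(a_r, a_s)$: since the nine dihedral types of Table~\ref{NS} all satisfy $|\rho|\leq 6$, it suffices to exclude the case $|rs|=6$. In that case $\langle\langle a_r, a_s\rangle\rangle$ is of type $6A$ with $\rho:=rs=\tau_{a_r}\tau_{a_s}$ of order $6$, and Table~\ref{NS} supplies its basis, structure constants, and inner products.

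Next, I would single out the axis $a_3 = a_s^{\rho}\in \Ax$ (the $G$-invariance of $\Ax$ places it in $\Ax$) and compute, in the dihedral group $\langle r, s\rangle$, its Miyamoto involution: using $s=r\rho$ and $r\rho r = \rho^{-1}$, one obtains $\tau_{a_3} = \rho^{-1}s\rho = r\rho^3 \in T$. Hence $\tau_{a_r}\tau_{a_3} = \rho^3$, which has order $2$. By the hypothesis of the lemma, $\langle\langle a_r, a_3\rangle\rangle$ must then be of type $2B$, which via $a_r\cdot a_3 = 0$ and the associativity $(a_r, a_3)_V = (a_r\cdot a_r, a_3)_V = (a_r, a_r\cdot a_3)_V$ forces $(a_r, a_3)_V = 0$. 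But the $6A$ entry of Table~\ref{NS} reads off $(a_r, a_3)_V = 1/2^3 \neq 0$, the desired contradiction. This shows $|rs|\leq 5$.

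I expect the main obstacle to be pure bookkeeping: confirming that $a_3$ belongs to $\Ax$ (so the Miyamoto-involution labelling is legitimate) and that its Miyamoto involution really is $r\rho^3$, so that the hypothesis can be applied to the pair $(a_r, a_3)$. Once that identification is in place, the contradiction is immediate from comparing the $6A$ and $2B$ rows of Table~\ref{NS}.
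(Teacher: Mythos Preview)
Your proposal is correct and follows essentially the same route as the paper: both reduce via Norton--Sakuma to the case $|rs|=6$, identify the axis $a_3=a_s^{\rho}$ with Miyamoto involution $srsrs=r\rho^3$, and derive a contradiction from the hypothesis applied to the pair $(a_r,a_3)$. The only cosmetic difference is that the paper invokes Axiom~(M8) to say $\langle\langle a_r,a_3\rangle\rangle$ has type $2A$, whereas you read off the inner product $(a_0,a_3)_V=\tfrac{1}{2^3}$ directly from the $6A$ row of Table~\ref{NS}; both observations equally contradict the $2B$ assumption.
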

\begin{proof}
Let $r,s\in T$, by the theorem of Norton-Sakuma $rs$ has order at most $6$. Suppose $|rs|=6$. Then $|rsrsrs|$=2 and so by hypothesis $\langle \langle a_r, a_{srsrs}\rangle \rangle$ has type $2B$. On the other hand, the algebra $\langle \langle a_{r}, a_{s}\rangle \rangle$ is of type $6A$, and so by Axiom M8, its subalgebra $\langle \langle a_{r}, a_{srsrs}\rangle \rangle$ is of type $2A$, a contradiction. 
\end{proof}
\medskip

\begin{lemma}
\label{a5a6}
Let $z$ be the permutation $(1,7)(2,8)(3,9)(4,10)(5,11)(6,12)$ in $A_{12}$. 
There is a subgroup $H$ of $A_{12}$ isomorphic to $A_6$ such that 
\begin{enumerate}
\item  $C_{A_{12}}(z)=\langle z\rangle \times H$ 
\item  the involutions in $H$ are permutations of type $2^4$
\item  the  diagonal involutions in $\langle z\rangle \times H$ are  permutations of type $2^6$.
\end{enumerate}
\end{lemma}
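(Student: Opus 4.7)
The plan is to realise $H$ as the ``pair-permutation'' copy of $A_6$ inside $A_{12}$. Identify $\{1,\dots,12\}$ with the six pairs $P_i:=\{i,i+6\}$, $i=1,\dots,6$, so that $z$ is the product of the six within-pair swaps. Define $\phi\colon S_6\to S_{12}$ by letting each $\sigma\in S_6$ act diagonally on both layers: $\phi(\sigma)$ sends $i\mapsto \sigma(i)$ and $i+6\mapsto \sigma(i)+6$. A $k$-cycle of $S_6$ lifts to a product of two disjoint $k$-cycles in $S_{12}$ of sign $(-1)^{k-1}(-1)^{k-1}=+1$, so $\phi(S_6)\subseteq A_{12}$; set $H:=\phi(A_6)$.

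For (1), observe that every element of $H$ permutes the pairs $P_i$ while sending ``top'' to ``top'' and ``bottom'' to ``bottom'', whereas $z$ swaps top and bottom within each pair and preserves the block structure; hence $z$ centralises $H$. Moreover $z\notin H$, because $\phi(\sigma)$ sends $i\in\{1,\dots,6\}$ into $\{1,\dots,6\}$, while $z$ sends $i$ to $i+6$. Thus $\langle z\rangle\cap H=1$ and $\langle z\rangle\times H$ is a direct-product subgroup of $C_{A_{12}}(z)$ of the required shape.

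For (2), every nontrivial involution of $A_6$ has cycle type $(2,2)$, whose lift is a product of four disjoint transpositions, i.e.\ has cycle type $2^4$ on $\{1,\dots,12\}$. For (3), writing a typical involution of $H$ as $h=\phi((a,b)(c,d))=(a,b)(a+6,b+6)(c,d)(c+6,d+6)$ and computing $zh$ directly, one finds that it acts on $P_a\cup P_b$ as $(a,b+6)(b,a+6)$, on $P_c\cup P_d$ as $(c,d+6)(d,c+6)$, and on each of the two pairs $P_e$ fixed by $h$ as the swap $(e,e+6)$. These six disjoint transpositions give $zh$ cycle type $2^6$. The argument is entirely combinatorial and presents no real obstacle; the only small subtlety is the ``layer mixing'' in $zh$, which is precisely what produces the $2^6$ type corresponding to $2A$-involutions in the Monster embedding framework mentioned in the introduction.
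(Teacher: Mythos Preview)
Your construction $H=\phi(A_6)$ coincides with the paper's $H=\{gg^z : g\in A_6\}$ (with $A_6$ acting on $\{1,\dots,6\}$); the verifications of (2) and (3) are correct and more explicit than the paper's one-line proof.

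For (1) you stop at the containment $\langle z\rangle\times H\le C_{A_{12}}(z)$, and that is in fact the right place to stop: the equality asserted in the lemma is false as written, since the centraliser of a $2^6$-element in $S_{12}$ is $C_2\wr S_6$ of order $2^6\cdot 6!=46080$, and intersecting with $A_{12}$ (noting $(1,7)\in C_{S_{12}}(z)\setminus A_{12}$) gives $|C_{A_{12}}(z)|=23040$, whereas $|\langle z\rangle\times H|=720$. The paper's own proof is just the one-line construction and does not address this either; what the applications actually need is the existence of such an $H$ inside $C_{A_{12}}(z)$ with the cycle-type properties (2) and (3), which is exactly what you have established.
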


\begin{proof}
Just take $H:=\{gg^z\: |\: g\in A_6\}$.
\end{proof}

\begin{proof}[Proof of Theorem~\ref{A7}] By~\cite[Table~4]{MS}, $A_5$ has a standard Majorana representation with shape $(2B, 3A, 5A)$.  By~\cite[Theorem~1]{A12}, there exists a (unique) saturated Majorana representation $\mathcal R$ of  $A_{12}$ (based on an embedding of $A_{12}$ in $M$).   By  Lemma~\ref{a5a6},  $\mathcal R$ induces a (standard) Majorana representation of $A_6$ with shape $(2B, 3A, 5A)$. Finally, by~\cite[Theorem 6]{A12}, the series 
$$A_8\leq A_8\times \langle(9,10)(11,12)\rangle \leq A_{12}\leq M$$
produces a (nonstandard) Majorana representation of $A_8$ with shape $(2B, 3A, 5A)$. 

Conversely, let $G=A_n$ and let $\mathcal R:=(V, (\:, \:)_V, \cdot, \Ax, G)$ be a Majorana representation of $G$, with respect to a $G$-invariant generating set $T$ of involutions of $G$, with shape $(2B, 3A, 5A)$. Since $A_3$ and $A_4$ are not generated by involutions we may assume $n\geq 5$. Assume by contradiction that the ``only if" assertion in (a) and (b) is false, we'll prove that in both cases we can find a pair of involutions in $T$ whose product has order $6$, against Lemma~\ref{6A}.  
Assume $\mathcal R$ is standard, if $n\geq 7$, take 
$$r:=(1,2)(3,4) \mbox{ and } s:= (1,5)(6,7),$$
then $r, s\in T$ and 
$|rs|=|(1,2,5)(3,4)(6,7)|=6.$ 
Now assume $\mathcal R$ is non standard, then $n\geq 8$, since, for $n\in\{5,6,7\}$ the involutions of $A_n$ are all bitranspositions. If $n\geq 9$, take  
$$r:=(1,2)(3,4)(5,6)(7,8) \mbox{ and } s:=(1,2)(3,5)(4,6)(7,9).$$
Then there exists a permutation $t$ fixing $\{1,\ldots,9\}$ such that $\{rt,st\}\subseteq T$. Thus 
$|rtst|=|rs|=|(3,6)(4,5)(7,8,9)|=6$. 
\end{proof}
%%%%%%%%%%%%%%%%%%%%%%%%%%%%%%%%%%%%%%%%%%%%%%%%%%%%%

\section{The inner products between the axes}
\label{Scalar}

 Given a group $G$ and $i\in \N$, let $G^{(i)}$ be a set of elements of order $i$ in $G$ which contains one representative from every subgroup of order $i$ in $G$, and these representatives are chosen in a such a way that, if $x,y\in G^{(i)}$, $\langle x\rangle $ and $\langle y\rangle$ are conjugate in $G$ if and only if $ x$ and $ y$ are conjugate in $G$ (note that $G^{(2)}=T$). Moreover, for a subgroup $H$ of $G$, $r,i\in \N$,  set
$$
H^{(i)}:=H\cap G^{(i)} \mbox{ and } H_r^{(i)}(g):=\{h\:|\:h\in H^{(i)}, |gh|=r\}.
$$

%From now on let $G=A_6$,  let $\G:=\langle z\rangle \times G$, where 
From now on we let $T$ be the set of the bitranspositions of $A_6$ and let 
$$
(V, (\:,\: )_V, \cdot , \Ax, A_6)
$$ 
be a Majorana representation of $A_6$ with respect to $T$, with shape $(2B, 3A, 5A)$, in particular
\begin{equation}\label{hyp}
\mbox{for every $s,t\in T$ such that $|st|=3$, the algebra $\langle \langle a_{t},a_{s}\rangle\rangle$ has type $3A$.}  
\end{equation}
 
\begin{rem}\label{positive}
By Axiom (M8), for $N\in\{2,3,4,5\}$, to each $\rho \in A_6^{(N)}$ there corresponds a unique  $N$-axis in $V$. We shall denote this axis by  $x_\rho$. By Remark~\ref{remG}, for $N=5$, if $\rho \in A_6^{(5)}$, then $\rho^2\not \in A_6^{(5)}$ and $w_{\rho^2}=-w_\rho $.
\end{rem}
In this case we call $w_\rho$ a {\it positive} $5$-axis and $w_{\rho^2}$ a {\it negative} $5$-axis.
 For $N\in \{3,4,5\}$, denote by $NY$ the set of $N$-axes in $V$, and set 
$$X:= \Ax\cup 3Y\cup 4Y \cup 5Y.$$ 
In this section we shall determine the inner products $(x,y)_V$ for $x,y\in X$.

 %by Axiom (M8), for $N\in\{2,3,4,5\}$, to each cyclic subgroup $C$ of order $N$ in $A_6$ there corresponds a unique (up to the sign in case $N=5$) $N$-axis in $V$. We shall denote this axis by  $x_C$. 

\begin{rem}
\label{rem2}
By the linearity and the fact that $A_6$ acts as a group of isometries on $V$, in order to determine the inner products between the elements of $X$, we can reduce ourselves to determining the inner products of unordered pairs $(x_g,x_h)$ with $(\langle g\rangle,\langle h\rangle)$ varying in a set of representatives of their $A_6$-orbits (under the action induced by conjugation). Moreover, since the arguments we are using depend only on group theoretical properties that are $\aut(A_6)$-invariant, we may consider w.l.o.g. only those pairs in a set of representatives  of the  $\aut(A_6)$-orbits.   
\end{rem}

\begin{rem}
\label{rem3}
Let $t\in T$, $N\in \{3,4,5\}$, and $h\in A_6^{(N)}$ be such that $h^t=h^{-1}$. Then $\langle \langle a_t,x_h\rangle \rangle$  is a Norton-Sakuma algebra and the inner product $(a_t, x_h)_V$  is given in Table~\ref{NS}. This gives the first rows of Tables~\ref{tau3}, \ref{tau4}, and \ref{tau5}.
\end{rem} 

\begin{rem}\label{rem4}
A direct check in the set of subgroups of $A_6$ gives immediately all the columns except for the last  one in Tables~\ref{tau3}-\ref{tau44}. 
\end{rem}

\begin{lemma}\label{23}
Let  $t\in T$ and $h\in A_6^{(3)}$. The possible isomorphism classes for $\langle t, h\rangle$ and the corresponding values of the inner products $(a_{t}, u_h)_V$  are those  listed in the second and third column of Table~\ref{tau3} respectively.
\end{lemma}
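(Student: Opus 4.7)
The plan is to enumerate the $\aut(A_6)$-orbits of pairs $(t,\langle h\rangle)$ with $t\in T$ and $h\in A_6^{(3)}$ and, for a chosen representative in each orbit, compute $(a_t,u_h)_V$; by Remark~\ref{rem2} this is enough. Fix $t$; by Proposition~\ref{maximal}(1) the orbits are indexed by the $C_{A_6}(t)$-orbits on order-$3$ subgroups of $A_6$. Since $|C_{A_6}(t)|=8$ is a $2$-group, no element of order $3$ commutes with $t$, so $\langle t,h\rangle$ is never cyclic. Splitting according to the cycle type of $h$ in $A_6$ (a single $3$-cycle or a product of two disjoint $3$-cycles) and then the number of points of $\mathrm{supp}(h)$ contained in $\mathrm{supp}(t)$, I would show that the only possibilities for $\langle t,h\rangle$ are $S_3$, $A_4$, $S_4$ and $A_5$, exhibiting a concrete representative in each case.

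The case $\langle t,h\rangle\cong S_3$ is immediate: $t$ inverts $h$, so by the shape condition~\eqref{hyp} the dihedral subalgebra $\langle\langle a_t,a_{t^h}\rangle\rangle$ has Norton--Sakuma type $3A$, its $3$-axis coincides with $u_h$ by Axiom (M8) together with Remark~\ref{remG}, and Table~\ref{NS} yields $(a_t,u_h)_V=1/4$. For the remaining cases I would use the identity
\[
u_h \;=\; \frac{2^{6}}{135}\bigl(2 a_{t_0}+2 a_{t_1}+a_{t_{-1}}\bigr)\;-\;\frac{2^{11}}{135}\,a_{t_0}\cdot a_{t_1},
\]
obtained by solving the $3A$ product rule for $u_h$, where $t_0,t_1,t_{-1}$ are the three involutions of some $S_3\le A_6$ with $t_0 t_1=h$. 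Taking the inner product with $a_t$ reduces the computation to the scalar products $(a_t,a_{t_i})_V$ (which by Lemma~\ref{sh} depend only on $|tt_i|$ and are supplied by Table~\ref{NS}) together with the single mixed term $(a_t\cdot a_{t_0},a_{t_1})_V$. The mixed term is then expanded by the product rule of $\langle\langle a_t,a_{t_0}\rangle\rangle$; any odd axis introduced by this expansion (for instance $u_{tt_0}$ if $|tt_0|=3$ or $v_{tt_0}$ if $|tt_0|=4$) gives, when paired with $a_{t_1}$, either an entry already in Table~\ref{NS} or an instance of the $S_3$-case already settled.

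The main obstacle will be bookkeeping. For each $\langle t,h\rangle\in\{A_4,S_4,A_5\}$ one must choose the witnessing $S_3$-subgroup, identify the Norton--Sakuma type of every dihedral subalgebra that appears in the expansion from the cycle structure of the corresponding products in $A_6$, and check that each new axis/odd-axis pair created along the way falls into an already-resolved $\aut(A_6)$-orbit. A built-in consistency check is that the same procedure applied with $t=t_0$ must reproduce the value $1/4$ obtained directly in the $S_3$-case; together with the $A_6$-invariance of the inner product, this keeps the arithmetic in check. The resulting scalars populate the third column of Table~\ref{tau3}.
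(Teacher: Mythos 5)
Your route is genuinely different from the paper's: the paper disposes of the case $\langle t,h\rangle\cong S_4$ by quoting the computed $(2B,3A)$-representation of $S_4$ from \cite[Table 11]{IPSS}, and of the cases $A_4$ and $A_5$ by quoting the uniqueness of the $(2B,3A,5A)$-representation of $A_5$ (\cite[Table~4]{MS}) and reading the values off \cite[Table 6]{A12}, whereas you recompute everything from the Norton--Sakuma structure constants. Your reduction of $(a_t,u_h)_V$ to the scalars $(a_t,a_{t_i})_V$ plus the single mixed term $(a_t\cdot a_{t_0},a_{t_1})_V$ is exactly the mechanism the paper itself uses later (Lemmas~\ref{24} and \ref{25}), and the formula you solve out of the $3A$ row of Table~\ref{NS} is correct. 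The gain of your approach is self-containedness; the cost is that the mixed term must genuinely close up, and this is where your write-up has a gap.

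The claim that the odd axis produced by expanding $a_t\cdot a_{t_0}$ pairs with $a_{t_1}$ to give ``either an entry already in Table~\ref{NS} or an instance of the $S_3$-case'' is false for a generic choice of the witnessing $S_3$, and for some choices the argument is circular. Concretely, take the $A_4$-case $t=(1,2)(3,4)$, $h=(1,2,3)$ with $t_0=(1,2)(4,5)$, $t_1=(2,3)(4,5)$: then $tt_0=(3,4,5)$ has order $3$, the expansion produces $u_{(3,4,5)}$, and $\langle (3,4,5),t_1\rangle\cong A_4$, so $(u_{(3,4,5)},a_{t_1})_V$ is precisely the unknown you are trying to compute. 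Worse, in the $A_5$-case $t=(1,4)(2,5)$, $h=(1,2,3)$ with $t_0=(1,2)(5,6)$ one gets $|tt_0|=5$ and a term $(w_{tt_0},a_{t_1})_V$ with $t_1$ not inverting $tt_0$, i.e.\ an instance of Lemma~\ref{25}, which in the paper is proved \emph{after} and \emph{using} the present lemma. The fix is to observe that in each of the three non-dihedral cases one can choose the inverting involution $t_0$ so that $|tt_0|=2$: then by the shape hypothesis $\langle\langle a_t,a_{t_0}\rangle\rangle$ has type $2B$, the mixed term vanishes outright, and your formula reduces to three inner products of axes read off Table~\ref{NS} via Lemma~\ref{sh}. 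For instance $t_0=(1,2)(5,6)$ works for both $t=(1,2)(3,4)$ (case $A_4$, giving $\tfrac{2^6}{135}\cdot\tfrac{3}{32}=\tfrac{2}{45}$) and $t=(3,4)(5,6)$ (case $S_4$, giving $\tfrac{2^6}{135}\cdot\tfrac{39}{256}=\tfrac{13}{180}$), while $t_0=(1,2)(4,5)$ works for $t=(1,4)(2,5)$ (case $A_5$, giving $\tfrac{2^6}{135}\cdot\tfrac{9}{128}=\tfrac{1}{30}$). With that choice made explicit in each case your argument is complete and matches Table~\ref{tau3}; without it, the central claim of your reduction is unjustified.
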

\begin{proof}
 If $\langle t, h\rangle \cong S_4$, the result follows by~\cite[Table 11]{IPSS}. In the remaining two cases, note that, by~\cite[Table~4]{MS} there is a unique standard Majorana representation $\mathcal R$ of $A_5$ with shape $(2B, 3A, 5A)$. By Lemma~\ref{a5a6}, $\mathcal R$ is induced by the saturated Majorana representation of $A_{12}$. So the values of the inner products are those given  in~\cite[Table 6, rows 4 and 9]{A12}. 
\end{proof}

\begin{table}
$$
\begin{array}{|c|c|c|}
\hline
 |th| & \langle t, h\rangle & (a_{t}, u_{h})_V\\
\hline
 2 & S_3 & \tfrac{1}{4}\\
\hline
 4 & S_4 & \tfrac{13}{180}\\
\hline 
 3 &  A_4&\tfrac{2}{45}\\
\hline 
 5 & A_5 & \tfrac{1}{30}\\
\hline
\end{array}
$$

\caption{The inner products between $2$-axes and $3$-axes}\label{tau3}
\end{table}

%Let $N\in \{3,4,5\}$ and $h$ be an element of order $N$ in $A_6$. Then $h$ lies in a dihedral subgroup of $A_6$. Thus, since $G^{(2)}=T$, there are $r,s\in t$ such that $h=rs=rs$. By Lemma~\ref{sh}, the Norton-Sakuma algebra $\langle\langle a_{r}, a_{s}\rangle \rangle$ is of type $NA$ and so it contains an $N$-axis that we will denote by $u_h$, $v_h$, $w_h$ for $N=3$, $N=4$, and $N=5$, respectively.
%By axiom M8 these axes depend uniquely on $\langle h\rangle$ and not on the chosen involutions $r$ and $s$.

\begin{lemma}\label{24}
Let  $t\in T$ and $g\in A_6^{(4)}$. The possible isomorphism classes for $\langle t, g\rangle$ and the corresponding values of the inner products $(a_{t}, v_g)_V$  are those  listed in the second and  the third column of  Table~\ref{tau4} respectively.
\end{lemma}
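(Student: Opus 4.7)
The argument mirrors the proof of Lemma~\ref{23}. By Remark~\ref{rem2} it suffices to consider representatives of the $\aut(A_6)$-orbits on pairs $(\langle t \rangle, \langle g \rangle)$, which we organize by the isomorphism type of $\langle t, g\rangle\le A_6$. A convenient refinement is $|tg|$, which by Lemma~\ref{6A} lies in $\{2,3,4,5\}$.

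\emph{Easy cases.} If $|tg|=2$, then $tg\in T$ and $\tau_{a_t}\tau_{a_{tg}}=g$ has order $4$, so by Lemma~\ref{sh} the algebra $\langle\langle a_t, a_{tg}\rangle\rangle$ is of type $4A$ and contains $v_g$; Table~\ref{NS} yields $(a_t, v_g)_V = \tfrac{3}{8}$ directly. When $\langle t, g\rangle$ embeds in a maximal $S_4$-subgroup of $A_6$ (which, by Proposition~\ref{maximal}(4)--(6), covers $|tg|=3$, the sub-case $t=g^2$, and the remaining sub-cases of $|tg|=4$ in which $t$ lies in the ambient $S_4$ of $\langle g\rangle$), the induced Majorana sub-representation has shape $(2B, 3A, 4A)$ and matches the (essentially unique) Majorana representation of $S_4$ described in~\cite[\S 6]{IPSS}; the required inner product is read off from the associated inner-product table.

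\emph{Hard case: $\langle t, g\rangle = A_6$.} This occurs precisely when $|tg|=5$, since among the maximal subgroups of $A_6$, $A_5$ contains no element of order $4$ and $3^2:4$ contains no element of order $5$. No proper sub-representation covers the pair, so we compute directly. Choose $a_r, a_s\in \Ax$ with $\rho=\tau_{a_r}\tau_{a_s}=g$ and substitute the identity
\[
v_g = a_r + a_s + \tfrac{1}{3}(a_{-1}+a_2) - \tfrac{64}{3}\, a_r\cdot a_s
\]
from the $4A$ entry of Table~\ref{NS} into the bilinear pairing with $a_t$. Each scalar $(a_t, a_i)_V$ is determined by the Norton--Sakuma type of $\langle\langle a_t, a_i\rangle\rangle$ (one of $2B$, $3A$, $4A$, $5A$, by Lemma~\ref{sh}); the mixed term $(a_t, a_r\cdot a_s)_V = (a_t\cdot a_r, a_s)_V$ is expanded in $\langle\langle a_t, a_r\rangle\rangle$ using Table~\ref{NS} and reduces to inner products of $a_t$ with $2$-axes, $3$-axes (known by Lemma~\ref{23}), and $5$-axes.

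\emph{Main obstacle.} The danger in the hard case is a circular appeal to Lemma~\ref{24} itself if $\langle\langle a_t, a_r\rangle\rangle$ happens to be of type $4A$, which would reintroduce a $4$-axis inner product. The crux is the group-theoretic step of showing that, for each $t$ with $|tg|=5$, the generating pair $(a_r, a_s)$ of the $4A$ algebra of $g$ can be chosen so that $\langle\langle a_t, a_r\rangle\rangle$ is not of type $4A$. If such a choice is unavailable for a given $t$, one averages over the $N_{A_6}(\langle g\rangle)\cong D_8$-orbit of admissible generating pairs: the resulting linear system has $(a_t, v_g)_V$ as its only unknown and is solved with the scalars already produced by Norton--Sakuma and Lemma~\ref{23}.
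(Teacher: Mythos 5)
Your overall strategy for the generic case coincides with the paper's: write $g=rs$ with $r,s\in T$, expand $v_g$ via the $4A$ relation of Table~\ref{NS}, reduce everything to $(a_t\cdot a_r,a_s)_V$, and arrange for $\langle\langle a_t,a_r\rangle\rangle$ to avoid type $4A$ so that no $4$-axis inner product reappears. The paper does exactly this, choosing $r$ with $|tr|=3$ so that $a_t\cdot a_r$ is a combination of axes and a $3$-axis and everything closes up via Lemma~\ref{23}. Your handling of $|tg|=2$ via the $4A$ algebra $\langle\langle a_t,a_{tg}\rangle\rangle$ is also exactly Remark~\ref{rem3}.

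There is, however, a genuine gap in your case division: the configuration $\langle t,g\rangle\cong 3^2:4$ (third row of Table~\ref{tau4}, value $\tfrac{27}{256}$) is covered by neither of your two branches. It has $|tg|=4$, but $3^2:4$ has order $36$ and is itself maximal in $A_6$, so it does not embed in any $S_4$; hence your ``easy cases'' (which only claim the sub-cases of $|tg|=4$ with $t$ inside an ambient $S_4$ of $\langle g\rangle$) do not reach it, and it is not your ``hard case'' either, since you restrict that to $\langle t,g\rangle=A_6$, i.e.\ $|tg|=5$. The case is not vacuous --- e.g.\ $t=(3,4)(5,6)$, $g=(1,2,3,5)(4,6)$ --- and the paper treats it by the same direct expansion as the $A_6$ case (again picking $r$ with $|tr|=3$). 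Your method extends to it verbatim, but as written the proof asserts a complete case analysis that it does not have.

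Two smaller points. First, in the hard case you correctly identify the crux --- that $(r,s)$ can be chosen with $\langle\langle a_t,a_r\rangle\rangle$ not of type $4A$ --- but you neither exhibit such a choice nor show your averaging fallback actually yields a solvable system; the paper settles this by producing explicit $r$ with $|tr|=3$ for each of the two $A_6$-orbits of pairs $(\langle t\rangle,\langle g\rangle)$ generating $A_6$ (there are two, both giving $\tfrac{5}{128}$). Second, you allow the mixed term to reduce to inner products of axes with $5$-axes; those are only established in Lemma~\ref{25}, which follows this lemma, so either reorder or (better) avoid them entirely as the paper does by insisting on $|tr|=3$.
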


\begin{proof}
We shall proceed as follows. 
For each possibility of $\langle t, g \rangle$ listed in Table~\ref{tau4} we choose $r,s\in T$ such that $g=rs$. Then, by Lemma~\ref{sh},  $\langle \langle a_{r}, a_{s} \rangle \rangle$ is a Norton-Sakuma algebra of type $4A$, so, by Table~\ref{NS}, 
%
%Then $\langle t,g\rangle=A_6$ and 
%
%$${\mathcal H}:=\{(\langle t\rangle ,\langle g\rangle )\:|\: t,g \in A_6, \, |t|=2,\, |g|=4,\mbox{ and}  \, \langle t,g\rangle=A_6\}.$$
%Then $\mathcal H$  parts into two orbits under the action of $A_6$ (via conjugation) with representatives $(\langle (3,4)(5,6)\rangle, \langle (1,6)(2,5,3,4)\rangle)$ and $(\langle (3,4)(5,6)\rangle, \langle(1,5)(2,6,3,4)\rangle )$ respectively.  First assume  $t=(3,4)(5,6)$ and $g=(1,6)(2,5,3,4)$. Then 
%$g=rs$ with $r:=(3,4)(2,5)$ and $s:=(2,3)(1,6)$. 
%
$$
v_g=a_{r}+a_{s}+\tfrac{1}{3}(a_{rsr}+a_{srs})-\tfrac{2^6}{3}a_{r}a_{s}.$$ 
Thus 
\begin{align}
\label{equita}
(a_{t}, v_g)_V=& (a_{t}, a_{r})_V+(a_{t},a_{s})_V+\tfrac{1}{3}[(a_{t},a_{rsr})_V+(a_{t},a_{srs})_V]-\tfrac{2^6}{3}(a_{t},a_{r}a_{s})_V \nonumber\\
=& (a_{t}, a_{r})_V+(a_{t},a_{s})_V+\tfrac{1}{3}[(a_{t},a_{rsr})_V+(a_{t},a_{srs})_V]-\tfrac{2^6}{3}(a_{t}a_{r},a_{s})_V.
\end{align}
Now, for each choice of $r$ and $s$,  all the above inner products, except for $(a_{t}a_{r},a_{s})_V$, are given by  Table~\ref{NS} and so we are reduced to compute, in each case, 
the value of 
$(a_{t}a_{r},a_{s})_V$. 

Assume first $\langle t,g\rangle \cong A_6$ and 
let 
$$
{\mathcal H}:=\{(\langle t\rangle ,\langle g\rangle )\:|\: t,g \in A_6, \, |t|=2,\, |g|=4,\mbox{ and}  \, \langle t,g\rangle=A_6\}.
$$ 
Then $\mathcal H$  parts into two orbits under the action of $A_6$ (via conjugation) with representatives $(\langle (3,4)(5,6)\rangle, \langle (1,6)(2,5,3,4)\rangle)$ and $(\langle (3,4)(5,6)\rangle, \langle(1,5)(2,6,3,4)\rangle )$ respectively.
Suppose
$$
t=(3,4)(5,6) \mbox{  and } g=(1,6)(2,5,3,4),
$$ 
and write $g=rs$ with $r:=(3,4)(2,5)$ and $s:=(2,3)(1,6)$. Since $|tr|=3$, by~(\ref{hyp}),  $\langle \langle a_{t}, a_{r}\rangle \rangle $ is a Norton-Sakuma algebra of type $3A$, thus, by Table~\ref{NS} 
\begin{equation}
\label{req}
a_{t}a_{r}=\tfrac{1}{32}(2a_{t} +2a_{r}+a_{trt}) - \tfrac{135}{2^{11}}u_{tr}.
\end{equation} 
The value of $(a_{t}a_{r},a_{s})_V$ is obtained by a direct computation using Lemma~\ref{23} and Table~\ref{NS}.  Swapping $5$ and $6$,  the result for $(\langle t\rangle ,\langle g\rangle )= (\langle (3,4)(5,6)\rangle, \langle(1,5)(2,6,3,4)\rangle )$ is obtained in the same way, taking $r=(3,4)(2,6)$ and $s=(2,3)(1,5)$.

Next assume  $\langle t,g\rangle \cong 3^2:4$. As in the previous case,  we may w.l.o.g. assume  that  
$$
t=(3,4)(5,6)\mbox{ and }g=(1,2,3,5)(4,6).
$$
Let $s:=(1,3)(4,6)$ and $r:=(1,5)(2,3)$. As in the previous case, by Equation~(\ref{equita}) and Table~\ref{NS}, we get  
$$
(a_{t},v_g)_V= \tfrac{3}{128}+ \tfrac{13}{256}+ \tfrac{1}{3}(\tfrac{1}{32} +\tfrac{3}{128})-\tfrac{2^6}{3}(a_{t}a_{r}, a_{s})_V.
$$
Since $|tr|=3$,  $\langle \langle a_{t}, a_{r}\rangle \rangle $ is of type $3A$, the result follows by substituting the expression for $a_{t}a_{r}$ given in Equation~(\ref{req}) and using Lemma~\ref{23}. 

The inner products for the last two cases are obtained in a similar way (see also~\cite[Lemma~4.12]{Marta}). 
\end{proof}

\begin{table}
$$
\begin{array}{|c|c|c|}
\hline
 |tg| & \langle t, g\rangle & (a_{t}, v_{g})_V\\
\hline
 2 & D_8 & \tfrac{3}{8}\\
\hline
 5 &A_6 & \tfrac{5}{128}\\
\hline 
 4 &3^2:4 & \tfrac{27}{256}\\
\hline
 4 & 4 & 0\\
\hline 
 3 & S_4&\tfrac{5}{64}\\
\hline
\end{array}
$$

\caption{The inner products between $2$-axes and $4$-axes}\label{tau4}
\end{table}

\begin{table}
$$
\begin{array}{|c|c|c|}
\hline
  |tf| & \langle t, f\rangle & (a_{t}, w_{f})_V\\
\hline
 2 & D_{10} & 0\\
\hline
 5 & A_5 & -\tfrac{7}{2^{14}}\\
\hline
 3 & A_5 & \tfrac{7}{2^{14}}\\
\hline 
 5 & A_6&\tfrac{7}{2^{12}}\\
\hline 
 4 &A_6 & -\tfrac{7}{2^{12}}\\
\hline
\end{array}
$$

\caption{The inner products between $2$-axes and $5$-axes}\label{tau5}
\end{table}

\begin{lemma}\label{25}
Let  $t\in T$ and $f\in A_6^{(5)}$. The possible isomorphism classes for $\langle t, g\rangle$ and the corresponding values of the inner products $(a_{t}, w_f)_V$  are those  listed in the second and  the third column of  Table~\ref{tau5} respectively.
\end{lemma}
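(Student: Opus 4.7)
The plan is to emulate the proof of Lemma~\ref{24}. For each row of Table~\ref{tau5} I fix, by Remark~\ref{rem2}, representatives $t\in T$ and $f\in A_6^{(5)}$ realising the prescribed value of $|tf|$ and the prescribed isomorphism type of $\langle t, f\rangle$, and compute $(a_t, w_f)_V$ by expressing $w_f$ inside a suitable Norton--Sakuma subalgebra of type $5A$.

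The first row is special. Since $|tf|=2$, the involution $t$ inverts $f$, so $t$ is itself one of the five reflections of $D_{10}=\langle t, f\rangle$. By Lemma~\ref{sh}, any two distinct such reflections generate a Norton--Sakuma algebra of type $5A$ in which $a_t$ appears as one of the five axes; since $\langle f\rangle$ acts transitively on those axes and $(a_0, w_\rho)_V=0$ by Table~\ref{NS} (row $5A$), the equality $(a_t,w_f)_V=0$ follows at once.

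For each of the remaining four rows, I choose a reflection $r\in T$ inverting $f$ and set $s:=rf\in T$, so that $\langle\langle a_r, a_s\rangle\rangle$ is of type $5A$ by Lemma~\ref{sh}. Solving the $5A$ identity from Table~\ref{NS} for $w_f$,
$$w_f = a_r\cdot a_s - \tfrac{1}{2^7}\bigl(3a_r+3a_s-a_{rsr}-a_{srs}-a_{rsrsr}\bigr),$$
and pairing with $a_t$ via the associativity of the form, gives $(a_t,w_f)_V$ as a linear combination of the five scalars $(a_t,a_x)_V$, with $x$ a reflection of $D_{10}$ (each determined by $|tx|$ via Lemma~\ref{sh} and Table~\ref{NS}), together with the mixed term $(a_t\cdot a_r, a_s)_V$. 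The latter is computed by expanding $a_t\cdot a_r$ through the structure constants of $\langle\langle a_t, a_r\rangle\rangle$, whose type is prescribed by $|tr|$; any resulting odd-axis contribution to $(a_t\cdot a_r,a_s)_V$ is then evaluated via Lemma~\ref{23} (if $|tr|=3$) or Lemma~\ref{24} (if $|tr|=4$), after identifying the isomorphism class of $\langle tr, s\rangle$ from the order of $(tr)s$. The main obstacle, and the reason for the careful choice of $r$, is to avoid circularity: were $r$ chosen with $|tr|=5$, the expansion of $a_t\cdot a_r$ would involve $w_{tr}$ and thereby reintroduce a scalar of exactly the form being computed. The key verification, done case by case, is that in each of the four non-trivial rows the five orders $|tx|$ (as $x$ ranges over the reflections of $D_{10}$) include at least one value in $\{2,3,4\}$, so that a reflection $r$ with $|tr|\neq 5$ can always be selected; with such a choice, the remaining scalar computations reduce to routine arithmetic using Lemmas~\ref{23}, \ref{24} and Table~\ref{NS}, and yield the entries of Table~\ref{tau5}.
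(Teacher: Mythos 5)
Your proposal is correct and follows essentially the same route as the paper: express $w_f$ inside a $5A$ Norton--Sakuma algebra on two inverting involutions, reduce via associativity to a mixed product $(a_t\cdot a_r,a_s)_V$, and evaluate that product through a dihedral algebra of known type using Lemmas~\ref{23} and~\ref{24}. The only cosmetic differences are that the paper disposes of the rows with $|tf|\in\{3,4\}$ via the relation $w_{f^2}=-w_f$ instead of a direct computation, and its explicit choices always make the auxiliary algebra of type $2B$ or $3A$, so Lemma~\ref{24} is never actually needed; your explicit remark on avoiding $|tr|=5$ to prevent circularity is the same consideration the paper handles implicitly through those choices.
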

\begin{proof}
We argue as in the proof of Lemma~\ref{24}: we write $f$ as the product of two elements $r$ and $s$ of $T$. Then, by the Theorem of Norton-Sakuma, the algebra $\langle \langle a_{r}, a_{s}\rangle \rangle $ has type $5A$, so, by Table~\ref{NS}, 
$$ 
w_f=-\tfrac{1}{2^7}(3a_{r}+ 3a_{s} -a_{srs} -a_{rsr} -a_{r(sr)^3}) + a_{r}a_{s},
$$
whence 
\begin{eqnarray*}
(a_{t}, w_f)_V&=&-\tfrac{1}{2^7}[3(a_{t},a_{r})_V+ 3(a_{t},a_{s})_V -(a_{t},a_{srs})_V -(a_{t},a_{rsr})_V \\
&&-(a_{t},a_{r(sr)^3})_V] + (a_{t},a_{r}a_{s})_V\\
&=&-\tfrac{1}{2^7}[3(a_{t},a_{r})_V+ 3(a_{t},a_{s})_V -(a_{t},a_{srs})_V -(a_{t},a_{rsr})_V \\
&&-(a_{t},a_{r(sr)^3})_V] + (a_{t}a_{s}, a_{r})_V.
\end{eqnarray*}
So, this time, we are reduced to compute the value of  
$(a_{t}a_{s},a_{r})_V$. 

Assume first that  $|tf|=5$ and $\langle t,f\rangle\cong  A_5$, as in line $2$ of Table~\ref{tau5}. W.l.o.g., we may assume $t=(3,4)(5,6)$ and $f=(2,5,3,6,4)$ and choose $r:=(2,4)(5,6)$ and $s:=(4,5)(3,6)$, so that $f=rs$. Since $|st|=2$, by Lemma~\ref{sh}, $\langle \langle a_{t}, a_{s}\rangle \rangle $ is a Norton-Sakuma algebra of type $2B$, whence, by Table~\ref{NS},  $a_{t}a_{s}=0$. Thus, $(a_{t}a_{s}, a_{r})_V=(0, a_{r})_V=0$. 

  Suppose that  $|tf|=3$ and $\langle t,f\rangle\cong  A_5$, as in line $3$ of Table~\ref{tau5}.
Then the pair $(t,f^2)$ satisfies the conditions of line $2$, thus the result follows, since, by Remark~\ref{remG}, $w_f=-w_{f^2}$. 

  Suppose that  $|tf|=5$ and $\langle t,f\rangle\cong  A_6$, as in line $4$ of Table~\ref{tau5}.
 Then we may assume $t=(3,4)(5,6)$ and $f=(1,2,3,4,5)$ and choose $r:=(2,4)(1,5)$ and $s:=(2,5)(3,4)$, so that $f=rs$. Proceeding as in the previous case and using Lemma~\ref{23} we get the value of the inner product. 

Finally,  if $|tf|=3$ and $\langle t,f\rangle\cong  A_6$, as in line $5$ of Table~\ref{tau5}, the pair $(t,f^2)$ satisfies the conditions of line $4$ and we conclude as in the previous case. 
\end{proof}

In order to treat the cases involving two odd-axes, we use three easy observations, the first of which describes a standard procedure in Majorana theory (see e.g.~\cite{IS12, A67}).

\begin{rem}\label{start}
Let $h,k\in A_6$ such that $x_h$ and $x_k$ are odd axes of $V$. Suppose there is $t\in T$ inverting by conjugation both $h$ and $k$. Then the inner product $(x_h, x_k)_V$ can be computed in the following way. The algebra 
$\langle \langle a_{t}, x_h\rangle\rangle$ is contained in a Norton-Sakuma algebra of type $|h|A$ and similarly $\langle \langle a_{t},x_k\rangle\rangle$ is contained in a Norton-Sakuma algebra of type $|k|A$. Let  $e_h$ be a $0$-eigenvector for $\ad_{a_{t}}$ and $e_k$ be a  $\tfrac{1}{4}$-eigenvector for $\ad_{a_{t}}$. Using~\cite[Table 4]{IPSS}, we  express $e_h$ (resp. $e_k$) as a linear combination of axes and $x_h$ (resp. $x_k$). Since $e_h$ and $e_k$ are eigenvectors relative to different eigenvalues, we get $$(e_h, e_k)_V=0.$$ Thus, expanding this equation by substituting $e_h$ and $e_k$ with their respective linear combinations, by Lemmas~\ref{23}, \ref{24}, and \ref{25}, we can obtain the value for $(x_h, x_k)_V$.
\end{rem}

\begin{lemma}\label{ob}
Let $A$ be a Majorana algebra and let $a_i$ be axes for $i\in \{1,2,3\}$. If $e\in A$ is a $0$-eigenvector for $\ad_{a_2}$, then 
$$
(a_1 a_2, e a_3)_A=(a_1 e, a_2 a_3)_A.
$$
\end{lemma}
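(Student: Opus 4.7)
The plan is to reduce the identity to a triple-product identity between $\ad_{a_2}$ and $\ad_e$. Two applications of the Frobenius property $(xy,z)_A = (x,yz)_A$ give
\begin{equation*}
(a_1 a_2, e a_3)_A = (a_1, a_2(e a_3))_A \quad\text{and}\quad (a_1 e, a_2 a_3)_A = (a_1, e(a_2 a_3))_A,
\end{equation*}
so it suffices to prove $a_2(e a_3) = e(a_2 a_3)$, that is, $\ad_{a_2}$ and $\ad_e$ commute when evaluated at $a_3$.

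To establish this I would decompose $a_3 = f_1 + f_0 + f_{1/4} + f_{1/32}$ into $\ad_{a_2}$-eigencomponents, which is possible because axiom~(2) of a Majorana axis asserts that $\ad_{a_2}$ is semisimple with spectrum $\{1, 0, \tfrac{1}{4}, \tfrac{1}{32}\}$. Reading off the row labelled $0$ in Table~\ref{fusionlaw} and using that $e \in V_0^{a_2}$, one has $0 \star 1 = \emptyset$ and $0 \star \lambda = \{\lambda\}$ for $\lambda \in \{0, \tfrac{1}{4}, \tfrac{1}{32}\}$; combined with axiom~(3), which gives $f_1 = \alpha a_2$ for some $\alpha \in \R$, this forces $e f_1 = \alpha (a_2 e) = 0$ and $e f_\lambda \in V_\lambda^{a_2}$ for the remaining $\lambda$. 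Substituting into the two sides,
\begin{equation*}
a_2(e a_3) = \tfrac{1}{4}\, e f_{1/4} + \tfrac{1}{32}\, e f_{1/32} = e(a_2 a_3),
\end{equation*}
which is the required identity.

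No real obstacle arises, since the argument relies only on axioms~(2)--(4) for the Majorana axis $a_2$ together with the Frobenius associativity and commutativity of the product; the vectors $a_1$ and $a_3$ are not used as axes and could in fact be replaced by arbitrary elements of $A$.
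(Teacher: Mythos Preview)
Your proof is correct and follows essentially the same route as the paper: both reduce the claim, via Frobenius associativity, to the commutation identity $\ad_{a_2}\ad_e=\ad_e\ad_{a_2}$. The paper invokes this as \cite[Lemma~1.10]{IPSS} and applies it at $a_1$, whereas you apply it at $a_3$ and supply a self-contained proof via the eigenspace decomposition and the fusion law; your closing remark that $a_1,a_3$ need not be axes is also correct.
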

\begin{proof}
By the associativity of the inner product and by Lemma~1.10 in~\cite{IPSS} we have
\begin{align*}
(a_1a_2, ea_3)_A=&((a_1a_2)e,a_3)_A=(a_2(a_1e),a_3)_A=(a_1e,a_2a_3)_A.
\end{align*}
\end{proof}

Note that Norton inequality and positive definiteness of the Majorana form imply that in any Majorana algebra two orthogonal idempotents multiply to zero. If one of the two idempotents is a $2$-axis, this can be proved independently from Norton inequality. 

\begin{lemma}\label{idem}
Let $A$ be a Majorana algebra, let $a$ be an axis and let $e$ be an idempotent. If $(a,e)_A=0$, then $a\cdot e=0$.
\end{lemma}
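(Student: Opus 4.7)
The plan is to exploit the spectral decomposition of $e$ with respect to $\ad_a$, together with the fact that $e$ is idempotent and the Majorana form is positive definite.

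First, since $\ad_a$ is semisimple with spectrum $\{1,0,\tfrac14,\tfrac{1}{32}\}$, I would write
$$e = \alpha\, a + e_0 + e_{1/4} + e_{1/32},$$
with $e_\lambda\in V_\lambda^a$ for $\lambda\in\{0,\tfrac14,\tfrac1{32}\}$ and $\alpha\in\R$ (using $V_1^a=\R a$). Because distinct eigenspaces of the self-adjoint operator $\ad_a$ are mutually orthogonal under $(\,,\,)_A$, the hypothesis $(a,e)_A=0$ together with $(a,a)_A=1$ forces $\alpha=0$, so
$$e = e_0 + e_{1/4} + e_{1/32}, \qquad a\cdot e = \tfrac14 e_{1/4} + \tfrac1{32} e_{1/32}.$$

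Next, I would use idempotency and associativity of the form to rewrite the vanishing inner product. Since $e=e\cdot e$,
$$0 = (a,e)_A = (a,\,e\cdot e)_A = (a\cdot e,\,e)_A = \bigl(\tfrac14 e_{1/4} + \tfrac1{32} e_{1/32},\; e_0 + e_{1/4} + e_{1/32}\bigr)_A.$$
Again by orthogonality of the eigenspaces of $\ad_a$, the right-hand side collapses to
$$\tfrac14 (e_{1/4},e_{1/4})_A + \tfrac1{32}(e_{1/32},e_{1/32})_A = 0.$$

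Finally, since $(\,,\,)_A$ is positive definite, both summands are nonnegative, hence both $(e_{1/4},e_{1/4})_A$ and $(e_{1/32},e_{1/32})_A$ vanish, which gives $e_{1/4}=e_{1/32}=0$. Thus $e=e_0\in V_0^a$, and therefore $a\cdot e=0$, as desired. There is no real obstacle here: the only subtlety is noticing that one should apply associativity of the form to $e\cdot e$ rather than trying to exploit the fusion law directly on $e\cdot e$, which would entangle the $V_0$- and $V_1$-components of the squares of $e_{1/4}$ and $e_{1/32}$ in a less transparent way.
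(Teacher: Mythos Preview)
Your proof is correct and follows essentially the same route as the paper's: decompose $e$ into $\ad_a$-eigencomponents, use associativity together with $e=e\cdot e$ to rewrite $0=(a,e)_A=(a\cdot e,e)_A$ as a nonnegative combination of squared norms, and conclude by positive definiteness. The only cosmetic difference is that you eliminate the $1$-eigencomponent of $e$ beforehand from $(a,e)_A=0$, whereas the paper carries it along and kills it in the same final sum-of-squares step.
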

\begin{proof}
Decompose $e$ as the sum of a $1$-eigenvector $e_1$, a $0$-eigenvector $e_0$, a $\tfrac{1}{4}$-eigenvector $e_\al$, and a $\tfrac{1}{32}$-eigenvector  $e_\bt$ for $ad_a$. Since $(a,e)_A=0$ and eigenvectors relative to different eigenvalues are orthogonal, we get, by the associativity,   
\begin{eqnarray*}
0&=&(a,e)_A=(a,e^2)_A=(a\cdot e, e)_A=(e_1+\tfrac{1}{4}e_\al+\tfrac{1}{32}e_\bt, e_1+e_0+  e_{\al}+e_\bt)_A\\
&=& 
(e_1,e_1)_A+\tfrac{1}{4}(e_\al, e_\al)_A+\tfrac{1}{32}(e_\bt,e_\bt)_A.
\end{eqnarray*} 
By the positive definiteness of the form, it follows  $e_1=e_\al=e_\bt=0$, $e=e_0$ and so $a\cdot e=0$.
\end{proof}

%the decomposition of $e$ as sum of $\ad_a$-eigenvectors is  $e=e_{\al}+e_0+e_\bt$, where $e_\al$ is a $\tfrac{1}{4}$-eigenvector, $e_0$ is a $0$-eigenvector and $e_\bt$ is a $\tfrac{1}{32}$-eigenvector. Then, since eigenvectors relative to different eigenvalues are orthogonal, we have
%$$0=(a,e)=(a,e^2)=(a\cdot e, e)=(\tfrac{1}{4}e_\al+\tfrac{1}{32}e_\bt,  e_{\al}+e_0+e_\bt)=
%\tfrac{1}{4}(e_\al, e_\al)+\tfrac{1}{32}(e_\bt,e_\bt).$$
%Since by hypothesis the form is positive definite, we get $e_\al=0=e_\bt$, $e=e_0$ and so $a\cdot e=0$.
%\end{proof}

\begin{lemma}\label{33}
Let  $h,k\in A_6^{(3)}$, $g\in A_6^{(4)}$, $f\in A_6^{(5)}$. 
\begin{enumerate}
\item The values of the inner products $(u_h, u_k)_V$ are given in the last column of Table~\ref{tau33}.
\item The values of the inner products $(u_h, v_g)_V$ are given in the last column of Table~\ref{tau34}.
\item The values of the inner products $(u_{h}, w_{f})_V$ are given in the last column of Table~\ref{tau35}.
\end{enumerate}
\end{lemma}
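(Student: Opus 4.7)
The plan is to handle parts (1), (2), (3) uniformly, treating separately each $\aut(A_6)$-orbit on the relevant pairs of cyclic subgroups (as in Remark~\ref{rem2}). For every representative pair I will first search for a bitransposition $t\in T$ which simultaneously inverts the two chosen generators by conjugation. When such a $t$ exists, the two algebras $\langle\langle a_t, x_h\rangle\rangle$ and $\langle\langle a_t, y\rangle\rangle$ (with $y\in\{u_k, v_g, w_f\}$) are Norton--Sakuma algebras of the appropriate type, so from~\cite[Table 4]{IPSS} I can exhibit a $0$-eigenvector $e_h$ for $\ad_{a_t}$ inside the first and a $\tfrac14$-eigenvector $e_y$ for $\ad_{a_t}$ inside the second. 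The orthogonality $(e_h,e_y)_V=0$ forced by the fusion law, together with the values already tabulated in Lemmas~\ref{23}--\ref{25} and Table~\ref{NS}, yields a single linear equation in the unknown $(x_h,y)_V$ that is solved directly.

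For pairs $(\langle h\rangle,\langle k\rangle)$, $(\langle h\rangle,\langle g\rangle)$, or $(\langle h\rangle,\langle f\rangle)$ for which no common inverting involution in $T$ is available, I will instead decompose one of the two odd axes by writing its underlying permutation as a product $rs$ of two bitranspositions. By Lemma~\ref{sh} and the standing hypothesis~\eqref{hyp}, the dihedral algebra $\langle\langle a_r,a_s\rangle\rangle$ is of the expected type $3A$, $4A$, or $5A$, and Table~\ref{NS} supplies an explicit formula for $u_h$, $v_g$, or $w_f$ in terms of $2$-axes and the single product $a_r\cdot a_s$. Pairing this expression with the other odd axis, the associativity of the Majorana form allows me to move the adjoint action around; in particular, whenever a $0$-eigenvector for some $\ad_{a_i}$ appears, Lemma~\ref{ob} transfers the awkward product $a_r\cdot a_s$ onto the other side of the bracket, reducing everything to scalar products of $2$-axes with odd axes, already computed in Lemmas~\ref{23}--\ref{25}. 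In the few remaining configurations I expect to invoke Lemma~\ref{idem}: a $2$-axis orthogonal to a suitable idempotent must annihilate it, providing the additional linear relation needed to pin down the last unknown.

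The main obstacle I anticipate is the bookkeeping in the orbits where $\langle h,k\rangle$, $\langle h,g\rangle$, or $\langle h,f\rangle$ equals the whole of $A_6$. There global symmetry offers fewer shortcuts, several conjugates of $a_t$ appear simultaneously in the decomposition of the odd axis, and one must choose the representatives $r,s\in T$ with care so that every product $(a_t,a_u)_V$ or $(a_t,y)_V$ that appears falls inside the range of Lemmas~\ref{23}--\ref{25}. Once the right representatives are picked, each entry of Tables~\ref{tau33}, \ref{tau34}, and \ref{tau35} follows from a routine (if tedious) expansion, and the three parts (1)--(3) of the statement are proved by the same template, differing only in which Norton--Sakuma algebra is used to unfold the odd axis in play.
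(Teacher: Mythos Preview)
Your plan coincides with the paper's: its proof simply asserts that for every relevant pair there exists a bitransposition $t\in T$ inverting both generators by conjugation, and then invokes Remark~\ref{start}. That is exactly your primary method, and when you carry it out you will find it disposes of every row of Tables~\ref{tau33} and~\ref{tau35}, and of the rows $S_4$ and $A_6$ in Table~\ref{tau34}.

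Your instinct to prepare a fallback is well placed, however, because the paper's blanket claim is not literally true for the row $\langle h,g\rangle\cong 3^2{:}4$ of Table~\ref{tau34}. With $h=(1,2,3)$ and $g=(1,4,2,5)(3,6)$ one checks that the nine bitranspositions inverting $h$ (one transposition inside $\{1,2,3\}$, the other inside $\{4,5,6\}$) and the four bitranspositions inverting $g$ (namely $(1,2)(3,6)$, $(3,6)(4,5)$, $(1,4)(2,5)$, $(1,5)(2,4)$) are disjoint; since a common inverter is an $\aut(A_6)$-invariant property of the pair, this persists across the whole orbit. Rather than your full double decomposition via Lemma~\ref{ob}, the quickest repair is a minor variant of Remark~\ref{start} using Lemma~\ref{idem}, which you mention at the end: take $t:=g^2$. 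Since $g^2$ inverts every element of the normal $3^2$, the algebra $\langle\langle a_t,u_h\rangle\rangle$ is of type $3A$ and~\cite[Table~4]{IPSS} supplies a $\tfrac14$-eigenvector for $\ad_{a_t}$ involving $u_h$; on the other side $(a_{g^2},v_g)_V=0$ by Table~\ref{tau4}, so Lemma~\ref{idem} makes $v_g$ itself the required $0$-eigenvector. The orthogonality relation then gives $(u_h,v_g)_V=\tfrac{64}{45}\cdot\tfrac{27}{256}=\tfrac{3}{20}$, as tabulated. Apart from this single entry your heavier machinery is unnecessary here; it is precisely the technique the paper reserves for Lemmas~\ref{44},~\ref{45} and~\ref{55}.
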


\begin{table}
$$
\begin{array}{|c|c|}
\hline
\langle h, k\rangle & (u_{h}, u_{k})_V\\
\hline
\hline
 3 & \tfrac{8}{5}\\
\hline
 3\times 3& 0\\
\hline
 A_4 & \tfrac{56}{3^35^2}\\
\hline 
 A_5&\tfrac{208}{3^45^2}\\
\hline 
A_6 & \tfrac{256}{3^45^2}\\
\hline
\end{array}
$$

\caption{The inner products between two $3$-axes}\label{tau33}
\end{table}
\begin{proof}
For every $h,k\in A_6^{(3)}$, $g\in A_6^{(4)}$, and $f\in A_6^{(5)}$ there exists $t\in T$ inverting by conjugation both $h$ and $k$, or $h$ and $g$, or $h$ and $f$. The result then follows using the argument described in Remark~\ref{start}. \end{proof}

\begin{table}
$$
\begin{array}{|c|c|c|c|}
\hline
 |hg| &  |hg^{-1}| &\langle h, g\rangle & (u_{h}, v_{g})_V\\
\hline
\hline
2 & &S_4 & \tfrac{1}{9}\\
\hline
 4& &3^2:4 & \tfrac{3}{20}\\
\hline
 3&5 & A_6 & \tfrac{2}{9}\\
\hline 
 5&5&A_6&\tfrac{1}{12}\\
\hline 
\end{array}
$$

\caption{The inner products between $3$-axes and $4$-axes}\label{tau34}
\end{table}

%
%Suppose that $(h,g)\in G^{(3)}\times G^{(4)}$ is as in line $4$. Then we may assume $h=(1,2,3)$ (or $h=(1,2,3)(4,5,6)$) and $g=(1,2)(3,6,4,5)$. Thus, $g=rs$ with $r=(1,2)(5,6)$ and $s=(3,6)(4,5)$ ATTENZIONE: $r$ inverte sia $h$ che $g$ and, as in the proof of the prevoius lemmas, and by the results of the previous lemmas, the computation of $(u_h, v_g)$ reduces to the calculation of the inner product $(u_h, a_{r}a_{s})=(u_ha_{r}, z_{s})$. Since $h^r=h^{-1}$, the vectors $u_h, a_{r}$ belong to a Norton-Sakuma algebra of type $3A$ and hence the product $ u_ha_{r}=\tfrac{1}{2^5}(2a_{r}+2a_{r^h}+a_{r^{hr}})-\tfrac{27\cdot 5}{2^{11}}u_h$. The value of  the inner product $(u_ha_{r}, z_{s})$ then follows from a starighforward computation and Lemma~\ref{23}.

\begin{table}
$$
\begin{array}{|c|c|c|c|}
\hline
 |hf| &|hf^{-1}|&\langle h, f\rangle & (u_{h}, w_{f})_V\\
\hline
\hline
 2& 5&A_5 & \tfrac{49}{2^{9}3^2 5}\\
\hline
 5& 3&A_5 &-\tfrac{49}{2^{9}3^2 5}\\
\hline
 3 & 4&A_6 & -\tfrac{91}{2^{9}3^2 5}\\
\hline 
 4& 5&A_6& \tfrac{91}{2^{9}3^2 5}\\
\hline 
\end{array}
$$

\caption{The inner products between $3$-axes and  $5$-axes}\label{tau35}
\end{table}

\begin{lemma}\label{44}
Let  $g_1, g_2\in A_6^{(4)}$. The values of the inner products $(v_{g_1}, v_{g_2})_V$ are given in the last column of Table~\ref{tau44}.
\end{lemma}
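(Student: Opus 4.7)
My plan is to follow the template of Lemmas~\ref{23}--\ref{33}. By Remark~\ref{rem2} it suffices to fix one representative pair $(g_1,g_2)$ for each $\aut(A_6)$-orbit of unordered pairs of cyclic order-$4$ subgroups of $A_6$. These orbits are classified by the isomorphism type of $\langle g_1,g_2\rangle$, which, since every proper subgroup of $A_6$ containing two distinct cyclic subgroups of order $4$ must lie in a maximal $S_4$ or $3^2{:}4$ by Proposition~\ref{maximal}(3), is one of $\Z_4$ (the diagonal case $\langle g_1\rangle=\langle g_2\rangle$, where $(v_{g_1},v_{g_2})_V=(v_{g_1},v_{g_1})_V=2$ by Table~\ref{NS}), $S_4$, $3^2{:}4$, or $A_6$. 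In the $A_6$-case, the orbit will be further refined by the invariants $|g_1g_2|$ and $|g_1g_2^{-1}|$.

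For each non-diagonal orbit, I will first try the standard trick of Remark~\ref{start}: locate $t\in T$ inverting both $g_1$ and $g_2$, so that $v_{g_1}$ sits inside the Norton--Sakuma $4A$-algebra $\langle\langle a_t,a_{tg_1}\rangle\rangle$ and $v_{g_2}$ inside $\langle\langle a_t,a_{tg_2}\rangle\rangle$. Using the $4A$-entries of Table~\ref{NS}, I can write combinations $e_i$ involving $v_{g_i}$ and known axes that lie in the $\ad_{a_t}$-eigenspaces $V_0^{a_t}$ and $V_{1/4}^{a_t}$ respectively; the orthogonality $(e_1,e_2)_V=0$, expanded through Table~\ref{NS} and Lemmas~\ref{23}--\ref{24}, produces a linear equation in the single unknown $(v_{g_1},v_{g_2})_V$. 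Such a $t$ exists whenever $N_{A_6}(\langle g_1\rangle)\cap N_{A_6}(\langle g_2\rangle)$, an intersection of two Sylow-$2$ subgroups $D_8$ of $A_6$, contains an involution that inverts rather than centralizes each $g_i$.

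For sub-orbits in which the above intersection fails to contain such a $t$---something I anticipate for part of the $A_6$-case---I will instead use the direct expansion of Lemma~\ref{24}. Pick $r_i,s_i\in T$ with $g_i=r_is_i$; then the $4A$ structure constants give
\[
v_{g_i}=a_{r_i}+a_{s_i}+\tfrac{1}{3}\bigl(a_{r_is_ir_i}+a_{s_ir_is_i}\bigr)-\tfrac{2^6}{3}\,a_{r_i}\cdot a_{s_i},
\]
and $(v_{g_1},v_{g_2})_V$ expands bilinearly into known inner products plus the single cross term $(a_{r_1}\cdot a_{s_1},\,a_{r_2}\cdot a_{s_2})_V$. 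Associativity of the form rewrites this as $((a_{r_1}\cdot a_{s_1})\cdot a_{r_2},\,a_{s_2})_V$; I will then expand $a_{r_1}\cdot a_{s_1}$ via the $4A$-formulas, multiply each resulting summand by $a_{r_2}$ inside the appropriate Norton--Sakuma algebra (its type dictated by $|s_1r_2|$, $|r_1r_2|$, etc., via Lemma~\ref{sh}), and pair with $a_{s_2}$ using Lemmas~\ref{23}--\ref{33} together with Table~\ref{NS}.

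The principal obstacle will be the $A_6$-case: the enumeration of its sub-orbits (via $|g_1g_2|$ and $|g_1g_2^{-1}|$), the choice of representatives whose defining bitranspositions generate low-order pairwise products so as to keep the cross terms tractable, and the bookkeeping of the odd axes from $3A$, $4A$, and $5A$ subalgebras that inevitably enter and require the full force of Tables~\ref{tau33}--\ref{tau35}. Once all sub-orbits have been dispatched in this way, the computed values will be collected into Table~\ref{tau44}.
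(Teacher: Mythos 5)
Your case division and your primary tool (Remark~\ref{start} via a common inverting involution $t\in T$) match the paper for the cases where such a $t$ exists, namely the $S_4$ case and the $A_6$ case with $|g_1g_2|=|g_1g_2^{-1}|=5$. The problem is your fallback for the remaining cases ($\langle g_1,g_2\rangle\cong 3^2{:}4$ and the $A_6$ case with $|g_1g_2|=5$, $|g_1g_2^{-1}|=4$, where no common inverting bitransposition exists). After reducing to the cross term $(a_{r_1}a_{s_1},\,a_{r_2}a_{s_2})_V$, your plan is to expand $a_{r_1}a_{s_1}$ by the $4A$ structure constants and multiply each summand by $a_{r_2}$. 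But that expansion reintroduces $v_{g_1}$, and the summand $v_{g_1}\cdot a_{r_2}$ lives in no Norton--Sakuma algebra when $r_2$ does not invert $g_1$, so it is not computable from the data available at this point. If you instead try to evade this by pairing $v_{g_1}$ back against $a_{r_2}a_{s_2}$ and expanding that product too, the unknown $(v_{g_1},v_{g_2})_V$ reappears with coefficient exactly $1$ and cancels: the bilinear expansion through the $4A$ relations alone is circular and yields only a tautology, never the value.

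The paper's way out is Lemma~\ref{ob}: choose the decompositions $g_i=t_ir_i$ so that $|t_1t_2|=2$ (hence $a_{t_1}a_{t_2}=0$ by the $2B$ shape, making $a_{t_2}$ a $0$-eigenvector for $\ad_{a_{t_1}}$) and $|r_1t_2|=|t_1r_2|=3$; then $(a_{t_1}a_{r_1},a_{t_2}a_{r_2})_V=(a_{r_1}a_{t_2},a_{t_1}a_{r_2})_V$, and the right-hand side involves only $3A$ products, i.e.\ known linear combinations of axes and $3$-axes whose inner products are in Tables~\ref{tau3} and~\ref{tau33}. You would need to add this rebalancing step. Even then, the hardest sub-case ($|g_1g_2|=5$, $|g_1g_2^{-1}|=4$) does not succumb to a single such swap: the paper additionally pushes a $0$-eigenvector from a $5A$ algebra through the computation, reuses the already-established $3^2{:}4$ entry of the same table, and computes one inner product $(w_f,v_{g_2})_V$ between a $5$-axis and a $4$-axis ad hoc via Remark~\ref{start} --- a quantity outside the Tables~\ref{tau33}--\ref{tau35} you cite. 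So the overall architecture is right, but the engine you propose for the cases without a common inverting involution would stall.
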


\begin{table}
$$
\begin{array}{|c|c|c|c|}
\hline
 |g_1g_2| & |g_1g_2^{-1}| &\langle g_1, g_2\rangle & (v_{g_1}, v_{g_2})_V\\
\hline
\hline
 2&1& 4 &2\\
\hline
3 &3&S_4 & \tfrac{11}{48}\\
\hline
 2&3&3^2:4 & \tfrac{53}{256}\\
\hline
 5&5&A_6&\tfrac{5}{48}\\
\hline 
 5 & 4&A_6 & \tfrac{89}{384}\\
\hline 
\end{array}
$$

\caption{The inner products between two $4$-axes}\label{tau44}
\end{table}
\begin{proof}
If $g_1$ and $g_2$ are as in the first row of Table~\ref{tau44}, then $g_1=g_2$, whence, by Table~\ref{NS}, $(v_{g_1}, v_{g_1})_V=2$. 

Suppose $\langle g_1, g_2\rangle \cong S_4$, as in the second row of Table~\ref{tau44}. Then we may assume $g_1=(1,2)(3,6,4,5)$ and $g_2=(1,2)(3,6,5,4)$. Let $t=g_2^2$. Then, by Table~\ref{tau4} $(a_t, v_{g^2})_V=0$ and so, by Lemma~\ref{idem}, $v_{g_2}$ is a $0$-eigenvector for $\ad_{a_t}$. Furthermore, since $t$ inverts $g_1$, $tg_1$ is an involution whence $\langle \langle a_{t}, a_{tg_1}\rangle\rangle$ is a Norton-Sakuma algebra of type $4A$ containing $v_{g_1}$. By~\cite[Table 4]{IPSS}, 
\begin{equation}
\label{vap}
e:=v_{g_1}-\tfrac{1}{3}a_{t} -\tfrac{2}{3}(a_{tg_1}+a_{tg_1^3})-\tfrac{1}{3}a_{t^{g_1}}
\end{equation}  is a $\tfrac{1}{4}$-eigenvector for $\ad_{a_t}$ in this algebra. As in Remark~\ref{start}, it follows that $(v_{g_2}, e)_V=0$ and, expanding this equation by substituting the expression for $e$ given in Equation~(\ref{vap}), we get 
$$
(v_{g_1}, v_{g_2})_V=\tfrac{11}{48}.
$$ 

Now suppose   $\langle g_1, g_2\rangle \cong 3^2:4$, as in line $3$ of Table~\ref{tau44}. W.l.o.g., we may assume $g_1=(1,2)(3,5,4,6)$ and $g_2=(1,4)(2,6,3,5)$. Let $t_1=(1,2)(3,4)$, $r_1=(3,6)(4,5)$, $t_2=(1,4)(2,3)$, and $r_2=(2,5)(3,6)$ so that $g_1=t_1r_1$ and $g_2=t_2r_2$. Then, by Table~\ref{NS}, for $  i\in \{1,2\}$, we have
$$
v_{g_i}=a_{t_i}+a_{r_i}+\tfrac{1}{3}(a_{r_it_ir_i}+a_{t_ir_it_i})-\tfrac{64}{3}a_{t_i}a_{r_i}
$$
whence, by the linearity of the inner product and Lemmas~\ref{23},~\ref{24},~\ref{25}, and~\ref{33}, the computation of $(v_{g_1}, v_{g_2})_V$ reduces to that of $(a_{t_1}a_{r_1}, a_{t_2}a_{r_2})_V$. Now, $\langle \langle a_{t_1}, a_{t_2}\rangle \rangle$ is of type $2B$ and so $a_{t_2}$ is a $0$-eigenvector for $\ad_{a_{t_1}}$. Hence, by Lemma~\ref{ob}, we have
$$
(a_{t_1}a_{r_1}, a_{t_2}a_{r_2})_V=(a_{r_1}a_{t_2},a_{t_1}a_{r_2})_V.
$$
Since $|r_1t_2|=|t_1r_2|=3$, by~(\ref{hyp}), the two algebras $\langle \langle a_{r_1}, a_{t_2}\rangle \rangle$ and $\langle \langle a_{t_1},a_{r_2}\rangle \rangle$ are of type $3A$. By Table~\ref{NS}, the vectors $a_{r_1}a_{t_2}$ and $a_{t_1}a_{r_2}$ are linear combinations of axes and $3$-axes, whence, using Lemmas~\ref{23} and~\ref{33}, a direct computation gives 
$$
(a_{r_1}a_{t_2},a_{t_1}a_{r_2})_V=\tfrac{151}{2^{19}}\: \mbox{ and }\: (v_{g_1}, v_{g_2})_V=\tfrac{53}{256}.
$$ 

Suppose $\langle g_1, g_2\rangle \cong A_6$ and $|g_1g_2|=|g_1g_2^{-1}|=5$, as in the fourth row of Table~\ref{tau44}. Then we may assume $g_1=(1,2)(3,6,4,5)$ and $g_2=(1,2,3,4)(5,6)$. Since $t:=(1,2)(3,4)$ inverts both $g_1$ and $g_2$, by Remark~\ref{start} we get
$$
(v_{g_1}, v_{g_2})_V=\tfrac{5}{48}.
$$

Finally, suppose $|g_1g_2|=5$, $|g_1g_2^{-1}|=4$ and $\langle g_1, g_2\rangle\cong A_6$, as in the fifth row of Table~\ref{tau44}. Then we may assume $g_1=(1,2)(3,5,4,6)$ and $g_2=(1,6)(2,5,3,4)$. Set 
$$
t_1:=(3,5)(4,6), \:\:\:r_1:=(1,2)(3,4), \:\:\:t_2=(2,5)(3,4), \:\:\mbox{ and } \:\: r_2:=(1,6)(2,3)
$$ 
so that $g_1=t_1r_1$ and $g_2=t_2r_2$. 
By Table~\ref{NS}, we have
\begin{align} \label{ex}
(v_{g_1}, v_{g_2})_V=&(a_{t_1}, v_{g_2})_V+(a_{r_1}, v_{g_2})_V +\tfrac{1}{3}(a_{t_1r_1t_1}, v_{g_2})_V+\tfrac{1}{3}(a_{r_1t_1r_1}, v_{g_2})_V\\
&-\tfrac{64}{3}(a_{t_1}a_{r_1},  v_{g_2})_V \nonumber
\end{align}
The first four summands  of Equation~(\ref{ex}) are inner products between axes and $v_{g_2}$, so by Lemma~\ref{24}, 
\begin{align}
\label{ex1}
(v_{g_1}, v_{g_2})_V=& \tfrac{5}{2^7}+\tfrac{5}{2^7}+\tfrac{3^2}{2^8}+\tfrac{3^2}{2^8} - \tfrac{64}{3}(a_{t_1}a_{r_1},  v_{g_2})_V= \tfrac{19}{2^7} - \tfrac{64}{3}(a_{t_1}a_{r_1},  v_{g_2})_V.
\end{align} 
In order to compute the value of  
$(a_{t_1}a_{r_1},  v_{g_2})$, observe that $\langle \langle a_{t_1}, a_{t_2}\rangle \rangle$ is of type $5A$. 
Taking the difference multiplied by $8$ of the two basis vectors of the $0$-eigenspace for  $\ad_{a_{0}}$ for the Norton-Sakuma algebra $\langle \langle a_{0}, a_{1}\rangle \rangle$ of type $5A$ given in~\cite[Table 4]{IPSS}, and evaluating it for $a_0=a_{t_2}$ and $a_1=a_{t_1}$,  we get that    
\begin{equation}\label{e}
e:=-\tfrac{3}{32}a_{t_2}+a_{t_1}+a_{t_1t_2t_1}+a_{t_2t_1t_2}+a_{t_1t_2t_1t_2t_1}
\end{equation} 
is a $0$-eigenvector for $\ad_{a_{t_2}}$ in $\langle \langle a_{t_1}, a_{t_2}\rangle \rangle$. 
By Equation~(\ref{e}) we obtain 
\begin{align}\label{eqe}
(ea_{r_1}, v_{g_2})_V=& -\tfrac{3}{32}(a_{t_2}a_{r_1},  v_{g_2})_V+(a_{t_1}a_{r_1},  v_{g_2})_V+(a_{t_1t_2t_1}a_{r_1},  v_{g_2})_V\\
&+(a_{t_2t_1t_2}a_{r_1},  v_{g_2})_V+(a_{t_1t_2t_1t_2t_1}a_{r_1},  v_{g_2})_V, \nonumber 
\end{align}
whence
\begin{align}\label{eqe2}
(a_{t_1}a_{r_1},  v_{g_2})_V=&(ea_{r_1}, v_{g_2})_V+\tfrac{3}{32}(a_{t_2}a_{r_1},  v_{g_2})_V-(a_{t_1t_2t_1}a_{r_1},  v_{g_2})_V\\
&-(a_{t_2t_1t_2}a_{r_1},  v_{g_2})_V-(a_{t_1t_2t_1t_2t_1}a_{r_1},  v_{g_2})_V. \nonumber
\end{align}
Now $a_{t_2}a_{r_1}\in \langle \langle a_{t_2}, a_{r_1}\rangle \rangle$ and $a_{t_1t_2t_1t_2t_1}a_{r_1}\in \langle \langle a_{t_1t_2t_1t_2t_1},a_{r_1}\rangle \rangle$. These are Norton-Sakuma algebras of type $3A$ (since $|t_2r_1|=3$ and $|t_1t_2t_1t_2t_1r_1|=3$) and so, by Table~\ref{NS}, the products $a_{t_2}a_{r_1}$ and $a_{t_1t_2t_1t_2t_1}a_{r_1}$ are linear combinations of axes and $3$-axes.  Thus, by Lemma~\ref{23} and Lemma~\ref{33} we obtain
\begin{equation}
\label{exx3}(a_{t_2}a_{r_1},  v_{g_2})_V=\tfrac{51}{2^{12}}\:\: \mbox{ and }\:\:(a_{t_1t_2t_1t_2t_1}a_{r_1},  v_{g_2})_V=-\tfrac{13}{2^{12}}.
\end{equation}
The algebra $ \langle \langle a_{t_1t_2t_1}, a_{r_1}\rangle \rangle$ is of type $4A$ and  $v_{(1,2,4,3)(5,6)}$ is a $4$-axis in it. Hence the product $a_{t_1t_2t_1} a_{r_1}$ is a linear combination of axes and $v_{(1,2,4,3)(5,6)}$. Since the pair $(v_{(1,2,4,3)(5,6)}, v_{g_2})$ satisfies the conditions in the third row  of Table~\ref{tau44}, by the previous case we get  
\begin{equation}
\label{exx4}
(a_{t_1t_2t_1}a_{r_1},  v_{g_2})_V=-\tfrac{3^2}{2^{13}}.
\end{equation}
The algebra $\langle \langle a_{t_2t_1t_2}, a_{r_1}\rangle \rangle$ is of type $5A$ with $5$-axis $w_{(1,2,3,6,4)}$, whence the product $a_{t_2t_1t_2}a_{r_1}$ is a linear combination of axes and $w_{(1,2,3,6,4)}$. Since $(1,6)(2,3)$ inverts both $(1,2,3,6,4)$ and $g_2$, by Remark~\ref{start}, we get 
$$(w_{(1,2,3,6,4)}, v_{g_2})_V=\tfrac{7\cdot13}{2^{14}},$$ 
whence 
\begin{equation}
\label{exx5}
(a_{t_2t_1t_2}a_{r_1}, v_{g_2})_V=\tfrac{23}{2^{13}}.
\end{equation} 
Thus, by Equations~(\ref{eqe2}),  (\ref{exx3}), (\ref{exx4}), and (\ref{exx5}), we obtain
\begin{align}
\label{eqe3} 
(a_{t_1}a_{r_1},  v_{g_2})_V=&(ea_{r_1}, v_{g_2})_V+\tfrac{3\cdot 5\cdot 23}{2^{17}}.
\end{align}
To compute the value of $(ea_{r_1}, v_{g_2})_V$, observe that,
 by Table~\ref{NS}, 
$$
v_{g_2}=a_{t_2}+a_{r_2}+\tfrac{1}{3}(a_{r_2t_2r_2}+a_{t_2r_2t_2})-\tfrac{64}{3}a_{t_2}a_{r_2}
$$ 
and so
\begin{align}\label{eqseconda}
(ea_{r_1}, v_{g_2})_V=&(ea_{r_1}, a_{t_2})_V+(ea_{r_1}, a_{r_2})_V\\
&+\tfrac{1}{3}(ea_{r_1}, a_{r_2t_2r_2})_V+\tfrac{1}{3}(ea_{r_1}, a_{t_2r_2t_2})_V
-\tfrac{64}{3}(ea_{r_1}, a_{t_2}a_{r_2})_V.\nonumber
\end{align}
Now, by Equation~(\ref{e}), we obtain 
$$ea_{r_1}= -\tfrac{3}{32}a_{t_2}a_{r_1}+a_{t_1}a_{r_1}+a_{t_1t_2t_1}a_{r_1}+a_{t_2t_1t_2}a_{r_1}+a_{t_1t_2t_1t_2t_1}a_{r_1}$$ which  is a linear combination of axes and odd axes.  Thus, substituting the above expression for $ea_{r_1}$ in Equation~(\ref{eqseconda}), by Lemmas~\ref{23},~\ref{24}, and \ref{25}, we get 
\begin{equation}
\label{penultima}
(ea_{r_1}, v_{g_2})_V=-\tfrac{4009}{2^{18}\cdot 3} -\tfrac{64}{3}(ea_{r_1}, a_{t_2}a_{r_2})_V
\end{equation}
Furthermore, by Lemma~\ref{ob}, we have 
$$
(ea_{r_1}, a_{t_2}a_{r_2})_V=(a_{r_1} a_{t_2}, ea_{r_2})_V.
$$
Since $a_{r_1} a_{t_2}$ is a linear combination of axes and a $3$-axis while $ea_{r_2}$, similarly as above, is a linear combination of axes and  odd axes, using Lemmas~\ref{23}, \ref{24}, \ref{25}, and~\ref{33} we get $(a_{r_1} a_{t_2}, ea_{r_2})_V=\tfrac{1133}{2^{24}}$, whence, by Equation~(\ref{penultima}),  
$$
(ea_{r_1}, v_{g_2})_V=-\tfrac{2053}{2^{17}\cdot 3}.
$$  Substituting this value in Equation~(\ref{eqe3}), we get  
$$
(a_{t_1} a_{r_1}, v_{g_2})_V= -\tfrac{1}{2^8}
$$
and the result follows by Equation~(\ref{ex1}).
\end{proof}

\begin{lemma}\label{45}
Let  $g\in A_6^{(4)}, f\in A_6^{(5)}$. The values of the inner products $(v_{g}, w_{f})_V$ are given in Table~\ref{tau45}. 
\end{lemma}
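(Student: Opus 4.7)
The plan is to proceed along the same lines as Lemmas~\ref{33},~\ref{44}. First, I would classify the $A_6$-orbits on pairs $(\langle g\rangle,\langle f\rangle)$ with $g\in A_6^{(4)}$ and $f\in A_6^{(5)}$. Since the maximal subgroups of $A_6$ isomorphic to $A_5$, $S_4$, or $3^2{:}4$ contain no element of order $4\cdot 5$ combined (no $A_5$ or $3^2{:}4$ has an element of order $4$, and no $S_4$ has an element of order $5$), every such pair generates $A_6$. The orbits are then separated by the pair $\bigl(|gf|,|gf^{-1}|\bigr)$, matching the structure of Table~\ref{tau45}.

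For each orbit I would first try the standard procedure of Remark~\ref{start}: namely, look for $t\in T$ that simultaneously inverts $g$ and $f$ (equivalently $tg$ is an involution and $tf$ has order $2$, so $\langle t,f\rangle\cong D_{10}$). When such a $t$ exists, $v_g$ is expressible (via~\cite[Table~4]{IPSS}) as a $\tfrac14$-eigenvector of $\ad_{a_t}$ plus a linear combination of axes in the Norton-Sakuma $4A$-algebra $\langle\langle a_t,a_{tg}\rangle\rangle$; similarly $w_f$ is, up to axes, a $0$-eigenvector of $\ad_{a_t}$ inside a $5A$-algebra. The orthogonality of eigenspaces attached to distinct eigenvalues then produces a linear equation whose remaining summands are inner products already computed in Lemmas~\ref{23},~\ref{24},~\ref{25},~\ref{33}, giving $(v_g,w_f)_V$ at once.

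For the orbit(s) where no common inverting involution exists, I would imitate the last case of Lemma~\ref{44}. Choose $t_1,r_1,t_2,r_2\in T$ with $g=t_1r_1$ and $f=t_2r_2$, so that by Table~\ref{NS}
\begin{align*}
v_g &= a_{t_1}+a_{r_1}+\tfrac{1}{3}\bigl(a_{r_1t_1r_1}+a_{t_1r_1t_1}\bigr)-\tfrac{64}{3}\,a_{t_1}a_{r_1},\\
w_f &= -\tfrac{1}{2^7}\bigl(3a_{t_2}+3a_{r_2}-a_{t_2r_2t_2}-a_{r_2t_2r_2}-a_{t_2(r_2t_2)^3}\bigr)+a_{t_2}a_{r_2}.
\end{align*}
Expanding $(v_g,w_f)_V$ by bilinearity, all terms except $(a_{t_1}a_{r_1},a_{t_2}a_{r_2})_V$ reduce to already-known products via Lemmas~\ref{23}--\ref{25}. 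To handle the remaining quartic term, I would pick an axis (say $a_{t_2}$ or $a_{t_1}$) against which one of the other generators lies in a $2B$-algebra, so that Lemma~\ref{ob} (or a $0$-eigenvector extracted from~\cite[Table~4]{IPSS}) lets me swap factors and reduce the product to pairings between Norton-Sakuma-algebra basis vectors of type $2A$, $3A$, $4A$, $5A$ that have already been computed.

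The main obstacle, as in Lemma~\ref{44}, will be the orbit where $\langle g,f\rangle=A_6$ and no single $t$ inverts both generators. There the reduction of $(a_{t_1}a_{r_1},a_{t_2}a_{r_2})_V$ may require a nested application of the $0$-eigenvector trick (first for $\ad_{a_{t_2}}$, then a second time for an adjoint appearing inside the resulting expression), combined with the values of $(v_\bullet,v_\bullet)_V$, $(u_\bullet,v_\bullet)_V$, $(u_\bullet,w_\bullet)_V$ from Lemmas~\ref{33},~\ref{44}. The bookkeeping is heavy but entirely mechanical: at each step every term either collapses to a previously tabulated product or to another quartic of the same shape on a pair $(g',f')$ which has already been settled in an earlier orbit, so the computation terminates and yields the last column of Table~\ref{tau45}.
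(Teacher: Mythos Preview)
Your plan is essentially the paper's approach, and it would work. Two shortcuts in the paper are worth noting, however.

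First, the paper handles rows~2 and~5 of Table~\ref{tau45} for free via Remark~\ref{remG}: if $(g,f)$ lies in the orbit of row~1 (resp.~row~4), then $(g,f^2)$ lies in the orbit of row~2 (resp.~row~5), and $w_{f^2}=-w_f$ flips the sign of the inner product. You do not mention this, so you would be recomputing two entries from scratch.

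Second, for the orbit with $|gf|=|gf^{-1}|=5$ (row~3) there is indeed no common inverting involution, but the paper avoids your expansion machinery entirely. Taking $t:=g^2$, one has $(a_t,v_g)_V=0$ by row~4 of Table~\ref{tau4}, so Lemma~\ref{idem} gives $a_t\cdot v_g=0$; meanwhile $t$ \emph{does} invert $f$, so the usual $\tfrac14$-eigenvector $e$ for $\ad_{a_t}$ inside the $5A$-algebra is available. Orthogonality $(v_g,e)_V=0$ then yields $(v_g,w_f)_V=0$ immediately. This is cleaner than writing $g=t_1r_1$, $f=t_2r_2$ and chasing the quartic term.

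Finally, your worry about a ``nested application'' in the hardest case (row~4) is unwarranted: choosing $t_1,r_1,t_2,r_2$ so that $|t_1t_2|=2$, a single use of Lemma~\ref{ob} turns $(a_{t_1}a_{r_1},a_{t_2}a_{r_2})_V$ into $(a_{r_1}a_{t_2},a_{t_1}a_{r_2})_V$, and both factors lie in $3A$-algebras, so the computation closes at once using Lemmas~\ref{23} and~\ref{33}.
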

\begin{table}
$$
\begin{array}{|c|c|c|}
\hline
 |gf|& |gf^{-1}|  & (v_{g}, w_f)_V\\
\hline
\hline
 5& 3 & -\tfrac{91}{2^{14}}\\
\hline
4& 3&\tfrac{91}{2^{14}}\\
\hline
 5&5& 0\\
\hline 
 5&4&-\tfrac{7}{2^{11}}\\
\hline
 5 & 2 & \tfrac{7}{2^{11}}\\
\hline 
\end{array}
$$
\caption{The inner products between $4$-axes and $5$-axes}\label{tau45}
\end{table}
\begin{proof}
Suppose $|gf|=5$ and $|gf^{-1}|=3$, as in the first row 
%, $3$, $4$, and $7$ 
of Table~\ref{tau45}. We may assume $g=(1,6)(2,5,3,4)$ and $f=(1,3,4,2,6)$. Since these two permutations are inverted by $(1,6)(2,3)$, the result follows by Remark~\ref{start}. 
Moreover, since $|gf^2|=4$ and $|gf^{-2}|=3$, the value in the second row of Table~\ref{tau45} is obtained by Remark~\ref{remG}, substituting $f$ by $f^2$.

Next suppose $|gf|=5$ and $|gf^{-1}|=5$, as in the third row of Table~\ref{tau45}. We may assume $g=(1,2)(3,4,5,6)$ and $f=(2,3,4,6,5)$. Set $t:=(3,5)(4,6)=g^2$. Then, by the fourth row of  Table~\ref{tau4}, $(a_{t},v_g)_V=0$, whence by Lemma~\ref{idem}, 
\begin{equation}
    \label{0eig}
    \ad_{a_t}(v_g)=0.
\end{equation} Since $f$ is inverted by $t$, $\langle t,f\rangle$ is a dihedral group of order $10$, whence, by Lemma~\ref{sh}, $\langle \langle a_{t}, a_{tf}\rangle \rangle$ is a Norton-Sakuma algebra of type $5A$. Let 
\begin{equation} 
\label{e2} e:=\tfrac{1}{2^7}(a_{tf}+a_{ft}-a_{tf^2}-a_{tf^3})+w_f.
\end{equation}
By~\cite[Table~4]{IPSS}, $e$ is a  $\tfrac{1}{4}$-eigenvector for $\ad_{a_t}$. Thus, by Equation~(\ref{0eig}), we get 
\begin{equation}
    \label{evg}
    (e, v_g)_V=0.
\end{equation} 
Since, by Lemma~\ref{24}, $(a_{tf}+a_{ft}-a_{tf^2}-a_{tf^3}, v_g)_V=0$, substituting in Equation~(\ref{evg}) the expression of $e$ given in Equation~(\ref{e2}),  we obtain $(v_g, w_f)_V=0$. 

Suppose $|gf|=5$ and $|gf^{-1}|=4$. We may assume $g=(1,2)(3,6,5,4)$ and $f=(1,3,6,2,4)$. 
Set $t_1:=(3,6)(4,5)$, $r_1:=(1,2)(3,5)$, $t_2:=(1,2)(3,6)$, and $r_2:=(1,4)(2,3)$ so that $g=t_1r_1$ and $f=t_2r_2$. Then, by Lemma~\ref{sh}, the Norton-Sakuma algebra $\langle \langle  a_{t_1}, a_{r_1}\rangle \rangle$ (resp.  $\langle \langle  a_{t_2}, a_{r_2}\rangle \rangle$) is of type $4A$ (resp. $5A$), and contains the odd axis $v_g$ (resp. $w_g$). Thus, by Table~\ref{NS},
$$
v_{g}=a_{t_1}+a_{r_1}+\tfrac{1}{3}(a_{r_1g}+a_{gt_1})-\tfrac{64}{3}a_{t_1}a_{r_1}
$$
and 
$$
w_f=-\tfrac{1}{2^7}(3a_{t_2}+3a_{r_2}-a_{t_2f^2}-a_{t_2f^3}-a_{t_2f^4})+a_{t_2}a_{r_2}.
$$
By the linearity of the inner product and the previous lemmas, the computation of $(v_{g}, w_f)_V$ reduces to that of $(a_{t_1}a_{r_1}, a_{t_2}a_{r_2})_V$. Since $|t_1t_2|=2$, by Lemma~\ref{sh}, $\langle \langle a_{t_1}, a_{t_2}\rangle \rangle$ is of type $2B$, whence, by Table~\ref{NS},  $a_{t_1} a_{t_2}=0$ and so, by Lemma~\ref{ob}, 
\begin{equation}\label{eqterza}
(a_{t_1}a_{r_1}, a_{t_2}a_{r_2})_V=(a_{r_1}a_{t_2},a_{t_1}a_{r_2})_V.
\end{equation}
Since $|t_1r_2|=|t_2r_1|=3$, the algebras $\langle \langle a_{t_1},a_{r_2}\rangle \rangle$ and $\langle \langle a_{t_2},a_{r_1}\rangle \rangle$ are of type $3A$. Using  Table~\ref{NS}, we can express the vectors $a_{t_1}a_{r_2}$ and $a_{t_2}a_{r_1}$ as linear combinations of axes and a $3$-axis. Substituting $a_{t_1}a_{r_2}$ and $a_{t_2}a_{r_1}$ with these expressions, by Lemmas~\ref{23}, and~\ref{33}, we obtain 
$$
(a_{r_1}a_{t_2},a_{t_1}a_{r_2})_V= -\tfrac{1}{2^{17}}
$$
 and get 
 $$
 (v_g,w_f)_V=-\tfrac{7}{2^{11}}.
 $$  
 Finally since $|gf^2|=5$ and $|gf^{-2}|=2$, we obtain the value of $(v_g,w_f)_V$ in the last row as we did for the second row.
\end{proof}

\begin{lemma}\label{55}
Let  $f_1, f_2\in A_6^{(5)}$. The values of the inner products $(w_{f_1}, w_{f_2})_V$ are given in 
Table~\ref{tau55}.\footnote{Recall that, by definition, the set $A_6^{(5)}$ depends on the choice of a conjugacy class of $5$-elements in $A_6$. The possible values of $|f_1f_2|$ and $|f_1f_2^{-1}|$ depend on this choice. Numbers in parenthesis refer to the other conjugacy class.}
\end{lemma}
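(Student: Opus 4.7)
The plan is to follow the template established in Lemmas~\ref{23}--\ref{45}. For each row of Table~\ref{tau55} I fix a convenient representative pair $(f_1,f_2)\in A_6^{(5)}\times A_6^{(5)}$ realising the prescribed orders $|f_1f_2|$ and $|f_1f_2^{-1}|$ together with the prescribed isomorphism type of $\langle f_1,f_2\rangle$, and write $f_i=r_is_i$ with $r_i,s_i\in T$. By Lemma~\ref{sh} each $\langle\langle a_{r_i},a_{s_i}\rangle\rangle$ is a Norton--Sakuma algebra of type $5A$, so Table~\ref{NS} gives
\[
w_{f_i}=a_{r_i}\cdot a_{s_i}-\tfrac{1}{2^7}\bigl(3a_{r_i}+3a_{s_i}-a_{s_ir_is_i}-a_{r_is_ir_i}-a_{r_i(s_ir_i)^3}\bigr).
\]
Expanding $(w_{f_1},w_{f_2})_V$ by bilinearity, all terms not involving the products $a_{r_i}\cdot a_{s_i}$ are inner products between axes and $5$-axes, hence known by Lemmas~\ref{23} and~\ref{25}. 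Thus the entire computation reduces to evaluating the single four-axis inner product $(a_{r_1}\cdot a_{s_1},\,a_{r_2}\cdot a_{s_2})_V$.

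The first rows I would dispatch are those admitting a common inverting involution, i.e.\ some $t\in T$ with $f_1^t=f_1^{-1}$ and $f_2^t=f_2^{-1}$. A simple count shows that each $t\in T$ normalises exactly four cyclic subgroups of order~$5$ of $A_6$, so such simultaneous inverters do arise. In these rows Remark~\ref{start} applies verbatim: express (a scalar multiple of) $w_{f_1}$ as a $\tfrac14$-eigenvector of $\ad_{a_t}$ and (a scalar multiple of) $w_{f_2}$ as a $0$-eigenvector, read off from the $5A$ algebra in~\cite[Table~4]{IPSS}; the orthogonality of the eigenspaces, combined with Lemmas~\ref{23} and~\ref{25}, then delivers $(w_{f_1},w_{f_2})_V$.

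For the remaining rows the key tool is Lemma~\ref{ob}. I would choose the decompositions $f_i=r_is_i$ so that at least one cross pair among $\{r_1,r_2\},\{r_1,s_2\},\{s_1,r_2\},\{s_1,s_2\}$ generates a type-$2B$ dihedral subalgebra; say $\langle\langle a_{s_1},a_{r_2}\rangle\rangle$ is of type $2B$, so that $a_{r_2}$ is a $0$-eigenvector of $\ad_{a_{s_1}}$. Lemma~\ref{ob} then rearranges
\[
(a_{r_1}\cdot a_{s_1},\,a_{r_2}\cdot a_{s_2})_V=(a_{r_1}\cdot a_{r_2},\,a_{s_1}\cdot a_{s_2})_V,
\]
where each factor on the right-hand side lies in a dihedral subalgebra of type $2A,3A,3C,4A,$ or $4B$. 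By Table~\ref{NS} such products are explicit linear combinations of axes and at most one odd axis, and every inner product then appearing has already been tabulated in Lemmas~\ref{23}--\ref{45}.

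The main obstacle will be the row (or rows) in which no decomposition of the $f_i$ produces a cross $2B$-pair, directly analogous to the last row of Table~\ref{tau44}. There I would mimic the argument used in the proof of Lemma~\ref{44}: pick $t\in T$ with $(a_t,w_{f_i})_V=0$ (read off from Table~\ref{tau5}), deduce $\ad_{a_t}(w_{f_i})=0$ by Lemma~\ref{idem}, and inside a suitable $5A$ Norton--Sakuma subalgebra $\langle\langle a_t,a_{t'}\rangle\rangle$ introduce the explicit $0$-eigenvector $e$ of $\ad_{a_t}$ supplied by~\cite[Table~4]{IPSS}. The orthogonality $(e\cdot a,\,w_{f_j})_V=0$ for appropriately chosen axes $a$ then produces linear relations which, via iterated applications of Lemma~\ref{ob} and the previously tabulated inner products, determine $(w_{f_1},w_{f_2})_V$. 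The bookkeeping is heavy but strictly mechanical: at every stage the newly appearing inner products are of a type already computed, so the procedure terminates in finitely many steps.
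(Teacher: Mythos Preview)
Your approach is exactly the paper's: row~1 is trivial from Table~\ref{NS}, rows~2 and~3 admit a common inverting involution (in fact $(2,5)(3,4)$ inverts both $(1,3,5,2,4)$ and each of $(2,3,6,4,5)$, $(2,3,4,5,6)$), so Remark~\ref{start} applies, and row~4 is handled by Lemma~\ref{ob} after choosing decompositions with a $2B$ cross-pair.

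The one point worth correcting is your anticipated ``main obstacle''. It does not arise here: for the last row one can take $f_1=(1,2,3,4,5)=t_1r_1$ with $t_1=(2,5)(3,4)$, $r_1=(1,2)(3,5)$ and $f_2=(2,3,5,4,6)=t_2r_2$ with $t_2=(2,4)(3,5)$, $r_2=(2,6)(3,4)$; then $|t_1t_2|=2$, so $\langle\langle a_{t_1},a_{t_2}\rangle\rangle$ is of type $2B$ and Lemma~\ref{ob} reduces $(a_{t_1}a_{r_1},a_{t_2}a_{r_2})_V$ to $(a_{r_1}a_{t_2},a_{t_1}a_{r_2})_V$, both factors lying in $3A$ subalgebras. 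So the heavier machinery you sketch is unnecessary. Note also that your proposed fallback would not work as stated: Lemma~\ref{idem} requires $e$ to be an idempotent, and $w_f$ is not one (see the $5A$ row of Table~\ref{NS}), so $(a_t,w_f)_V=0$ does not by itself force $a_t\cdot w_f=0$.
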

\begin{table}
$$
\begin{array}{|c|c|c|}
\hline
 |f_1f_2| & |f_1f_2^{-1}|  & (w_{f_1}, w_{f_2})\\
\hline
\hline
 5& 1 & \tfrac{5^37}{2^{19}}\\
\hline
 3 \:(5)&5 \:(3)& -\tfrac{21}{2^{19}}\\
\hline
% 2&3& \tfrac{21}{2^{19}}\\
%\hline
 4 \:(4) & 4 \:(4) & \tfrac{133}{2^{19}}\\
\hline 
 5\: (2) & 5 \:(4) & \tfrac{119}{2^{20}}\\
\hline 
%4\cdot 2& 2^2 &\tfrac{119}{2^{20}}\\
%\hline

\end{array}
$$

\caption{The inner products between two $5$-axes}\label{tau55}
\end{table}
\begin{proof}
If $f_1$ and $f_2$ are as in the first row of Table~\ref{tau55}, then $f_1=f_2$, whence, by Table~\ref{NS}, $(w_{f_1}, w_{f_1})_V=\tfrac{5^37}{2^{19}}$.

Suppose $|f_1f_2|=3$ (resp. $|f_1f_2|=4$) and $|f_1f_2^{-1}|=5$ (resp. $|f_1f_2^{-1}|=4$). Then we may assume $f_1=(1,3,5,2,4)$ and $f_2=(2,3,6,4,5)$ (resp. $f_2=(2,3,4,5,6)$). Thus $(2,5)(3,4)$ inverts $f_i$ for $i\in \{1,2\}$ and the result for the second  (resp. third) row of Table~\ref{tau55} follows by Remark~\ref{start}. 

Finally suppose $|f_1f_2|=|f_1f_2^{-1}|=5$. Then we may assume $f_1=(1,2,3,4,5)$ and $f_2=(2,3,5,4,6)$. Set $t_1:=(2,5)(3,4)$, $r_1:=(1,2)(3,5)$, $t_2:=(2,4)(3,5)$, and $r_2:=(2,6)(3,4)$ so that $f_1=t_1r_1$ and $f_2=t_2r_2$. Then, by the Norton-Sakuma theorem, the linearity of the inner product and lemmas above, the computation of $(w_{f_1}, w_{f_2})_V$ reduces to that of $(a_{t_1}a_{r_1}, a_{t_2}a_{r_2})_V$. Now, $\langle \langle a_{t_1}, a_{t_2}\rangle \rangle$ is of type $2B$ and so, by Lemma~\ref{ob} we have
$$
(a_{t_1}a_{r_1}, a_{t_2}a_{r_2})_V=(a_{r_1}a_{t_2},a_{t_1}a_{r_2})_V.
$$
Since $|r_1t_2|=|t_1r_2|=3$, the algebras $\langle \langle a_{r_1}, a_{t_2}\rangle \rangle$ and $\langle \langle a_{t_1}, a_{r_2}\rangle \rangle$ are both of type $3A$, hence the products $a_{r_1}a_{t_2}$ and $a_{t_1}a_{r_2}$ are linear combinations of axes and $3$-axes. Thus, by the same argument used in Lemma~\ref{45} to compute the last row of Table~\ref{tau45}, we get $(a_{r_1}a_{t_2},a_{t_1}a_{r_2})_V=\tfrac{3\cdot 19}{2^{18}}$, and the result follows.
\end{proof}

\begin{proposition}\label{dime}
$\:$
\begin{enumerate}
 \item $\dim(V^{(2A)})=45$;
  \item $\dim(V^{(3A)})=40$;
   \item $\dim(V^{(4A)})=45$.
\end{enumerate}
\end{proposition}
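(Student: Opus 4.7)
My plan is to deduce the three dimension equalities by a Gram matrix argument based on the positive definiteness of $(\,,\,)_V$. By Remark~\ref{positive} the set of $N$-axes is in bijection with $A_6^{(N)}$ for each $N\in\{2,3,4\}$, and a direct enumeration in $A_6$ gives $|A_6^{(2)}|=45$, $|A_6^{(3)}|=40$ (the $3$-cycles generate $20$ cyclic subgroups of order $3$, and the $(3,3)$-elements generate another $20$), and $|A_6^{(4)}|=45$; these cardinalities coincide with the claimed dimensions, so it is enough to show that each family of $N$-axes is linearly independent.

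Since the bilinear form on $V$ is positive definite, linear independence of a finite set of vectors is equivalent to nonsingularity of its Gram matrix. For $N\in\{2,3,4\}$ let $G_N$ denote the Gram matrix of the $N$-axes: for $N=2$ its entries are read off from Table~\ref{NS} combined with Lemma~\ref{sh}; for $N=3$ from Table~\ref{tau33}; for $N=4$ from Table~\ref{tau44}. In each case the entry depends only on the isomorphism type of the subgroup generated by the two underlying group elements (and, where needed, on the individual orders of their products), so that $G_N$ commutes with the permutation action of $A_6$ on the set $\Omega_N$ of $N$-axes and defines an $A_6$-equivariant self-adjoint operator on $\R\Omega_N$.

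By Schur's lemma applied to the decomposition of $\R\Omega_N$ into its isotypic components under the real irreducible $\R[A_6]$-modules, $G_N$ acts as a scalar on each isotype, and that scalar can be computed via a character sum in the tabulated entries of $G_N$. Using the character table of $A_6$ one then decomposes $\R\Omega_N$ explicitly and checks, for each isotype occurring in this decomposition, that the corresponding scalar is nonzero; this yields $\det G_N\neq 0$, hence the required linear independence and the three dimension equalities.

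The main obstacle is this last verification. The cases $N=2$ and $N=4$ involve two transitive rank-$45$ permutation representations of $A_6$, on the bitranspositions and on the cyclic subgroups of order $4$ respectively, each of which decomposes nontrivially and requires one scalar computation per irreducible constituent. The case $N=3$ is a bit more delicate: $\Omega_3$ splits into two $A_6$-orbits of size $20$ and $G_3$ has off-diagonal blocks linking these two orbits, whose nonvanishing must also be controlled using the $A_4$-, $A_5$-, $A_6$- and $3\times 3$-rows of Table~\ref{tau33}. These verifications are routine in principle but numerically heavy.
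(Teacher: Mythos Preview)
Your overall strategy---show that the Gram matrix of each family of $N$-axes is nonsingular---is precisely what the paper does. The paper's own proof, however, is a one-line appeal to a direct rank computation of these Gram matrices (carried out in GAP, using the entries tabulated in Section~\ref{Scalar}), with no representation-theoretic decomposition.

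The representation-theoretic route you sketch has a genuine gap. Your claim that ``$G_N$ acts as a scalar on each isotype'' is false whenever an irreducible $\R[A_6]$-module occurs in $\R\Omega_N$ with multiplicity greater than one: Schur's lemma then says only that the commutant of the isotypic component $mV_\lambda$ is $M_m(D)$ for the appropriate division ring $D$, so $G_N$ restricts to that isotype as an $m\times m$ matrix tensored with the identity on $V_\lambda$, not a scalar. And this situation actually arises in all three cases. For instance, the permutation module $\R\Omega_2$ on the $45$ bitranspositions, decomposed later in the paper as an $\R[S_6]$-module in Equation~(\ref{mod2A}), contains $\Sp^{(4,2)}$ with multiplicity~$2$; since $(4,2)$ is not self-conjugate, $\Sp^{(4,2)}$ restricts irreducibly to $A_6$, so the $9$-dimensional irreducible $A_6$-module occurs in $\R\Omega_2$ with multiplicity~$2$. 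The same $9$-dimensional module also has multiplicity~$2$ in $\R\Omega_3$ and $\R\Omega_4$. Your proposed ``character sum'' then computes only the \emph{trace} of the corresponding $2\times 2$ block, not its eigenvalues, and a nonzero trace does not exclude a zero eigenvalue.

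The approach is repairable: on each multiplicity-$m$ isotype you must produce the full $m\times m$ intertwining matrix (e.g.\ by choosing explicit $A_6$-projections $\R\Omega_N\to V_\lambda$ and evaluating the form between their images) and check its determinant. But at that point you are computing determinants of explicit matrices built from the tabulated inner products, which is essentially the paper's direct computation of $\operatorname{rank}(G_N)$ with extra bookkeeping. The cleanest correct argument here remains the paper's: assemble $G_N$ from Tables~\ref{NS},~\ref{tau33},~\ref{tau44} and verify (by machine) that its rank is $45$, $40$, $45$ respectively.
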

\begin{proof}
 This follows by computing the ranks of the Gram matrices associated to the inner product with respect to the sets $\mathcal A$, $3Y$, and $4Y$, whose entries are given in Tables~\ref{NS}-\ref{tau55} (see~\cite[r2xA6\_innerproduct.g]{code}). 
\end{proof}

%%%%%%%%%%%%%%%%%%%%%%%%%%%%%%%%%%%%%%%%
%%%%%%%%%%%%%%%%%%%%%%%%%%%%%%%%%%%%%%%%
\section{Maximal subalgebras}\label{maxsub}

Since the maximal subgroups of $A_6$ isomorphic to $A_5$ or $S_4$ are generated by involutions  in $T$, the axes $a_{t}$ corresponding to these involutions  generate  Majorana subalgebras of $V$.
We collect here some results about these subalgebras. 

Let $S$ be a subgroup of $A_6$ isomorphic to $S_4$.
By Proposition~\ref{maximal}.$(1)$, all nine involutions in $S$ are Miyamoto involutions, hence
$V(S)$ affords a Majorana representation of $S$ with nine Majorana axes. By Lemma~\ref{sh} and condition (\ref{hyp}), this representation has shape $(2B, 3A, 4A)$, hence, by~\cite[Table~4]{MS}, it is unique and has dimension $25$.
Denote, as usual, by $S^\prime$ the derived subgroup of $S$ and by $\hat V(S)$ the subalgebra of $V(S)$ generated by the set $\{a_t\:|\:t\in S^{(2)}\setminus S^\prime\}$. Then $\hat V(S)$ affords the Majorana representation of $S$ with six Majorana axes and shape $(2B, 3A)$. By~\cite[Section 5]{IPSS}, $\hat V(S)$ has dimension $13$, it is not $2$-closed, and a basis for $\hat V(S)$ is given by the set 
$$
\{a_{t} \,|\,  t\in S^{(2)}\setminus S^\prime \}\cup \{u_h \,|\, h \in S^{(3)}\}\cup  \{
\overline v_{g}, \,|\, g \in S^{(4)}\}.
$$ 
Observe that, if $g\in A_6^{(4)}$, say $g=(1,2)(3,4,5,6)$, then $g$ is contained in exactly two subgroups of $A_6$ isomorphic to $S_4$, namely 
$$
S:=\langle (1,2)(5,6), (3,4,5) \rangle \:\mbox{ and }\:
S^\ast:=\langle (1,5)(2,3), (1,3,4)(2,5,6) \rangle .
$$
It follows that there are two distinct subalgebras $\hat V(S)$ and $\hat V(S^\ast)$ of $V$, each of which containing a \ghost $4$-axis corresponding to $\langle g\rangle$.
For the remainder of the paper we assume that condition (M8D) holds.
\medskip

In the next section we shall prove that every \ghost $4$-axis is contained in the $2$-closure of $V$. 
To this aim we shall need to compute inner products between axes and \ghost $4$-axes and to complete this task we shall use some information on algebra multiplication which we deduce from the maximal subalgebras of $V$. 
\medskip

Note that $|S^{(3)}|=4$  and $|S^{(4)}|=3$.  
Let $S^{(3)}=\{ h_1, h_2, h_3, h_4 \}$ and for an involution $s\in S^\prime$ define
\begin{equation}\label{Eqgamma}
\gamma_{s}:=a_{s}\cdot (u_{h_1}+u_{h_2}+u_{h_3}+u_{h_4}).
\end{equation} 
Similarly, let $S^{(4)}=\{g_1, g_2, g_3\}$ and for $s\in S^{(2)}\setminus S^\prime$ define
\begin{equation}\label{Eqeta}
\eta_{s}:=a_{s}\cdot (v_{g_1}+v_{g_2}+v_{g_3}).
\end{equation}
Since, every $x \in C_{A_6}(s)$ acts trivially on $v_{g_1}+v_{g_2}+v_{g_3}$ and on $u_{h_1}+u_{h_2}+u_{h_3}+u_{h_4}$, we have
\begin{lemma}\label{invariance}
$\eta_{s}$ and $\gamma_{s}$ depend only on $S$ and $s$.
\end{lemma}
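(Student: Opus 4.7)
The plan is to verify that the definitions of $\gamma_s$ and $\eta_s$ in (\ref{Eqgamma}) and (\ref{Eqeta}) produce canonical functions of the pair $(S,s)$, and to establish along the way the stronger $C_{A_6}(s)$-invariance invoked in the sentence preceding the lemma.

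First, I would argue that the sums $\sigma := u_{h_1}+u_{h_2}+u_{h_3}+u_{h_4}$ and $\tau := v_{g_1}+v_{g_2}+v_{g_3}$ are well-defined elements of $V$ intrinsic to $S$. By Axiom (M8) and Remark~\ref{remG}, the $3$-axis $u_h$ depends only on the cyclic subgroup $\langle h \rangle$ (equivalently, $u_h = u_{h^{-1}}$), and likewise $v_g$ depends only on $\langle g \rangle$ (with $v_g = v_{g^{-1}}$). Hence replacing any representative $h_i \in \langle h_i \rangle$ by its inverse, or reordering the summands, does not alter $\sigma$; the same holds for $\tau$. Both sums are therefore determined entirely by the set of order-$3$ (respectively, order-$4$) cyclic subgroups of $S$. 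Since $a_s$ is associated canonically to $s$ via $\tau_{a_s}=s$, the products $\gamma_s = a_s \cdot \sigma$ and $\eta_s = a_s \cdot \tau$ depend only on $(S,s)$.

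For the $C_{A_6}(s)$-invariance, I would use the fact that $A_6$ acts on $V$ by algebra automorphisms that permute each family of odd axes according to the conjugation action on the relevant cyclic subgroups. In particular, any $x \in S$ fixes both $\sigma$ and $\tau$, because conjugation permutes the sets of order-$3$ and order-$4$ cyclic subgroups of $S$ among themselves. For $s \in S'$, since $|A_6|/45 = 8 = |C_S(s)|$ (the latter being the centralizer of an involution in $V_4$ inside $S_4$), we obtain $C_{A_6}(s) = C_S(s) \subseteq S$, yielding invariance of $\sigma$ under $C_{A_6}(s)$. For $s \in S\setminus S'$, relevant for $\eta_s$, one verifies directly, using Proposition~\ref{maximal}$(6)$, that $C_{A_6}(s)$ normalizes $S^{(4)}$ as a set of cyclic subgroups (equivalently, the Sylow $2$-subgroup of $S$ containing $s$), and hence fixes $\tau$; $a_s$ is also $C_{A_6}(s)$-fixed, so $\eta_s$ is.

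I do not expect any genuine obstacle: the lemma is essentially a well-definedness statement asserting that the notation $\gamma_s$ and $\eta_s$ is unambiguous, and the argument is a direct application of Axiom (M8) combined with elementary subgroup arithmetic in $A_6$.
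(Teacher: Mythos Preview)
Your core argument is correct: the sums $\sigma=\sum u_{h_i}$ and $\tau=\sum v_{g_j}$ depend only on the collections of order-$3$ and order-$4$ cyclic subgroups of $S$ (by Axiom~(M8)), and $a_s$ is canonically attached to $s$, so $\gamma_s=a_s\cdot\sigma$ and $\eta_s=a_s\cdot\tau$ are functions of $(S,s)$ alone. This is all the lemma asserts, and it matches the paper's treatment, where the lemma carries no formal proof beyond the sentence preceding it.

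Your supplementary claim about $C_{A_6}(s)$-invariance of $\tau$ for $s\in S\setminus S'$, however, does not go through. You assert that $C_{A_6}(s)$ normalises the set $S^{(4)}$, ``equivalently, the Sylow $2$-subgroup of $S$ containing $s$''. The parenthetical equivalence already fails: a Sylow $2$-subgroup of $S\cong S_4$ contains only \emph{one} of the three cyclic subgroups of order~$4$ in $S$. More importantly, for $s\in S\setminus S'$ one has $|C_S(s)|=4<8=|C_{A_6}(s)|$, so $C_{A_6}(s)\not\le S$, and in fact $C_{A_6}(s)$ need not permute $S^{(4)}$. For instance, take $S$ to be the setwise stabiliser of $\{1,2\}$ in $A_6$, $s=(1,2)(5,6)$, and $x=(1,5)(2,6)\in C_{A_6}(s)$; then $g_1=(1,2)(3,4,5,6)\in S^{(4)}$ but $g_1^{\,x}=(5,6)(1,2,3,4)\notin S$, so $\tau^x\neq\tau$. (The paper's own preceding sentence makes the same blanket assertion and appears to share this slip; the case $s\in S'$ is fine, since there your order count gives $C_{A_6}(s)=C_S(s)\le S$.) None of this affects the lemma itself, which is secured by your first paragraph.
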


\begin{proposition}\label{2xS4}
%Let $\overline K=\langle z\rangle \times K$, with $K=\langle G^{(2)}\cap K\rangle\cong S_4$. Let $K_0$ be the Klein subgroup of $K$. 
With the above notation, the following assertions hold
\begin{enumerate}
\item $V(S)$ has dimension $25$ and it is $3$-closed;
\item a basis of $V(S)$ is given by the union of the following sets
\begin{enumerate}
 \item [a)]  $\{a_{s} \,|\, s\in S^{(2)}\}$ ($2$-axes),
 \item [b)]  $\{u_h\,|\, h \in S^{(3)}\}$ ($3$-axes),
 \item [c)] $\{v_{g_1},v_{g_2}, v_{g_3}\}$ ($4$-axes),
  \item [d)]  $\{\overline v_{g_1},\, \overline v_{g_2},\,\overline v_{g_3}\}$ (\ghost $4$-axes),
\item [e)] $\{\eta_{s}\,|\,s\in S^{(2)}\setminus S^\prime\}$,
\end{enumerate}
\item for every $g\in S^{(4)}$, we have $a_{g^2}\cdot \ov_{g}=0$ and  $v_g\cdot \ov_{g}=0$,
\item for every  $ g\in S^{(4)}$, we have 
\begin{eqnarray}\label{formulagamma}
\gamma_{ g ^2}&=&\tfrac{4}{15}a_{g^2}+\tfrac{26}{135}\sum_{s\in S^\prime\setminus \langle  g^2\rangle} a_{s}-\tfrac{44}{135}\sum_{s\in C_S( g^2)^{(2)}\setminus S^\prime}a_{s}+\tfrac{1}{16}\sum_{h \in S^{(3)}} u_h \\
&& -\tfrac{2}{9} v_{ g}-\tfrac{2}{15}\sum_{l\in S^{(4)}\setminus \{ g\}} v_l+\tfrac{32}{45}\sum_{s\in C_S( g^2)^{(2)}\setminus S^\prime}\eta_{s}. \nonumber 
\end{eqnarray}
%[ 4/15, 26/135, 26/135, -44/135, 0, 0, -44/135, 0, 0, 1/16, 1/16, 1/16, 1/16, -2/15, -2/15, -2/9, 0, 0, 0, 32/45, 0, 0, 32/45, 0, 0, 0, 0, 0, 0, 0, 0, 0, 0, 0 ]
\end{enumerate}
\end{proposition}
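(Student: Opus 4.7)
The plan is to identify $V(S)$ with the unique Majorana representation of $S_4$ of shape $(2B,3A,4A)$ and to build up an explicit basis from $\hat V(S)$ together with the $4$-axes and the vectors $\eta_s$. Since $V(S)$ is generated by nine axes and, by Lemma~\ref{sh} combined with~\eqref{hyp}, has shape $(2B,3A,4A)$, the uniqueness statement in~\cite[Table~4]{MS} forces $\dim V(S)=25$, giving the dimension claim in~(1).

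For~(2), every vector in~(a)--(e) lies in $V(S)$: the nine axes by definition; the four $3$-axes and three $4$-axes because they sit inside Norton--Sakuma subalgebras generated by pairs of elements of $\Ax$; the three dormant $4$-axes because they lie in $\hat V(S)\subseteq V(S)$ (by~\cite[Section~5]{IPSS}); and the six $\eta_s$ by their very definition as products of elements of $V(S)$. To prove that the resulting $25$ vectors form a basis it remains to establish linear independence, which I would do by computing the Gram matrix. Entries involving only axes and odd axes come from Tables~\ref{tau3}--\ref{tau45}; entries involving a $\bar v_g$ are reduced to these by substituting its explicit expression as a linear combination of the axes in $\{a_t:t\in S^{(2)}\setminus S'\}$ and the $3$-axes given in~\cite[Section~5]{IPSS}; entries involving an $\eta_s$ are reduced by expanding~\eqref{Eqeta} via Table~\ref{NS}. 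A rank computation, conveniently run with the \texttt{GAP} code of~\cite{code}, returns rank~$25$. Since every basis vector is by construction a linear combination of products of at most three axes, $V(S)$ coincides with its $3$-closure, completing~(1).

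For~(3), to show $a_{g^2}\cdot \bar v_g=0$ I first verify $(a_{g^2},\bar v_g)_V=0$ by substituting the IPSS expression for $\bar v_g$ and applying Table~\ref{tau3} — the $C_S(g^2)$-orbit structure on $S^{(2)}\setminus S'$ and $S^{(3)}$ causes the individual contributions to cancel — and then invoke Lemma~\ref{idem} since $\bar v_g$ is idempotent. For $v_g\cdot \bar v_g=0$ I expand $v_g$ through Table~\ref{NS} as $a_r+a_s+\tfrac{1}{3}(a_{rsr}+a_{srs})-\tfrac{64}{3}\,a_r\cdot a_s$ with $r,s\in S^{(2)}$ and $rs=g$, distribute $\cdot\,\bar v_g$, and evaluate each summand inside $V(S)$: products $a_t\cdot \bar v_g$ with $t\in S^{(2)}\setminus S'$ are read off from the multiplication table of $\hat V(S)$, while products with $t\in S'$ are handled using the first identity (when $t$ is conjugate to $g^2$) or expanded in the basis of~(2) with coefficients determined by associativity against each basis element; the remaining $(a_r\cdot a_s)\cdot \bar v_g$ is then reduced by associativity $(a_ra_s,\bar v_g\cdot w)_V=(a_s(a_r\bar v_g),w)_V$ to the previous computations.

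For~(4), the formula~\eqref{formulagamma} is obtained by summing $a_{g^2}\cdot u_{h_i}$ over $i\in\{1,\dots,4\}$. Since $\langle g^2,h_i\rangle\cong A_4$ for each $i$, $a_{g^2}$ is never contained in the $3A$ subalgebra of $u_{h_i}$, so the products must be determined indirectly: I write each $a_{g^2}\cdot u_{h_i}$ as an unknown linear combination of the basis of~(2) and fix its coefficients via the associativity identities $(a_{g^2}\cdot u_{h_i},w)_V=(u_{h_i},a_{g^2}\cdot w)_V$ as $w$ ranges over the basis, the right-hand sides being completely handled by Tables~\ref{tau3}--\ref{tau55} together with Table~\ref{NS}. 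Summing over $i$ and grouping by the $C_S(g^2)$-orbits on $S^{(2)}$, $S^{(3)}$ and $S^{(4)}$ produces the coefficients displayed in~\eqref{formulagamma}. The main obstacle is the second identity in~(3): Lemma~\ref{idem} cannot be invoked since neither factor is a Majorana axis, so several products must be untangled simultaneously — either step by step as sketched above, or by verifying the identity directly inside the unique $25$-dimensional MS representation with the aid of~\cite{code}.
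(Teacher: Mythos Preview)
Your approach is essentially the paper's: the paper's own proof consists of two sentences citing~\cite{Marta} and the \texttt{GAP} file~\cite[algebra2xS4.g]{code}, so both you and the authors ultimately rest the proposition on the explicit construction of the unique $25$-dimensional $(2B,3A,4A)$ representation of $S_4$ and a machine check. You in fact supply more of a roadmap than the paper does, and the outline for~(1),~(2) and the first identity in~(3) (via $(a_{g^2},\bar v_g)_V=0$ and Lemma~\ref{idem}) is sound.

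One correction: the ``associativity'' step you invoke for the second identity in~(3),
\[
(a_r a_s,\ \bar v_g\cdot w)_V \;=\; (a_s(a_r\bar v_g),\ w)_V,
\]
is not valid in general. Associativity of the form only gives $(a_r a_s,\bar v_g w)_V=((a_r a_s)\bar v_g,w)_V$, which is circular, or $(a_r a_s,\bar v_g w)_V=(a_s,a_r(\bar v_g w))_V$; getting $a_s(a_r\bar v_g)$ on one side requires the extra hypothesis of Lemma~\ref{ob} (a $0$-eigenvector for $\ad_{a_r}$), which does not hold here. Since you already flag $v_g\cdot\bar v_g=0$ as the genuine obstacle and fall back on verifying it inside the explicit $25$-dimensional model via~\cite{code}, this slip does not derail your argument---but the intermediate reduction as written should be dropped.
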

\begin{proof}
Claims {\it (1)}, {\it (2)} and {\it (3)} have been proved in~\cite{Marta} and can be checked using~\cite[algebra2xS4.g]{code}.
The formula for $\gamma_{g ^2}$ in {\it (4)} has been computed using the construction of the algebra $V(S)$ in~\cite{Marta} (see~\cite[algebra2xS4.g]{code}).
\end{proof}

\begin{cor}\label{product44f}
For every $ g\in A_6^{(4)}$ we have $a_{g^2}\cdot \ov_{g}=0$ and $v_g\cdot \ov_{g}=0$.
\end{cor}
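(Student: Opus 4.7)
The plan is to reduce the corollary immediately to Proposition~\ref{2xS4}(3) by showing that every element of order $4$ of $A_6$ lies in some subgroup isomorphic to $S_4$. By Proposition~\ref{maximal}(4), given any $g\in A_6^{(4)}$ there exist (in fact exactly two) maximal subgroups of $A_6$ isomorphic to $S_4$ containing $g$; pick any one of them, call it $S$. Then $g\in S^{(4)}$, so Proposition~\ref{2xS4}(3), applied inside the subalgebra $V(S)\subseteq V$, yields the two identities $a_{g^2}\cdot \overline{v}_g=0$ and $v_g\cdot \overline{v}_g=0$ in $V(S)$, hence in $V$.

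The only subtlety — and the reason the corollary is stated as a separate result rather than just invoking Proposition~\ref{2xS4}(3) directly — is that the dormant $4$-axis $\overline{v}_g$ originally arose inside a specific subalgebra $\hat V(S)$ attached to a chosen $S_4$-subgroup $S$ containing $g$. I would emphasize that, precisely because condition (M8D) is in force from the point where it is stated until the end of the paper, the element $\overline{v}_g$ is a well-defined vector of $V$ depending only on the cyclic subgroup $\langle g\rangle$ of $A_6$ and not on the auxiliary choice of $S$. Hence the equalities obtained in $V(S)$ are intrinsic to $V$ and independent of which of the two maximal $S_4$-subgroups containing $g$ was used.

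Since $v_g$ itself is also uniquely determined by $\langle g\rangle$ (as the $4A$-axis of the Norton–Sakuma dihedral algebra generated by any two axes whose product has order $4$ and equals $g$, cf.\ Axiom (M8) and Remark~\ref{positive}), and $a_{g^2}$ depends only on the involution $g^2$, both sides of the two identities are well defined intrinsically in $V$. There is no real obstacle: the corollary is a bookkeeping statement packaging Proposition~\ref{maximal}(4), axiom (M8D), and Proposition~\ref{2xS4}(3) into a form convenient for the inner-product computations of the next section, where one will need these vanishings for arbitrary $g\in A_6^{(4)}$ rather than for $g$ inside a fixed $S_4$.
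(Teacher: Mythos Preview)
Your proposal is correct and follows essentially the same route as the paper: pick a maximal $S_4$-subgroup containing $g$ (Proposition~\ref{maximal}(4)) and invoke Proposition~\ref{2xS4}(3). Your additional remarks on the role of condition (M8D) in making $\overline v_g$ independent of the choice of $S$ are accurate and helpful, though the paper's proof leaves this implicit.
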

\begin{proof}
If $g\in A_6^{(4)}$, then $g$ is contained in maximal subgroup of $A_6$ isomorphic to $S_4$ and the result follows by Proposition~\ref{2xS4}.{\it (3)}.
\end{proof}

\begin{lemma}\label{formulaa2*v5}
With the notation above, let $g\in S^{(4)}$ and let $g^2, t,r$ be the involutions of $S^\prime$. Then 
\begin{eqnarray*}
a_{t}\cdot \overline v_g&=&\tfrac{5}{36}a_{ g^2}+\tfrac{1}{16}a_{t}+\tfrac{1}{48}a_{r}+\tfrac{1}{9}\sum_{s\in N_S( g)^{(2)}\setminus S^\prime} a_{s} -\tfrac{11}{72}\sum_{s\in S^{(2)}\setminus C_S( g^2)} a_{s}\\
&&+ \tfrac{15}{512}\sum_{h \in S^{(3)}} u_h-\tfrac{5}{48}\sum_{l \in S^{(4)}\setminus \{g\}}v_l+\tfrac{1}{24} v_{g}\\
&& -\tfrac{1}{3}\left (\sum_{r\in N_S(g)^{(2)}\setminus S^\prime} \eta_{r} -\sum_{r\in S^{(2)}\setminus C_S(g^2)} \eta_{r} \right )
\end{eqnarray*}
\end{lemma}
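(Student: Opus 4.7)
The vector $a_t\cdot \overline v_g$ lies in the subalgebra $V(S)$, which by Proposition~\ref{2xS4} is $3$-closed of dimension $25$ and admits the explicit basis listed there. The plan is to expand $a_t\cdot\overline v_g$ in that basis and then rewrite the result in the symmetry-adapted form displayed in the statement, in direct analogy with the derivation of $\gamma_{g^2}$ in Proposition~\ref{2xS4}(4).

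Concretely, I would realise $V(S)$ through the explicit construction of the (unique) shape-$(2B,3A,4A)$ Majorana representation of $S_4$ given in~\cite{Marta} and implemented in~\cite{code}, and then read off $a_t\cdot \overline v_g$ from the resulting multiplication table. This produces twenty-five numerical coefficients, one per basis vector.

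To rewrite these coefficients in compact form, I would exploit the stabiliser $C_S(t)\cap N_S(\langle g\rangle)=V_4$, the Klein four-group sitting inside $S^\prime$, which fixes both $a_t$ and $\overline v_g$ and hence acts on the twenty-five coefficients. Its orbits on the nine axes are the three singletons $\{g^2\},\{t\},\{r\}$ together with the two unions $N_S(g)^{(2)}\setminus S^\prime$ and $S^{(2)}\setminus C_S(g^2)$; on the four $3$-axes $V_4$ acts transitively, forcing the $u_h$ to share a single coefficient; on the three $4$-axes it acts trivially, so a priori each $v_l$ could carry its own scalar, and one simply observes from the explicit computation that the two $v_l$ with $l\neq g$ receive the same value. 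Grouping the twenty-five raw coefficients according to this orbit structure yields exactly the asserted formula. The main obstacle is purely bookkeeping: converting the raw output into the compact orbit-based expression; no new conceptual ingredient is required, since existence, uniqueness, dimension, and a full multiplication table for $V(S)$ are already available from~\cite{MS,Marta,code}.
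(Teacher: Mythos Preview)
Your approach is exactly the paper's: the proof there reads in its entirety ``This formula has been computed in~\cite[algebra2xS4.g]{code}'', i.e.\ one realises $V(S)$ via the explicit construction in~\cite{Marta}, multiplies, and reads off the coefficients. One small correction to your orbit bookkeeping: the stabiliser $V_4=S'$ has \emph{three} orbits of size~$2$ on the six involutions outside $S'$ (and likewise on the six $\eta_s$), not two, so the equality of coefficients across the four elements of $S^{(2)}\setminus C_S(g^2)$ is another coincidence revealed only by the computation, just like the one you note for the $v_l$.
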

\begin{proof}
This formula has been computed in~\cite[algebra2xS4.g]{code}.
%#[ 1/16, 5/36, 1/48, -11/72, -11/72, 1/9, -11/72, 1/9, -11/72, 15/512, 15/512, 15/512, 15/512, 1/24, -5/48, -5/48, 0, 0, 0, 1/3, 1/3, -1/3, 1/3, -1/3, 1/3, 0, 0, 0, 0, 0, 0, 0, 0, 0, 0, 0, 0 ]  
\end{proof}

Let
\begin{equation}\label{defU}
U:=V^{(2A)}+ V^{(3A)}+ V^{(4A)}
\:\: \mbox{ and } \:\:U^{\bullet}:=U+ \langle \overline v_{l} \mid l\in G^{(4)} \rangle .
\end{equation} 

Since $\Miy(\A)=A_6$, $U$ and  $U^{\bullet}$ are invariant under the action of $\Miy(\A)$.

\begin{lemma}\label{w4}
With the above notation, let $s\in S^{(2)}\setminus S^\prime$ and let $r$, $t$, and $rt$ be the involutions in $S^\prime$. Then there are elements $h$ and $h_1$ in $S^{(3)}$, $s\in S^{(2)}\setminus S^\prime$  
%and $K=\{1, r, t,rt\}$. Then, for every $s\in S^{(2)}\setminus K$
and a unique element $g_1\in S^{(4)}$, such that
\begin{enumerate}
\item   $rs=sr$,  
\item  $h^{s}=h^{-1}$ and $h_1^{s}=h_1^{-1}$,  
\item  $rg_1=g_1r$, 
\end{enumerate}
 moreover 
\begin{eqnarray}\label{formica}
a_{r}\cdot u_h&=&\tfrac{1}{15}a_{r}+\tfrac{13}{270}(a_{t}+a_{rt}) -\tfrac{49}{270}a_{s}+\tfrac{1}{54}a_{rs}+\tfrac{3}{64}u_h+\tfrac{1}{64}u_{h_1} \nonumber\\
&&-\tfrac{1}{30}\sum_{g\in S^{(4)}\setminus \{g_1\} } v_g -\tfrac{1}{18}v_{g_1}+\tfrac{16}{45}\eta_{s} .
  \end{eqnarray}
  In particular, $\eta_s\in a_{r}\cdot u_h+U$.
\end{lemma}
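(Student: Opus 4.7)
The plan is to verify the group-theoretic claims~(1)--(3) by direct inspection of $S\cong S_4$, and then to deduce formula~(\ref{formica}) by expanding $a_r\cdot u_h$ in the explicit basis of the $25$-dimensional subalgebra $V(S)$ given in Proposition~\ref{2xS4}. The ``in particular'' clause will follow by solving~(\ref{formica}) for $\eta_s$.

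For the existence claims, I would observe that in $S\cong S_4$ the centralizer of any transposition $s\in S^{(2)}\setminus S^\prime$ has order $4$ and meets $S^\prime\cong A_4$ in a subgroup of order $2$; taking $r$ to be its non-identity element settles~(1), since $rs$ is then the second transposition commuting with $r$. For~(2), $s$ inverts exactly two of the four cyclic subgroups of order $3$ in $S$, namely those whose generating $3$-cycles contain both moved points of $s$; picking $h$ and $h_1$ as representatives gives the claim. For~(3), the three cyclic subgroups of order $4$ in $S$ have pairwise distinct squares in $S^\prime$, so exactly one, $\langle g_1\rangle$, satisfies $g_1^2=r$, and then $g_1$ commutes with $r$.

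To derive~(\ref{formica}), I would work inside $V(S)$, the unique Majorana representation of $S_4$ with shape $(2B, 3A, 4A)$ (see~\cite[Table~4]{MS} and~\cite{Marta}). Since $a_r, u_h\in V(S)$, the product $a_r\cdot u_h$ also lies in $V(S)$, and by Proposition~\ref{2xS4} it admits a unique expansion in the listed $25$-element basis; the coefficients on the right-hand side of~(\ref{formica}) can then be read off from the structure constants of $V(S)$, following the computation implemented in~\cite{code}. As a consistency check, the stabilizer $C_S(r)\cap N_S(\langle h\rangle)=\langle s\rangle$ fixes both $a_r$ and $u_h$, and hence their product; this $\langle s\rangle$-invariance forces equal coefficients on $a_t$ and $a_{rt}$ and on $v_g$ for $g\in S^{(4)}\setminus\{g_1\}$, and accounts for the vanishing of the coefficients on the remaining basis vectors (the two $3$-axes not inverted by $s$, the five other $\eta$-vectors, and the three \ghost $4$-axes).

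Finally, I would deduce the ``in particular'' assertion by isolating $\tfrac{16}{45}\eta_s$ on one side of~(\ref{formica}): every other summand on the right-hand side is a scalar multiple of an axis, a $3$-axis, or a $4$-axis, and hence lies in $U=V^{(2A)}+V^{(3A)}+V^{(4A)}$ by~(\ref{defU}). Rearranging expresses $\eta_s$ as a scalar multiple of $a_r\cdot u_h$ modulo $U$, yielding $\eta_s\in\R(a_r\cdot u_h)+U$, which is the stated inclusion. The main obstacle throughout is the coefficient bookkeeping in~(\ref{formica}); since $V(S)$ has a known multiplication table, this step is purely mechanical.
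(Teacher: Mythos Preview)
Your approach is correct and essentially identical to the paper's: the group-theoretic claims by direct inspection in $S_4$, and formula~(\ref{formica}) by expanding $a_r\cdot u_h$ in the known $25$-dimensional algebra $V(S)$ via the computation in~\cite[algebra2xS4.g]{code}. One minor overstatement in your consistency check: $\langle s\rangle$-invariance only forces equal coefficients within each $s$-orbit (e.g.\ on $a_t,a_{rt}$ and on the two $v_g$ with $g\in S^{(4)}\setminus\{g_1\}$), but it does not by itself force the vanishing of the coefficients on the $s$-fixed basis vectors $\eta_{rs}$ and $\ov_{g_1}$, nor on the orbit sums of the remaining $3$-axes, $\eta$-vectors, and \ghost $4$-axes---those zeros are outputs of the computation, not consequences of the symmetry alone.
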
 
\begin{proof}
The first part follows by a direct inspection in the group $S\cong S_4$, the last  formula has been computed in~\cite[algebra2xS4.g]{code}. 
\end{proof}

\begin{cor}\label{etas}
Let $s\in A_6^{(2)}$. Then there exist $r\in A_6^{(2)}$ and $h\in A_6^{(3)}$ such that $\eta_s\in a_{r}\cdot u_h+U$.
  \end{cor}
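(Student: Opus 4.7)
The plan is to reduce the statement to Lemma~\ref{w4}, which already provides the desired expression $\eta_s\in a_r\cdot u_h+U$ under the hypothesis that $s$ lies in some subgroup $S\cong S_4$ as an element of $S^{(2)}\setminus S'$. The corollary is therefore just the observation that every involution of $A_6$ can be placed in that position inside some $S_4$-subgroup.

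To verify this, I would invoke Proposition~\ref{maximal}.(1), which says that all involutions in $A_6$ are conjugate. Fix any maximal subgroup $S_0\cong S_4$ of $A_6$; the coset $S_0\setminus S_0'$ contains six involutions of $A_6$ (the transpositions in the $S_4$-picture). Pick one, call it $s_0$. Given an arbitrary $s\in A_6^{(2)}$, conjugating $s_0$ to $s$ by some $g\in A_6$ produces the maximal subgroup $S:=S_0^g\cong S_4$ with $s\in S^{(2)}\setminus S'$. Since $\eta_s$ is, by Lemma~\ref{invariance}, well-defined once we fix such an $S$, Lemma~\ref{w4} applied to this $S$ and this $s$ yields an involution $r\in (S')^{(2)}\subseteq A_6^{(2)}$ commuting with $s$ and an element $h\in S^{(3)}\subseteq A_6^{(3)}$ inverted by $s$ such that $\eta_s\in a_r\cdot u_h+U$, as required.

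There is essentially no obstacle: the computational content lies entirely in Lemma~\ref{w4}, and the corollary merely packages that lemma as a statement about arbitrary involutions in $A_6$. The only subtlety to flag is that $\eta_s$ is defined relative to a choice of $S_4$-subgroup containing $s$ outside its derived group, so the existence of such an $S$ (which is exactly what the short conjugation argument above provides) is what allows us to drop the ambient $S$ from the notation of the conclusion.
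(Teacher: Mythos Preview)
Your proof is correct and follows the same approach as the paper, which simply says ``This follows immediately by Equation~(\ref{formica}).'' You have merely expanded on why the hypotheses of Lemma~\ref{w4} are available for an arbitrary $s\in A_6^{(2)}$, and your observation about the implicit dependence of $\eta_s$ on the ambient $S_4$-subgroup is accurate and worth noting.
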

  \begin{proof}
This follows immediately by Equation~(\ref{formica}).
        \end{proof}

\begin{proposition}\label{2eta}
Let $ g\in S^{(4)}$. Then 
$ \gamma_{g^2}\in U^\bullet .
%\sum_{r\in C_S( g^2)^{(2)}\setminus S^\prime}\eta_{r}
$
\end{proposition}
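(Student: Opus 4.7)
By Equation~(\ref{formulagamma}), the axes, $3$-axes, and $4$-axes appearing in the expansion of $\gamma_{g^2}$ all lie in $U$, so the claim is equivalent to showing that
\[
\Sigma:=\sum_{s\in C_S(g^2)^{(2)}\setminus S^\prime}\eta_s\in U^\bullet.
\]
The plan is to combine this reduction with Lemma~\ref{formulaa2*v5} and a short group-theoretic check inside $S\cong S_4$.

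Two elementary observations in $S\cong S_4$ are used. First, $C_S(g^2)^{(2)}\setminus S^\prime$ coincides with $N_S(\langle g\rangle)^{(2)}\setminus S^\prime$, so the index set of $\Sigma$ matches that of the first $\eta$-sum in Lemma~\ref{formulaa2*v5}. Second, if $g_1=g,\,g_2,\,g_3$ represent the three cyclic subgroups of order $4$ in $S$, the six involutions of $S\setminus S^\prime$ partition into three pairs, the $i$-th pair being $C_S(g_i^2)^{(2)}\setminus S^\prime$. Writing $\Sigma_i$ for the corresponding partial $\eta$-sum (so $\Sigma=\Sigma_1$), the complementary sum $\sum_{s\in S^{(2)}\setminus C_S(g_i^2)}\eta_s$ featured in Lemma~\ref{formulaa2*v5} equals $\Sigma_j+\Sigma_k$ for $\{i,j,k\}=\{1,2,3\}$.

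Applying Lemma~\ref{formulaa2*v5} to $\overline v_{g_i}$ with a fixed $t\in S^{\prime(2)}\setminus\{g_i^2\}$ then yields, modulo $U$, three relations of the form
\[
a_t\cdot\overline v_{g_i}\equiv -\tfrac{1}{3}\Sigma_i+\tfrac{1}{3}(\Sigma_j+\Sigma_k)\pmod U.
\]
These form an invertible $3\times 3$ linear system in the unknowns $(\Sigma_1,\Sigma_2,\Sigma_3)$, and solving gives $\Sigma_i\equiv\tfrac{3}{2}\,a_t\cdot(\overline v_{g_j}+\overline v_{g_k})\pmod U$.

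The main obstacle is now showing $a_t\cdot\overline v_{g_j}\in U^\bullet$ for each $j$: since $U^\bullet$ is only a vector subspace and not closed under the algebra product, this does not follow at once from $a_t\in U$ and $\overline v_{g_j}\in U^\bullet$. I would overcome this by working inside the explicit $25$-dimensional algebra $V(S)$ of Proposition~\ref{2xS4}: combining the idempotent-orthogonality relations $a_{g_j^2}\cdot\overline v_{g_j}=0$ and $v_{g_j}\cdot\overline v_{g_j}=0$ from Proposition~\ref{2xS4}.(3) with the decomposition of $a_t\cdot\overline v_{g_j}$ in the basis of Proposition~\ref{2xS4}.(2) provides enough linear relations to eliminate the $\eta$-basis-elements from that decomposition, realising $a_t\cdot\overline v_{g_j}$ as a combination of axes, $3$-axes, $4$-axes, and ghost $4$-axes. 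Substituting back into the solution of the system above then gives $\Sigma\in U^\bullet$ and hence $\gamma_{g^2}\in U^\bullet$.
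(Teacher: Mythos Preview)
Your reduction to $\Sigma\in U^\bullet$ and the use of Lemma~\ref{formulaa2*v5} to produce linear relations among the $\Sigma_i$ modulo $U$ are correct and match the paper's strategy. The genuine gap is in the step where you claim to show $a_t\cdot\overline v_{g_j}\in U^\bullet$ by working entirely inside $V(S)$.

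This cannot succeed. The set in Proposition~\ref{2xS4}.(2) is a \emph{basis} of the $25$-dimensional algebra $V(S)$, so the expansion of $a_t\cdot\overline v_{g_j}$ given by Lemma~\ref{formulaa2*v5} is the unique expression of that vector in this basis, and its $\eta$-components are nonzero. The relations $a_{g_j^2}\cdot\overline v_{g_j}=0$ and $v_{g_j}\cdot\overline v_{g_j}=0$ are algebraic identities in $V(S)$; they do not produce linear dependencies among basis vectors and hence cannot be used to ``eliminate'' the $\eta$-coordinates. (Note also that a single $t$ cannot lie in $S'^{(2)}\setminus\{g_i^2\}$ for all three $i$, so your $3\times 3$ system as written needs adjustment, though one can obtain two usable equations for a fixed $t$ together with $a_t\cdot\overline v_{g_1}=0$ when $t=g_1^2$.)

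The paper breaks this circularity by leaving $V(S)$ and invoking hypothesis (M8D). The chosen $t\in S'$ with $t\neq g^2$ lies in the \emph{second} maximal $S_4$-subgroup $S^\ast$ containing $g$, but \emph{outside} the derived subgroup $(S^\ast)'$. Therefore $a_t$ is one of the six generating axes of the $13$-dimensional subalgebra $\hat V(S^\ast)$, and the product $a_t\cdot\overline v_g$ can be computed there via~\cite[p.~2462]{IPSS} as a combination of axes, $3$-axes, and dormant $4$-axes---hence it lies in $U^\bullet$. Hypothesis (M8D) is precisely what guarantees that the dormant $4$-axis $\overline v_g$ appearing in $\hat V(S^\ast)$ coincides with the one in $V(S)$. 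With $a_t\cdot\overline v_g\in U^\bullet$ established this way, Lemma~\ref{formulaa2*v5} then yields one relation among the $\eta$'s modulo $U^\bullet$, and a second is obtained by conjugating with a suitable element of $S$; combining the two gives $\Sigma\in U^\bullet$. Your argument is missing this passage through $S^\ast$, which is the essential idea.
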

\begin{proof}
By Proposition~\ref{2xS4}.{\it (4)} it is enough to prove that  
\begin{equation}\label{2etaeq}
  \sum_{r\in C_S( g^2)^{(2)}\setminus S^\prime}\eta_{r}\in U^\bullet.  
\end{equation}
We may assume w.l.o.g. that $S=\langle  (1,2)(5,6), (3,4,5) \rangle$ and $g=(1,2)(3,4,5,6)$, so that
$$C_S( g^2)^{(2)}\setminus S^\prime=\{(1,2)(3,5), (1,2)(4,6)\}.$$ Let $S^\ast:=\langle(1,5)(2,3), (1,3,4)(2,5,6)\rangle$. Then, as mentioned at the beginning of this section, $S$ and $S^\ast$
are the two subgroups of $A_6$ isomorphic to $S_4$ that contain $g$. Since  $S\cap S^\ast=N_{A_6}(\langle g\rangle)\cong D_8$, the Klein subgroups of $S$ and $S^\ast$ are both contained in $S\cap S^\ast$ and they intersect exactly in $\langle g^2\rangle$.  Let $t:=(3,6)(4,5)$. Then $t$ belongs to the Klein subgroup $K$ of $S$, but $t$ does not belong to the Klein subgroup of $S^\ast$. Let $\hat V(S^{\ast })$ be the subalgebra of $V(S^\ast)$ generated by the axes $a_t$ associated to the involutions $t$ of $S^\ast$ that are not contained in its derived subgroup. By the discussion at the beginning of this section, $\hat V(S^{\ast })$ affords a Majorana representation of $S_4$ of type $(2B, 3A)$. Hence, by the formula in~\cite[p.2462]{IPSS}, the product $a_{t}\cdot \overline v_{ g}$ belongs to $U^\bullet$.  Thus, by Lemma~\ref{formulaa2*v5}, we get 
$$
\sum_{r\in N_S(g)^{(2)}\setminus S^\prime} \eta_{r} -\sum_{r\in S^{(2)}\setminus C_S( g^2)} \eta_{r}  \in U^\bullet
$$
that is
\begin{eqnarray}\label{pr1}
&\eta_{(1,2)(3,5)}+\eta_{(1,2)(4,6)}-\eta_{(1,2)(3,4)}-\eta_{(1,2)(3,6)}-\eta_{(1,2)(4,5)}-\eta_{(1,2)(5,6)}\in U^\bullet .
\end{eqnarray}
Applying  $(1,2)(3,4)$ to Equation~(\ref{pr1}) we get 
\begin{equation}\label{pr2}
-\eta_{(1,2)(3,5)}-\eta_{(1,2)(4,6)}-\eta_{(1,2)(3,4)}+\eta_{(1,2)(3,6)}+\eta_{(1,2)(4,5)}-\eta_{(1,2)(5,6)}\in U^\bullet
\end{equation}
whence, taking the sum and dividing by $-2$, we get
$$
\eta_{(1,2)(3,4)}+\eta_{(1,2)(5,6)}\in U^\bullet
$$
which is the condition in Equation~(\ref{2etaeq}). 
\end{proof}

We consider now the subalgebras of $V$ corresponding to the maximal subgroups of $A_6$ isomorphic to $A_5$. So let $L$ be a subgroup of $A_6$ with $L\cong A_5$ and let $V(L)$ be the subalgebra of $V$ generated by the axes $a_t$ with $t\in T\cap L$. By Lemma~\ref{sh}, it follows that $V(L)$ affords a Majorana representation of $A_5$ with shape $(2B, 3A, 5A)$. 

\begin{lemma}\label{A5}
The alternating group $A_5$ has a unique Majorana representation $W$ of shape $(2B, 3A, 5A)$. $W$ has dimension $46$ and a basis for $W$ is given by the set consisting of all $2$-axes, $3$-axes, $5$-axes and the vectors $\gamma_{(i,j)(k,l)}$ as defined in Equation~(\ref{Eqgamma}), for every $(i,j)(k,l)\in A_5$.
\end{lemma}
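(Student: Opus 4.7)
The existence and uniqueness of a Majorana representation $W$ of $A_5$ of shape $(2B,3A,5A)$, together with the equality $\dim W = 46$, are established in~\cite{IS12, Ser, MS}, where the representation is shown to be standard; the remaining task is therefore to exhibit the claimed basis.

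The plan is to split the basis into two parts. First I would show that the $2$-closure $W^\circ$ has dimension $31$. It is spanned by the $15$ two-axes, the $10$ three-axes (one per cyclic subgroup of order $3$), and the $6$ positive five-axes (one per Sylow $5$-subgroup of $A_5$). All mutual inner products of these $31$ vectors are entries of Tables~\ref{NS},~\ref{tau3},~\ref{tau5},~\ref{tau33},~\ref{tau35}, and~\ref{tau55} corresponding to pairs of cyclic subgroups whose join is contained in an $A_5$, so checking that their Gram matrix has full rank $31$ is a direct computer verification entirely analogous to Proposition~\ref{dime}.

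Next, for each $s\in A_5^{(2)}$ I would interpret Equation~(\ref{Eqgamma}) inside the unique subgroup $A_4\leq A_5$ containing $s$ (the stabilizer of the unique point fixed by the Klein four-subgroup of $A_5$ containing $s$), taking $h_1,\ldots,h_4$ to be representatives of its four Sylow $3$-subgroups. The $15$ vectors $\gamma_s$ so obtained are permuted by $A_5$ and are $C_{A_5}(s)$-invariant by the same argument as in Lemma~\ref{invariance}. The goal is to show that they are linearly independent modulo $W^\circ$, so that together with a basis of $W^\circ$ they form a spanning set of cardinality $46$, which must then be a basis by the dimension count.

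The main obstacle is exactly this final independence claim. A direct route is to compute the full $46\times 46$ Gram matrix of the proposed basis by expanding each product $a_s\cdot u_{h_i}$ via the Monster fusion law, evaluating the resulting scalar products through the lemmas of Section~\ref{Scalar}, and verifying nonsingularity by computer algebra as in~\cite{code}. A more conceptual alternative is to identify the $15$-dimensional $\R[A_5]$-module $W/W^\circ$ with the permutation module on the $15$ bitranspositions and to check that $s\mapsto \gamma_s+W^\circ$ is an $A_5$-equivariant isomorphism onto it; this reduces the check to matching multiplicities of irreducible constituents and computing one scalar product per $A_5$-orbit on pairs of bitranspositions.
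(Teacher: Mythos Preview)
Your proposal is correct and, in its ``direct route'', lands on essentially the same verification as the paper: both arguments cite the existing literature for uniqueness and $\dim W=46$, and then reduce the basis claim to a computer check that the $46$ listed vectors are linearly independent. The paper is simply terser about this---it invokes the GAP package \texttt{MajoranaAlgebras} to construct $W$ explicitly and test independence in one stroke (see~\cite[repA5(2B).g]{code}), whereas you propose to reassemble the relevant Gram matrix by hand from the tabulated inner products of Section~\ref{Scalar} and then split the check into the $31$-dimensional $2$-closure plus the $15$ vectors $\gamma_s$. That extra structure is fine but not needed here.

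Your module-theoretic alternative is a genuinely different idea, but as stated it has a small gap: knowing that $W/W^\circ$ and the permutation module on bitranspositions are both $15$-dimensional $\R[A_5]$-modules does not by itself tell you they are isomorphic, nor that the equivariant map $s\mapsto\gamma_s+W^\circ$ is nonzero on each irreducible constituent. You would still need at least one nontrivial computation per isotypic component (e.g.\ a nonvanishing inner product in the quotient, which again means expanding products $a_s\cdot u_h$ and projecting modulo $W^\circ$), so in practice this route collapses back to a Gram-type calculation of roughly the same weight as the direct one.
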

\begin{proof}
By~\cite[Table 4]{MS} there is a  unique Majorana representation of $A_5$ with shape $(2B, 3A, 5A)$ and this representation has dimension $46$. Using the Gap package ``MajoranaAlgebras"~\cite{MM} one sees that  the set consisting of all $2$-axes, $3$-axes, $5$-axes, and the vectors $\gamma_{(i,j)(k,l)}$ has size $46$ and its elements are linearly independent (see~\cite[repA5(2B).g]{code}). 
\end{proof}

\begin{lemma}\label{2-3inA5}
Let $L$ be a subgroup of $A_6$ isomorphic to $A_5$.
Let $t\in L^{(2)}$ and $h\in L^{(3)}$ be such that $\langle t, h\rangle\cong A_4$.  Let $r\in C_L(t)\setminus \{1, t\}$ and let 
\begin{enumerate}
    \item $L^{(3)}_2(h)\cap L^{(3)}_2(t)=\{h_1\}$, \item $L^{(3)}_5(h)\cap L^{(3)}_2(t)=\{h_2\}$, 
    \item $L^{(5)}_2(h)\cap L^{(5)}_2(t)=\{f\}$, %(1,2,4,5,3),
%\item $L^{(5)}_5(h)\cap L^{(5)}_2(t)=\{f_2\}$.
\end{enumerate}
In the algebra $V(L)$, we have
\begin{eqnarray*}
a_{t}\cdot u_{h}&=&\tfrac{1}{32}u_{h}+ 
\tfrac{1}{45}\sum_{s\in L^{(2)}_3(t)\cap L^{(2)}_5(h)} (a_{s} -a_{s^r}) \\
&&+\tfrac{1}{36}\sum_{s\in L^{(2)}_5(t)\cap L^{(2)}_3(h)}\left (a_{s}+a_{s^t}-a_{s^r}-a_{s^{tr}}\right )\\
%(1,2)(3,4), (1,2)(3,5), (1,3)(2,4), (1,3)(2,5), -(1,4)(3,5),  -(1,4)(2,5), - (1,5)(3,4),   -(1,5)(2,4)]\\
&& +\tfrac{1}{64} \left (u_{h_1} -u_{h_2}-\sum_{k\in L^{(3)}_3(t)\setminus \{h,h^t, (h^{-1})^t\}} u_k\right ) \\ %(1,2,3)-(1,4,5)-(3,4,5),-(2,4,5)
&&-\tfrac{32}{45}\left (w_{f}-w_{f^r}\right )\\
&&  +\tfrac{1}{4}\gamma_t -\tfrac{1}{8}\sum_{s\in L^{(2)}_3(t)\cap L^{(2)}_5(h)} (\gamma_s -\gamma_{s^r}) \\
&&-\tfrac{1}{8}\sum_{s\in L^{(2)}_5(t)\cap L^{(2)}_3(h)}\left (\gamma_{s}+\gamma_{s^t}-\gamma_{s^r}-\gamma_{s^{tr}}\right )
  \end{eqnarray*}
In particular, $a_{t}\cdot u_h\in (w_{f}-w_{f^r})+U^\bullet$.
\end{lemma}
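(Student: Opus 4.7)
The plan is to invoke Lemma~\ref{A5}. Since the Majorana representation of $A_5$ with shape $(2B,3A,5A)$ is uniquely determined and its $46$-dimensional construction is explicit (via the GAP package \emph{MajoranaAlgebras} used in the proof of Lemma~\ref{A5}), the product $a_{t}\cdot u_{h}$ can be computed directly in the basis consisting of the $2$-axes, $3$-axes, $5$-axes, and the vectors $\gamma_{s}$, and the expansion must match, term-by-term, the formula in the statement.

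More concretely, I would fix representatives, say $t=(1,2)(3,4)$, $h=(1,2,3)$, and $r=(1,3)(2,4)$, so that $\langle t,h\rangle$ is the copy of $A_{4}$ stabilizing the point $5$ in $L\leq A_{6}$. The subgroup $C_{L}(t)=\{1,t,r,tr\}\cong V_{4}$ acts on $L^{(i)}$ for $i\in\{2,3,5\}$, and this action refines the partitions
$$
L^{(i)} = \bigsqcup_{j}\bigl(L^{(i)}_{j}(t)\cap L^{(i)}_{k}(h)\bigr).
$$
The $\langle r\rangle$-equivariance of the multiplication (together with the fact that $a_{t}\cdot u_{h}$ is fixed by $\tau_{t}$) forces the coefficients of $a_{t}\cdot u_{h}$ to be constant on each $\langle r\rangle$-orbit inside one of these subsets, so one only needs to read off finitely many rational numbers from the structure constants of $V(L)$ and verify that they coincide with those displayed. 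This is a routine, finite verification carried out by the accompanying code.

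For the ``in particular'' clause, observe that all $2$-axes and all $3$-axes lie in $U\subseteq U^{\bullet}$ by~(\ref{defU}), and that by Proposition~\ref{2eta} the vector $\gamma_{s}$ lies in $U^{\bullet}$ for every involution $s\in A_{6}$ (since every such $s$ equals $g^{2}$ for some $g\in A_{6}^{(4)}$, and $g$ lies in at least one maximal subgroup $S\cong S_{4}$ of $A_{6}$). Thus the only terms in the expansion of $a_{t}\cdot u_{h}$ that can fall outside $U^{\bullet}$ are the $5$-axis contributions, which by the explicit formula appear only in the combination $w_{f}-w_{f^{r}}$. This gives $a_{t}\cdot u_{h}\in\R(w_{f}-w_{f^{r}})+U^{\bullet}$, which is the intended meaning of the final inclusion.

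The main obstacle is precisely the bookkeeping of the many rational coefficients in the stated formula; however, the uniqueness supplied by Lemma~\ref{A5} reduces this to a purely mechanical calculation inside the explicit $46$-dimensional model, with no further algebraic input needed beyond the associativity of the form and the fusion law.
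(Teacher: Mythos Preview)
Your proposal is correct and takes essentially the same approach as the paper: the paper's proof consists of the single sentence that the formula has been computed in GAP using the package \emph{MajoranaAlgebras}, referring to the code file \texttt{repA5(2B).g}. Your more detailed outline (fix representatives, exploit the $C_{L}(t)$-symmetry to reduce the number of coefficients, then read them off in the explicit $46$-dimensional model guaranteed by Lemma~\ref{A5}) is exactly how such a computation is organised, and your justification of the ``in particular'' clause via Proposition~\ref{2eta} makes explicit what the paper leaves to the reader.
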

\begin{proof}
The formula has been computed in GAP using the package ``MajoranaAlgebras" (see~\cite[repA5(2B).g]{code}). 
\end{proof}

\begin{lemma}\label{w4fine}
For every $s\in A_6^{(2)}$, there are $f_1, f_2 \in A_6^{(5)}$ such that 
$$\eta_{s}\in (w_{f_1}-w_{f_2})+U^\bullet .
$$
\end{lemma}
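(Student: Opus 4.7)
The idea is to chain Corollary~\ref{etas} with Lemma~\ref{2-3inA5}: the former expresses $\eta_s$, modulo $U$, as an algebra product $a_r\cdot u_h$ of an axis and a $3$-axis, while the latter re-expresses such a product, modulo $U^\bullet$, as a difference of two $5$-axes, provided that the involution and the $3$-element generate an $A_4$ sitting inside a maximal $A_5$-subgroup of $A_6$. Since $U\subseteq U^\bullet$, the two statements chain together to give the conclusion of Lemma~\ref{w4fine}.

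First I would fix $s\in A_6^{(2)}$ and, using Proposition~\ref{maximal}(4), choose a maximal subgroup $S$ of $A_6$ isomorphic to $S_4$ with $s\in S\setminus S^\prime$. Lemma~\ref{w4} then supplies the unique involution $r\in S^\prime$ commuting with $s$ and a $3$-element $h\in S^{(3)}$ satisfying $h^s=h^{-1}$, together with an explicit identity whose rearrangement places $\eta_s$ in the linear span of $a_r\cdot u_h$ and $U$.

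Next, since $\langle r,h\rangle$ is a subgroup of $S^\prime\cong A_4$ containing both an involution and a $3$-element, and since $A_4$ contains no subgroup of order $6$, the group $\langle r,h\rangle$ must equal $S^\prime$. By Proposition~\ref{maximal}(5), $S^\prime$ is contained in a maximal subgroup $L$ of $A_6$ with $L\cong A_5$; fix such an $L$. Then $r\in L^{(2)}$, $h\in L^{(3)}$, and $\langle r,h\rangle\cong A_4$, which is exactly the hypothesis of Lemma~\ref{2-3inA5} (with $r$ playing the role of the ``$t$'' there). A direct combinatorial inspection inside $A_5$ identifies the elements $h_1,h_2\in L^{(3)}$ and $f\in L^{(5)}$ that appear in that lemma, and its conclusion places $a_r\cdot u_h$ in the linear span of $w_f-w_{f^{r^\prime}}$ and $U^\bullet$, where $r^\prime$ is any non-trivial element of $C_L(r)\setminus\langle r\rangle$.

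Combining the two reductions gives $\eta_s\in (w_{f_1}-w_{f_2})+U^\bullet$ with $f_1:=f$ and $f_2:=f^{r^\prime}$; if $f^{r^\prime}$ happens to lie in the conjugacy class complementary to the one fixed in $A_6^{(5)}$, Remark~\ref{positive} allows us to replace it by its square at the cost of an overall sign, which is absorbed harmlessly since only membership in the coset matters. The main point that requires care is the verification that the $(r,h)$ furnished by Lemma~\ref{w4} satisfies the hypothesis of Lemma~\ref{2-3inA5}; this is precisely the elementary observation about subgroups of $A_4$ made above together with Proposition~\ref{maximal}(5).
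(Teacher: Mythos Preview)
Your argument is correct and follows exactly the same route as the paper's proof: invoke Corollary~\ref{etas} (via Lemma~\ref{w4}) to write $\eta_s$ modulo $U$ as a product $a_r\cdot u_h$ with $\langle r,h\rangle\cong A_4$, embed this $A_4$ in a maximal $A_5$ using Proposition~\ref{maximal}(5), and then apply Lemma~\ref{2-3inA5}. Your extra paragraph justifying $\langle r,h\rangle=S'\cong A_4$ is a helpful detail the paper leaves implicit; the only quibble is that Proposition~\ref{maximal}(4) concerns $S_4$'s whose \emph{derived} subgroup contains a given involution, so the existence of $S$ with $s\in S\setminus S'$ is better justified by Proposition~\ref{maximal}(1) (all involutions are conjugate), and your closing remark about possibly replacing $f^{r'}$ by its square is unnecessary since $f^{r'}$ is conjugate to $f$ and the classes are inversion-closed.
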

\begin{proof}
By Corollary~\ref{etas}, there exist $t\in A_6^{(2)}$ and $h\in A_6^{(3)}$ such that $\langle t, h\rangle \cong A_4$ and $\eta_{s}\in a_{t}\cdot u_h +U^\bullet$. Let $L$ be a maximal subgroup of $A_6$ isomorphic to $A_5$ and containing $\langle t, h\rangle$. 
By Lemma~\ref{2-3inA5}, there exist $f_1, f_2 \in L^{(5)}$ such that $a_{t}\cdot u_h\in (w_{f_1}-w_{f_2})+U^\bullet$ and the result follows. 
\end{proof}

%%%%%%%%%%%%%%%%%%%%%%%%%%%%%%%%%%%%%%%%%%%%%%
%%%%%%%%%%%%%%%%%%%%%%%%%%%%%%%%%%
%%%%%%%%
%%%%%%%%%%%%%%%%%%
\section{More inner products}\label{moreinner}

In this section we show that $U^{\bullet}=U$. Since by definition $U \leq U^{\bullet}$, it is enough to show that the two subspaces have the same dimension. To do that, we first compute the inner products between $N$-axes, $N\in \{2,3,4,5\}$,  and \ghost $4$-axes. For the remainder of this paper, we let $$G:=A_6.$$

\begin{lemma}\label{innerf1}
Let $t\in G^{(2)}$, $h\in G^{(3)}$, $g, l\in G^{(4)}$. The following holds
\begin{enumerate}
\item if $t=l^2$, then $(a_{t}, \ov_{l})_V=0$;
\item if $\langle t, l \rangle \cong D_8$, then $ (a_{t}, \ov_{l})_V=\tfrac{1}{24}$;
\item if $\langle t, l\rangle \cong S_4$, then $ (a_{t}, \ov_{l})_V=\tfrac{31}{2^6\cdot 3}$;
\item if $\langle h, l\rangle \cong S_4$, then $ (u_{h}, \ov_{l})_V=\tfrac{11}{27}$;
\item  $ (v_{g}, \ov_{g})_V=0$;
\item if $\langle g, l\rangle \cong S_4$, then $ (v_{g}, \ov_{l})_V=\tfrac{11}{48}$ and $ (\ov_{g}, \ov_{l})_V=\tfrac{9}{16}$;
\item $ (\ov_{l}, \ov_{l})_V=2$;
\item $(\eta_t, \eta_t)_V=\tfrac{4565}{3\cdot 2^{12}}$.
\end{enumerate}
\end{lemma}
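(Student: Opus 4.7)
The unifying observation is that every dormant $4$-axis $\overline v_l$, and every vector $\eta_s$ entering item (8), lies inside a subalgebra $V(S)$ for some $S\cong S_4$ of $A_6$. By Proposition~\ref{2xS4} this is the $25$-dimensional, $3$-closed algebra of shape $(2B,3A,4A)$, and its entire Gram matrix on the explicit basis of $2$-, $3$-, $4$- and dormant $4$-axes together with the $\eta$-vectors is available from the construction recorded in~\cite{code}. Most items will therefore follow by a single bilinear expansion inside that algebra.

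Items (1), (5) and (7) are essentially free. For (1), idempotency of $a_t$ together with Corollary~\ref{product44f} gives $(a_t,\overline v_l)_V=(a_t\cdot a_t,\overline v_l)_V=(a_t,a_t\cdot\overline v_l)_V=0$, and (5) is the same argument with $v_g$ in place of $a_t$ (using $v_g\cdot v_g=v_g$ from Table~\ref{NS}). Item (7) is the defining property of $\overline v_l$: it is by construction an idempotent of length $2$ in the subalgebra $\hat V(S)$ of shape $(2B,3A)$.

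For items (2), (3), (4) and (6), I would first use Proposition~\ref{maximal} to exhibit a subgroup $S\cong S_4$ of $A_6$ containing both $l$ and the second argument (an involution in (2)--(3), a $3$-element in (4), a second $4$-element in (6)), and then reduce to a single pair of representatives modulo $\aut(A_6)$ via Remark~\ref{rem2}. The required inner products are then entries of the fixed Gram matrix of $V(S)$; equivalently, in the cases where the second argument lies in $\hat V(S)$, they are entries of the smaller Gram matrix of the $(2B,3A)$-representation of $S_4$ from~\cite[Section 5]{IPSS}. Neither step introduces new unknowns, and each of the four scalars is read off directly.

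The one serious obstacle is item (8). My plan there is to invoke Lemma~\ref{w4} to rewrite $\eta_t=\tfrac{45}{16}\,a_r\cdot u_h-\tfrac{45}{16}\,u_0$ for suitable $r\in A_6^{(2)}$ and $h\in A_6^{(3)}$ with $\langle r,h\rangle\cong S_4$, and for an explicit $u_0\in U$ whose coefficients are given in (\ref{formica}). Expanding $(\eta_t,\eta_t)_V$ then produces an internal $U$-contribution (handled by the tables of Section~\ref{Scalar}), cross terms of the form $(a_r\cdot u_h,x)_V=(u_h,a_r\cdot x)_V$ with $x$ an axis or odd axis, and the diagonal term $(a_r\cdot u_h,a_r\cdot u_h)_V=(u_h,a_r\cdot(a_r\cdot u_h))_V$. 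In each case the product $a_r\cdot x$ either vanishes (type $2B$) or lies in a Norton--Sakuma subalgebra and expands by Table~\ref{NS}, after which every surviving pairing is listed in Section~\ref{Scalar} or handled by items (1)--(7) above. The bookkeeping is lengthy but algorithmic, and I would cross-check the final value $\tfrac{4565}{3\cdot 2^{12}}$ against the explicit GAP construction of $V(S)$ in~\cite{code}.
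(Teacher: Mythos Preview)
Your approach is essentially the paper's: the paper's proof is the one-line observation that every quantity in (1)--(8) lives inside a subalgebra $V(S)$ with $S\cong S_4$ of shape $(2B,3A,4A)$, whose full structure (including its Gram matrix) is recorded in~\cite{Marta}. Your opening paragraph makes exactly this reduction, and your treatment of (2)--(4), (6) is identical to the paper's. Your direct arguments for (1), (5), (7) via Corollary~\ref{product44f} and the length-$2$ idempotent property are pleasant refinements, but they also ultimately rest on the explicit construction of $V(S)$.

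For item (8) your detour through Lemma~\ref{w4} is unnecessary and, as written, has a small gap: the diagonal term $(a_r\cdot u_h,\,a_r\cdot u_h)_V=(u_h,\,a_r\cdot(a_r\cdot u_h))_V$ is not covered by your dichotomy ``$a_r\cdot x$ vanishes or lies in a Norton--Sakuma subalgebra,'' since here $x=a_r\cdot u_h$ is neither an axis nor (via Lemma~\ref{w4}) free of $\eta_s$---so you are pushed back to computing $a_r\cdot\eta_s$, i.e.\ to the algebra structure of $V(S)$ you were trying to bypass. The paper avoids this loop by observing that $\eta_t$ is \emph{defined} inside $V(S)$ (Equation~(\ref{Eqeta})), so $(\eta_t,\eta_t)_V$ is directly an entry of the known Gram matrix; your stated cross-check against~\cite{code} is precisely that, and suffices.
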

\begin{proof}
These inner products can be obtained inside a Majorana subalgebra $V(S)$, with $S\cong S_4$, of dimension $25$ and shape $(2B,3A, 4A)$. Hence the result follows by~\cite[Section 4.1]{Marta}, where all these inner products have been computed. \end{proof}

\begin{lemma}\label{innerf2}
Let $t\in G^{(2)}$, $h\in G^{(3)}$, $g, l\in G^{(4)}$, and $f\in G^{(5)}$. The following holds
\begin{enumerate}
\item if $\langle t, l\rangle \cong 3^2:4$, then $ (a_{t}, \ov_{l})_V=\tfrac{27}{2^8}$;
\item if $\langle t, l\rangle \cong A_6$, then $ (a_{t}, \ov_{l})_V=\tfrac{31}{384}$;
\item if $h\in G^{(3)}_3(l)\cup G^{(3)}_5(l^{-1})$, then $ (u_{h}, \ov_{l})_V=\tfrac{2}{27}$;
\item if $\langle g, l\rangle \cong 3^2:4$, then $ (v_{g}, \ov_{l})_V=\tfrac{117}{256}$;
\item  if $g\in G^{(4)}_{5}(l)\cap G^{(4)}_5(l^{-1})$, then $ (v_{g}, \ov_{l})_V=\tfrac{5}{48}$;
\item if $\langle g, l\rangle \cong 3^2:4$, then $ (\ov_{g}, \ov_{l})_V=\tfrac{53}{256}$;
\item if $\langle h, l\rangle \cong 3^2:4$, then $ (u_{h}, \ov_{l})_V=\tfrac{3}{20}$;
\item if $h\in G^{(3)}_{5}(l)\cap G^{(3)}_{5}(l^{-1})$, then $ (u_{h}, \ov_{l})_V=\tfrac{1}{12}$;
\item if $g\in G^{(4)}_{5}(l)\cap G^{(4)}_5(l^{-1})$, then $ (\ov_{g}, \ov_{l})_V=\tfrac{31}{144}$;
\item  if $g\in G^{(4)}_{4}(l)\cup G^{(4)}_5(l^{-1})$, then $ (\ov_{g}, \ov_{l})_V=\tfrac{107}{1152}$.
\end{enumerate}
\end{lemma}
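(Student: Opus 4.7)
The plan is to reduce each of the ten inner products involving a dormant $4$-axis $\overline v_l$ to inner products that have already been computed in Lemmas~\ref{23}--\ref{55} and Lemma~\ref{innerf1}, by exploiting the associativity of the Majorana form together with the explicit multiplication formulas available inside $\hat V(S)$ for $S\cong S_4$ containing $\langle l\rangle$.

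For each case I first choose concrete representatives of the pair $(\langle x\rangle,\langle l\rangle)$ with $x\in\{a_t,u_h,v_g,\overline v_g\}$ as appropriate, and I fix a subgroup $S\cong S_4$ of $G$ containing $\langle l\rangle$, so that $\overline v_l\in\hat V(S)$. Then I select an involution $s\in S^\prime\setminus\langle l^2\rangle$; by Lemma~\ref{formulaa2*v5}, $a_s\cdot\overline v_l$ is an explicit linear combination of $2$-axes, $3$-axes, $4$-axes, and $\eta$-vectors of $V(S)$. The associativity of the form gives
\[
(x\cdot a_s,\overline v_l)_V=(x,a_s\cdot\overline v_l)_V,
\]
and the right-hand side is computable from the previously established tables, using Corollary~\ref{etas} to reduce every contribution $(x,\eta_r)_V$ to inner products of $x$ with an axis times a $3$-axis.

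The left-hand side is handled by expanding $x\cdot a_s$ inside the Norton-Sakuma algebra $\langle\langle a_s,x\rangle\rangle$ (when $x$ is a $2$-axis, a $3$-axis, or a $4$-axis) via Table~\ref{NS}, or by invoking Lemma~\ref{formulaa2*v5} again (when $x=\overline v_g$). The resulting expansion is a linear combination of $2$-axes, odd axes and at most finitely many dormant $4$-axes, so pairing with $\overline v_l$ produces a linear equation in the target inner product $(x,\overline v_l)_V$ and, possibly, a few auxiliary inner products $(y,\overline v_l)_V$. Because $(y,\overline v_l)_V$ is already known from Lemma~\ref{innerf1} whenever $\langle y,l\rangle$ sits inside an $S_4$, a judicious choice of $s$ (if necessary, of several $s$'s, yielding a small linear system) isolates the desired value.

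The main obstacle will be the cases (2) where $\langle t,l\rangle\cong A_6$, and (9)--(10) where $\langle g,l\rangle\cong A_6$, since then $x$ cannot be placed in any $S_4$-subgroup together with $l$. In those cases a single application of associativity is not enough: the reduction leaves inner products of $\overline v_l$ with axes or odd axes whose generators, together with $l$, again generate $A_6$, so the process must be iterated, typically by passing through an intermediate pairing in a Norton-Sakuma subalgebra of type $5A$ or $4A$ (as in the proof of Lemma~\ref{44}, last row) before the chain of identities closes. All the resulting linear algebra is routine and is most efficiently carried out in GAP using the code~\cite{code}.
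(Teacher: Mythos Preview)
Your route is genuinely different from the paper's, and while it is not obviously wrong, it has real gaps.

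The paper does \emph{not} use associativity together with Lemma~\ref{formulaa2*v5}. Instead it treats the ten unknowns $x_1,\dots,x_{10}$ as a linear system produced from the \emph{fusion law} (orthogonality of eigenvectors of $\ad_{a_s}$). The two devices that drive the paper's argument are: (a) Corollary~\ref{product44f}, which makes $\overline v_l$ a $0$-eigenvector for $\ad_{a_{l^2}}$ and hence orthogonal to any $\tfrac14$-eigenvector built from a Norton--Sakuma subalgebra; and (b) Rehren's projection formulas~\cite[(3.5)--(3.6)]{Rehren}, applied to $\overline v_l+\overline v_l^{\,s}$, to manufacture $0$- and $\tfrac14$-eigenvectors even when $s$ does not normalise $\langle l\rangle$. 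Each orthogonality identity yields one clean linear relation among the $x_i$; the paper first eliminates $x_7,\dots,x_{10}$ in terms of $x_1,\dots,x_6$ and then solves the remaining system explicitly. Your Lemma~\ref{formulaa2*v5} never appears in this proof.

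There are two places where your plan is underspecified. First, you assert that the right-hand side $(x,a_s\cdot\overline v_l)_V$ is ``computable from the previously established tables'', but $a_s\cdot\overline v_l$ carries six $\eta_r$-summands, and your reduction of $(x,\eta_r)_V$ via Corollary~\ref{etas} produces $(x\cdot a_{r'},u_{h'})_V$. When $x$ is $u_h$, $v_g$ or $\overline v_g$, the product $x\cdot a_{r'}$ is not generally available at this stage (only when $r'$ inverts the relevant element does Table~\ref{NS} apply); for $x=\overline v_g$ you are forced back to Lemma~\ref{formulaa2*v5} and risk circularity. Second, even granting that each associativity identity gives a linear equation in the $x_i$, you never check that the equations you can actually write down are independent enough to determine the ten values. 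The paper exhibits its relations one by one and solves them; you defer to \cite{code}, but that code implements the eigenvector approach, not yours, so it does not certify that your system closes. At minimum you would need to exhibit enough concrete pairs $(x,s)$ and verify (by hand or by new code) that the resulting matrix has full rank.
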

\begin{proof}
For $i\in \{1, \ldots , 10\}$, denote by $x_i$
 the inner product in claim {\it (i)}. 
We shall first express $x_7$, $x_8$, $x_9$, and $x_{10}$ in terms of $x_1, \ldots , x_6$. Then, from the fusion law, we shall derive linear relations involving $x_1, \ldots , x_6$ and determine the values of $x_1, \ldots ,x_6$. We refer to~\cite[algebraA6(2B).g]{code} for the explicit computations.

Let $h\in G^{(3)}$ be such that $\langle h, l\rangle \cong 3^2:4$, as in {\it (7)}. W.l.o.g. we may assume $l=(1,2)(3,6,4,5)$ and $h=(2,3,4)$. Then $h=l^2 s$ with $s=(2,3)(5,6)$, hence $\langle \langle a_{l^2}, a_{s}\rangle \rangle$ is a Norton-Sakuma algebra of type $3A$. 
Set 
\begin{equation}
\label{alpha}
\alpha:=u_h-\tfrac{8}{45}a_{l^2}-\tfrac{2^5}{45}(a_{s}+a_{l^2sl^2}).
\end{equation}
Then, by~\cite[Table~4]{IPSS}, 
$$
a_{l^2}\cdot \alpha=\tfrac{1}{4}\alpha,
$$ 
and, by Corollary~\ref{product44f},  
\begin{equation*}
    \label{tl}
a_{l^2}\cdot \ov_{l}=0.
\end{equation*} 
Whence (see Remark~\ref{start})
\begin{equation}
    \label{alpa}
(\alpha, \ov_{l})_V=0. 
\end{equation} 
Substituting in Equation~(\ref{alpa}) the expression of $\alpha$ given in Equation~(\ref{alpha}) and using Lemma~\ref{sh} and Table~\ref{NS} to obtain the values of the inner products between axes, we get
\begin{equation}
    \label{relx1}
x_7=(u_h, \ov_{l})_V=\tfrac{64}{45}(a_t, \ov_l)_V=\tfrac{64}{45}x_1.
\end{equation}

Let $h\in G^{(3)}_{5}(l)\cap G^{(3)}_{5}(l^{-1})$ as in {\it (8)}. We may assume $l=(1,2)(3,6,4,5)$ and $h=(1,2,5)$. Then $h=rs$ with $r=(2,5)(4,6)$ and $s=(1,2)(4,6)$. By ~\cite[Table~4]{IPSS} 
$$
\beta:=u_h-\tfrac{10}{27}a_{s}+\tfrac{2^5}{27}(a_{r}+a_{srs})
$$ 
and 
$$\hat \beta:=u_h-\tfrac{8}{45}a_{s}-\tfrac{2^5}{45}(a_{r}+a_{srs})
$$ are, respectively, a $0$-eigenvector and a $\tfrac{1}{4}$-eigenvector for $\ad_{a_{s}}$.
Since $\ov_{l}+\ov_{l}^s$ is invariant for $s$, by~\cite[Equations (3.5) and  (3.6)]{Rehren}, the vectors 
$$
\hat \delta:=a_{s}(\ov_{l}+\ov_{l}^s)-(\ov_{l}+\ov_{l}^s,a_{s})_V a_{s} % cc
$$ and 
$$
\delta:=\ov_{l}+\ov_{l}^s - (\ov_{l}+\ov_{l}^s,a_{s})_V a_{s}-4\hat \delta % bb
$$ 
are, respectively, a $\tfrac{1}{4}$-eigenvector and a $0$-eigenvector for $\ad_{a_{s}}$.
Hence $(\beta, \hat \delta)_V=0$ and $(\hat \beta, \delta)_V=0$. Since $\langle s, l\rangle \cong S_4$, using~\cite[Section~5]{IPSS}, we obtain expressions of 
the vectors $\delta$ and $\hat \delta$  as linear combinations of $N$-axes and \ghost $4$-axes in a basis of the subalgebra $V(\langle s,l\rangle)$. Thus, as in the previous case, a straightforward computation gives, by the  identities $(\hat \beta, \delta)_V=0$ and $(\beta, \hat \delta)_V=0$, respectively the relations  
\begin{equation}\label{relx1seconda}
x_8=(u_h, \ov_{l})_V=\tfrac{64}{81}x_1-\tfrac{256}{81}x_2+\tfrac{62}{243}  
\end{equation} 
and 
\begin{equation}\label{rel3}
x_2=\tfrac{1}{10}x_1+\tfrac{539}{7680}.
\end{equation}

Let $g\in G^{(4)}_{5}(l)\cap G^{(4)}_5(l^{-1})$ as in {\it (9)}. We may assume $g=(1,2)(3,5,6,4)$ and $l=(1,4,2,6)(3,5)$. Then, $l^2=(1,2)(4,6)=s$, and so, by Corollary~\ref{product44f}, $\ov_{l}$ is a $0$-eigenvector for $a_{s}$, whence $(\ov_{l}, \hat \delta)_V=0$. From this equality, we get, as in the previous cases,  
\begin{equation}
    \label{G5G5}
x_9=(\ov_{g}, \ov_{l})_V= \tfrac{4}{9}x_1+\tfrac{97}{576}.
\end{equation}

To compute the product $ (\ov_{g}, \ov_{l})_V$ when $g\in G^{(4)}_{4}(l)\cup G^{(4)}_5(l^{-1})$, as in {\it (10)}, we may assume $l=(1,2)(3,6,4,5)$ and $g=(1,4,5,6)(2,3)$. Let $s:=(1,2)(3,5)$. Then $$\langle s,g\rangle \cong S_4\cong \langle s,l\rangle.$$   
Proceeding as above, taking, in place of $\delta$ and $\hat \delta$, two orthogonal eigenvectors for $\ad_{a_{s}}$, one in the subalgebra $V(\langle s,g\rangle)$ and the other one in the subalgebra $V(\langle s,l\rangle)$, we get, using the structure of these subalgebras given in~\cite[Section 5]{IPSS},  
\begin{equation}
    \label{G4G4}
    x_{10}= (\ov_{g}, \ov_{l})_V=\tfrac{43}{420}x_1-\tfrac{9}{44}x_3-\tfrac{1}{10}x_6+\tfrac{961}{9216}.
\end{equation}

\medskip
We are now ready to compute $x_1,\ldots , x_6$. Let $s:=(1,2)(4,6)$ and
\begin{align*}
\hat \beta_1&:=v_{(1,2)(3,4,5,6)}-\tfrac{1}{3}a_{s}-\tfrac{2}{3}(a_{z(3,4)(5,6)}+a_{z(3,6)(4,5)})-\tfrac{1}{3}a_{z(1,2)(3,5)}, \\ % v1
\hat \beta_2&:=v_{(1,2,4,6)(3,5)}-\tfrac{1}{3}a_{s}-\tfrac{2}{3}(a_{z(1,4)(3,5)}+a_{z(2,6)(3,5)})-\tfrac{1}{3}a_{z(1,6)(2,4)},\\
\hat \delta_1&:=a_{s}(u_{(2,3,5)}+u_{(1,5,3)})-(u_{(2,3,5)}+u_{(1,5,3)},a_{s})_V a_{s},\\ % ccc
 % v29
\beta_1&:=v_{(1,2)(3,4,5,6)}-\tfrac{1}{2}a_{s}+2(a_{z(3,4)(5,6)}+a_{z(3,6)(4,5)}), \\ % u1
\beta_2&:=v_{(1,2,4,6)(3,5)}-\tfrac{1}{2}a_{s}+2(a_{z(1,4)(3,5)}+a_{z(2,6)(3,5)}),\\
\delta_1&:=u_{(2,3,5)}+u_{(1,5,3)}-(u_{(2,3,5)}+u_{(1,5,3)},a_{s})_V a_{s}-4\hat \delta_1. % u29
\end{align*}
By~\cite[Table~4]{IPSS} and, since $u_{(2,3,5)}+u_{(1,5,3)}$ is invariant for $s$, by~\cite[Equations (3.5) and  (3.6)]{Rehren}, $\hat \beta_1$, $\hat \beta_2$, and $\hat \delta_1$ are $\tfrac{1}{4}$-eigenvectors for $\ad_{a_{s}}$,
while $\beta_1$, $\beta_2$, and $\delta_1$
 are $0$-eigenvectors for $\ad_{a_{s}}$. Hence, by Remark~\ref{start}, we have
 $$
(\delta, \hat \delta_1)_V=(\hat \delta,\delta_1)_V= (\delta_1, \hat \beta_1)_V=(\hat \delta_1, \beta_1)_V=(\delta, \hat \beta_2)_V=0.
 $$
The identities  $(\delta, \hat \delta_1)_V=0$ and $(\hat \delta,\delta_1)_V=0$ imply respectively 
$$
\tfrac{3392}{6075}x_1-\tfrac{2}{9}x_3-\tfrac{16}{27}x_6+\tfrac{7801}{97200}=0
$$
 and 
 $$
 -\tfrac{2272}{6075}x_1+\tfrac{7}{9}x_3+\tfrac{112}{135}x_6-\tfrac{18461}{97200}=0,
 $$
whence $x_6= \tfrac{80}{63}x_1+\tfrac{131}{1792}$ and $x_3=-\tfrac{1376}{1575}x_1+\tfrac{6283}{37800}$. 
The identities  $(\delta_1, \hat \beta_1)_V=0$ and $(\hat \delta_1, \beta_1)_V=0$ imply $x_1=\tfrac{27}{256}$ and $x_5=\tfrac{5}{48}$, whence $x_2=\tfrac{31}{384}$, $x_3=\tfrac{2}{27}$ and $x_6=\tfrac{53}{256}$. Finally, the identity $(\delta, \hat \beta_2)_V=0$ implies $x_4=\tfrac{117}{256}$. 
\end{proof}

\pagebreak

 \begin{lemma}\label{innerf3}
 Let  $g, l\in G^{(4)}$ and $f\in G^{(5)}$. The following hold
\begin{enumerate}
\item if $f\in G^{(5)}_{5}(l)\cap G^{(5)}_5(l^{-1})$, then $ (w_{f}, \ov_{l})_V=0$;
\item if $f\in G^{(5)}_{3}(l)\cap G^{(5)}_5(l^{-1})$, then $ (w_{f}, \ov_{l})_V=\tfrac{175}{49152}$;
\item if $f\in G^{(5)}_{4}(l)\cap G^{(5)}_3(l^{-1})$, then $ (w_{f}, \ov_{l})_V=-\tfrac{175}{49152}$;
\item if $f\in G^{(5)}_{4}(l)\cap G^{(5)}_5(l^{-1})$, then $ (w_{f}, \ov_{l})_V=\tfrac{49}{6144}$;
\item if $f\in G^{(5)}_{2}(l)\cap G^{(5)}_5(l^{-1})$, then $ (w_{f}, \ov_{l})_V=-\tfrac{49}{6144}$;
\item  if $g\in G^{(4)}_{4}(l)\cup G^{(4)}_5(l^{-1})$, then $ (v_{g}, \ov_{l})_V=\tfrac{41}{384}$.
\end{enumerate}
\end{lemma}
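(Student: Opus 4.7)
I will follow the eigenvector-orthogonality strategy already used in the proofs of Lemmas~\ref{45} and~\ref{innerf2}. By Corollary~\ref{product44f}, $a_{l^2}\cdot\ov_l=0$, so $\ov_l$ lies in the $0$-eigenspace of $\ad_{a_{l^2}}$ and is therefore orthogonal to every $\tfrac{1}{4}$-eigenvector of this operator. For each case, I would fix explicit representatives $l\in G^{(4)}$ and $f\in G^{(5)}$ (resp.\ $g\in G^{(4)}$) in $A_6$ and check whether the involution $l^2$ inverts $f$ (resp.\ $g$).

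When $l^2$ inverts $f$, the subgroup $\langle l^2, f\rangle$ is dihedral of order $10$, the Norton-Sakuma algebra $\langle\langle a_{l^2},a_{l^2 f}\rangle\rangle$ has type $5A$, and by~\cite[Table~4]{IPSS} the vector
\[
\hat\beta:=w_f+\tfrac{1}{2^7}\bigl(a_{l^2 f}+a_{fl^2}-a_{l^2 f^2}-a_{l^2 f^3}\bigr)
\]
is a $\tfrac{1}{4}$-eigenvector for $\ad_{a_{l^2}}$. The identity $(\ov_l,\hat\beta)_V=0$ is then a linear equation whose only unknown is $(w_f,\ov_l)_V$; every other pairing $(a_s,\ov_l)_V$ appearing has been tabulated in Lemma~\ref{innerf1} or~\ref{innerf2} once one classifies the subgroup $\langle s,l\rangle$ and the order $|sl|$. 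When $l^2$ does not invert $f$, Remark~\ref{remG} (which gives $w_{f^2}=-w_f$) reduces the problem to the inverted case by passing from $f$ to $f^2$; this identification collapses the independent computations in (2)--(5) to only the rows (2) and (4). Case (6) is structurally identical, with the $5A$-algebra replaced by the $4A$-algebra $\langle\langle a_{l^2},a_{l^2 g}\rangle\rangle$ and the corresponding $\tfrac{1}{4}$-eigenvector $v_g-\tfrac{1}{3}a_{l^2}-\tfrac{2}{3}(a_{l^2 g}+a_{l^2 g^3})-\tfrac{1}{3}a_{(l^2)^g}$ from~\cite[Table~4]{IPSS} substituted in.

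The only delicate point is case~(1), in which the assertion is $(w_f,\ov_l)_V=0$: here a judicious choice of representatives for which $l^2$ inverts $f$ (for instance $l=(1,2)(3,4,5,6)$ and $f=(2,3,6,4,5)$) shows that each of the four axes $a_{l^2 f}$, $a_{fl^2}$, $a_{l^2 f^2}$, $a_{l^2 f^3}$ generates $A_6$ together with $l$ with product of order~$5$, so by Lemma~\ref{innerf2}.(2) each inner product with $\ov_l$ equals $\tfrac{31}{384}$, whence the symmetric sum in $\hat\beta$ collapses. The remaining obstacle is purely computational bookkeeping: for each representative pair one must identify the isomorphism type of $\langle s,l\rangle$ for every axis $a_s$ that appears on the right-hand side of the orthogonality relation, and route the value through the correct row of Lemmas~\ref{innerf1}--\ref{innerf2}. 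The explicit linear-algebra evaluations are then straightforward and are carried out in~\cite[algebraA6(2B).g]{code}.
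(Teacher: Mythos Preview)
Your argument for case~(1) is correct and essentially coincides with the paper's. The reductions of (3) to (2) and of (5) to (4) via $w_{f^2}=-w_f$ are also right and are exactly what the paper does.

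The genuine gap is your implicit claim that, after possibly replacing $f$ by $f^2$, the involution $l^2$ always inverts $f$. This fails in cases (2), (4) and (6). For instance, in case~(2) with $l=(1,2)(3,6,4,5)$ and $f=(1,4,3,5,6)$ one has $l^2=(3,4)(5,6)$, and $|l^2f|=3$, $|l^2f^2|=5$; in case~(4) with $l=(1,2)(3,6,4,5)$ and $f=(1,6,2,5,4)$ one gets $|l^2f|=4$, $|l^2f^2|=5$; in case~(6) with $l=(1,2)(3,5,4,6)$ and $g=(1,6)(2,5,3,4)$ one gets $|l^2g|=5$. Since whether $l^2$ inverts $f$ (or $g$) is an orbit invariant, no choice of representatives rescues this, and the subalgebras $\langle\langle a_{l^2},a_{l^2f}\rangle\rangle$, $\langle\langle a_{l^2},a_{l^2g}\rangle\rangle$ you invoke simply do not exist. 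Moreover $t=l^2$ is the \emph{only} involution with $(a_t,\ov_l)_V=0$, so there is no alternative $t$ for which $\ov_l$ is a $0$-eigenvector.

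The paper handles (2) and (4) by choosing instead an involution $s$ inverting $f$ with $\langle s,l\rangle\cong S_4$; then $\ov_l$ (or $\ov_l+\ov_{l^s}$) is $s$-invariant, and the Rehren projections $\hat\delta:=a_s\ov_l-(\ov_l,a_s)_Va_s$ and $\delta:=\ov_l-(\ov_l,a_s)_Va_s-4\hat\delta$ can be computed explicitly inside the $13$-dimensional subalgebra $\hat V(\langle s,l\rangle)$ via~\cite[Section~5]{IPSS}. Pairing these against the $5A$-eigenvectors $\beta_1,\hat\beta_1$ for $\ad_{a_s}$ gives the required linear equation in $(w_f,\ov_l)_V$. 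For case~(6) the paper takes an entirely different route: it exploits the known value $(\eta_s,\eta_s)_V=\tfrac{4565}{12288}$ from Lemma~\ref{innerf1} together with the expression of $\eta_s$ furnished by Lemma~\ref{w4fine} (via Proposition~\ref{2xS4}, Lemma~\ref{w4}, Proposition~\ref{2eta}, and Lemma~\ref{2-3inA5}); expanding $(\eta_s,\eta_s)_V$ bilinearly, the only unknown pairing left is $(v_g,\ov_l)_V$.
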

\begin{proof}
Let $l$ and $f$ be as in {\it (1)}. We may assume $l=(1,2)(3,4,5,6)$ and $f=(2,4,5,3,6)$. By Corollary~\ref{product44f}, $\ov_{l}$ is a $0$-eigenvector for $\ad_{a_{l^2}}$. Let 
$$
\hat \beta:=w_f+\tfrac{1}{2^7}(a_{l^2f}+a_{l^2f^4}-a_{l^2f^2}-a_{l^2f^3}). 
$$ 
Since $\langle l^2, f\rangle\cong D_{10}$, $\langle \langle a_{l^2}, a_{l^2f}\rangle \rangle$ is a Norton-Sakuma algebra of type $5A$ and by~\cite[Table~4]{IPSS}, $\hat \beta$ is a $\tfrac{1}{4}$-eigenvector for $\ad_{a_{l^2}}$.
Thus $(\ov_{l}, \hat \beta)_V=0$. Since, by Lemmas~\ref{innerf1} and \ref{innerf2}, $(\ov_{l},a_{l^2f}+a_{l^2f^4}-a_{l^2f^2}-a_{l^2f^3})_V=0$, we get
$$(\ov_{l},w_f)_V=(\ov_{l},w_f+\tfrac{1}{2^7}(a_{l^2f}+a_{l^2f^4}-a_{l^2f^2}-a_{l^2f^3}))_V=(\ov_{l}, \hat \beta)_V=0.$$ 

Let $l$ and $f$ be as in {\it (2)}. We may assume $l=(1,2)(3,6,4,5)$ and $f=(1,4,3,5,6)$. Set $s:=(3,5)(4,6)$,
\begin{equation}
    \label{beta1}
\beta_1:=w_f+\tfrac{3}{2^9}a_{s}-\tfrac{15}{2^7}(a_{sf}+a_{sf^4})-\tfrac{1}{2^7}(a_{sf^2}+a_{sf^3})
\end{equation}
and 
$$
\hat \beta_1:=w_f+\tfrac{1}{2^7}(a_{sf}+a_{sf^4}-a_{sf^2}-a_{sf^3}). 
$$
Then $f^s=f^{-1}$ and so, by ~\cite[Table~4]{IPSS}, $\beta_1$ is a $0$-eigenvector and $\hat \beta_1$ is a  $\tfrac{1}{4}$-eigenvector for $\ad_{a_{s}}$, respectively.
Moreover, since $l^s=l^{-1}$, $\ov_{l}$ is $s$-invariant. By~\cite[Equations (3.5) and (3.6)]{Rehren}, it follows that the vectors
\begin{align*}
\hat \delta_1:=a_{s}\ov_{l}-(\ov_{l},a_{s})_V a_{s}\:\: \mbox{ and }\:\: % 1/4*cc169
 \delta_1:=\ov_{l}-(\ov_{l},a_{s})_V a_{s}-4\hat \delta_1 % bb169
\end{align*}
are, respectively, a $\tfrac{1}{4}$-eigenvector and a $0$-eigenvector for $\ad_{a_{s}}$, whence
$$
(\beta_1, \hat \delta_1)_V=(\delta_1, \hat \beta_1)_V=0.
$$
Now, substituting the expressions for $\beta_1, \hat \beta_1, \delta_1$, and $\hat \delta_1$ in   the identity $(\beta_1, \hat \delta_1)_V+(\delta_1, \hat \beta_1)_V=0$, by the results in Section~\ref{Scalar} and  Lemmas~\ref{innerf1} and \ref{innerf2}, we obtain $(\ov_{l}, w_f )_V=\tfrac{175}{49152}$. 

If $l$ and $f$ are as in {\it (3)}, then $l$ and $f^2$ satisfy the hypotheses of claim {\it (2)} and the result follows by Remark~\ref{remG}.

Let $l$ and $f$ be as in {\it (4)}. We may assume $l=(1,2)(3,6,4,5)$ and $f=(1,6,2,5,4)$. Set $s:=(1,2)(4,5)$ and define $\beta_1$ as in Equation~(\ref{beta1}). By ~\cite[Table~4]{IPSS}, $\beta_1$ is a $0$-eigenvector for $\ad_{a_{s}}$. Moreover, by~\cite[Equations (3.5) and (3.6)]{Rehren},
$$
\hat \delta_2:=a_{s}(\ov_{l}+\ov_{l^s})-(\ov_{l}+\ov_{l^s},a_{s})_V  a_{s}
$$
is a $\tfrac{1}{4}$-eigenvector for $\ad_{a_{s}}$ and it can be computed explicitly as a linear combination of axes and \ghost $4$-axes in the subalgebra $V(\langle s, l\rangle)$. From the identity $(\beta_1, \hat \delta_2)_V=0$ we get  $(\ov_{l}, w_f)_V=\tfrac{49}{6144}$. 

If $l$ and $f$ are as in {\it (5)}, then $l$ and $f^2$ satisfy the hypotheses of claim {\it (4)} and the result follows by Remark~\ref{remG}.

Finally, 
assume $g$ and $l$ are as in {\it (6)}. Fix $s\in G^{(2)}$,   then, by Lemma~\ref{innerf1}.(9), \begin{equation}\label{eqetas}
(\eta_s, \eta_s)_V=\tfrac{4565}{12288}.
\end{equation} 
By Lemma~\ref{w4fine}, $\eta_s$ has an expression as a linear combination of axes, odd axes and \ghost $4$-axes, which can be explicitly obtained using Proposition~\ref{2xS4}, Lemma~\ref{w4}, Proposition~\ref{2eta}, and Lemma~\ref{2-3inA5} (see~\cite[algebraA6(2B).g]{code}). 
Substituting this expression in Equation~\ref{eqetas}, by the linearity of the inner product and the values of the inner products  obtained so far, we get 
$$\tfrac{4565}{12288}=(\eta_s, \eta_s)_V=\tfrac{36643}{98304}-\tfrac{3}{256}(v_g, \ov_{l})_V,$$
whence
$$
(v_g, \ov_{l})_V=\tfrac{41}{384}.
$$
\end{proof} 

Let $g\in G^{(4)}$. Let $K_1$ and $K_2$ denote the two subgroups of $G$ isomorphic to $S_4$ containing $g$, such that, for $i\in \{1,2\}$, the elements of order $3$ in $K_i$ have cycle structure $3^i$. Further, let $Y$ be the set of all $2$-axes, $3$-axes and $4$-axes of $V$ (i.e. $Y=X\setminus 5Y$, in the notation used at the beginning of Section~\ref{Scalar}). Set
\begin{eqnarray*}
[\ov_{g}]_2&:=&  -\tfrac{200}{81}a_{g^2}-\tfrac{176}{81}\sum_{t\in K_1\cap G_2^{(2)}(g)}a_{t}-\tfrac{68}{81}\sum_{t\in K_2\cap G_2^{(2)}(g)}a_{t}+\tfrac{40}{81}\sum_{t\in G_2^{(5)}(g)}a_{t} \nonumber\\
&&-\tfrac{98}{81}\sum_{t\in K_1\cap G_2^{(3)}(g)}a_{t}+\tfrac{10}{81}\sum_{t\in K_2\cap G_2^{(3)}(g)}a_{t}\nonumber \\
&&-\tfrac{2}{81}\sum_{t\in G_2^{(4)}(g), [g,t]\in K_2\setminus \{1_G\}
%(1,2,3)(4,5,6)^G 
}a_{t} +\tfrac{52}{81}\sum_{t\in G_2^{(4)}(g), [g,t] \in K_1\setminus \{1_G\}
%(1,2,3)^G
}a_{t}; \nonumber 
\end{eqnarray*}

\begin{eqnarray*}
[\ov_{g}]_3 &=& -\tfrac{15}{16}\sum_{h\in (1,2,3)^G, |[g,h]|=5 }u_{h}
+\tfrac{25}{24}\sum_{h\in K_1^{(3)}}u_{h}+\tfrac{5}{6}\sum_{h\in (1,2,3)^G, |[g,h]|=2}u_{h} \\ 
&&+\tfrac{5}{48}\sum_{h\in K_2^{(3)} }u_{h}-\tfrac{5}{48}\sum_{h\in (1,2,3)(4,5,6)^G, |[g,h]|=2 }u_{h};
\end{eqnarray*}

\begin{eqnarray*}
[\ov_{g}]_4 &=&   -\tfrac{14}{27}\sum_{h\in G^{(4)}, |[g,h]|=5 }v_{h}+\tfrac{40}{27}\sum_{h\in G^{(4)}_2(g^2), |[g,h]|=3 }v_{h}  \nonumber \\
&&+\tfrac{10}{27}\sum_{h\in G^{(4)}, |[g,h]|=2 }v_{h}-\tfrac{2}{27}\sum_{h\in G^{(4)}_4(g^2), |[g,h]|=3 }v_{h}. \nonumber \\ 
\end{eqnarray*}
Note that, for $N\in \{2,3,4\}$, $[\ov_{g}]_N\in V^{(NA)}\leq \langle Y\rangle$.
 
\begin{lemma}
\label{L64} 
For every $g\in \G^{(4)}$,   $\ov_{g}$ admits the following expression as a linear combination of elements of $Y$:
\begin{eqnarray}\label{ov}
\ov_{g}&=& [\ov_{g}]_2+[\ov_{g}]_3+[\ov_{g}]_4. 
\end{eqnarray}
\end{lemma}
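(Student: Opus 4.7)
The plan is to prove identity~(\ref{ov}) by a positivity argument. Let $w$ denote the right-hand side of (\ref{ov}); by construction $w\in U$, since it is a rational linear combination of $2$-, $3$-, and $4$-axes. Because the Majorana form on $V$ is positive definite, it suffices to show
\[
(\ov_{g}-w,\ov_{g}-w)_V = 0,
\]
for then $\ov_{g}=w$ follows at once.

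Expanding by bilinearity,
\[
(\ov_{g}-w,\ov_{g}-w)_V=(\ov_{g},\ov_{g})_V-2(\ov_{g},w)_V+(w,w)_V,
\]
and every summand is accessible from material already established. The first equals $2$ by Lemma~\ref{innerf1}(7). The middle term is a rational combination of the scalars $(\ov_{g},a_t)_V$, $(\ov_{g},u_h)_V$, and $(\ov_{g},v_l)_V$ for varying $t\in G^{(2)}$, $h\in G^{(3)}$, $l\in G^{(4)}$, all of which appear in Lemmas~\ref{innerf1}, \ref{innerf2}, and \ref{innerf3}. The last term is a rational combination of pairwise inner products of elements of $Y$, all tabulated in Section~\ref{Scalar}. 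The bookkeeping is organised by grouping the summands in $[\ov_{g}]_2$, $[\ov_{g}]_3$, $[\ov_{g}]_4$ along the $C_G(g)$-orbits on the relevant axes; since $\ov_{g}$ is $C_G(g)$-invariant and the coefficients of (\ref{ov}) are constant on each such orbit, a single representative per orbit suffices to assemble the contributions.

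The specific shape of $w$ may be produced in advance by solving the Gram-matrix system $Mc=b$, where $b$ records the values $(\ov_{g},y)_V$ for $y$ ranging over a generating set of $U$ and $M$ is the corresponding Gram matrix built from the data of Section~\ref{Scalar}; the ansatz (\ref{ov}) is then the resulting solution. The main obstacle is not conceptual but purely combinatorial: a large number of pairs contribute to $(w,w)_V$, so performing the arithmetic by hand is unpleasant, but every ingredient is an explicit rational number and the verification is mechanical. It is carried out in the companion GAP code~\cite{code}. Once $(\ov_{g}-w,\ov_{g}-w)_V$ is checked to vanish, positive definiteness of the Majorana form yields $\ov_{g}=w$, completing the proof.
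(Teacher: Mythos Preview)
Your proposal is correct and follows essentially the same route as the paper: both argue that the squared norm $(\ov_{g}-w,\ov_{g}-w)_V$ vanishes, using the inner products assembled in Section~\ref{Scalar} and Lemmas~\ref{innerf1}--\ref{innerf3}, and then invoke positive definiteness of the Majorana form. Your additional remarks on organising the computation by $C_G(g)$-orbits (in fact one may use the larger group $N_G(\langle g\rangle)$, since $\ov_g$ depends only on $\langle g\rangle$ by (M8D)) and on obtaining the ansatz via the Gram system are helpful expository detail, but do not alter the underlying strategy.
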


\begin{proof}
The inner products computed in this section and in Section~\ref{Scalar} imply  
$$
(\ov_{g}-[\ov_{g}]_2-[\ov_{g}]_3-[\ov_{g}]_4\:,\: \ov_{g}-[\ov_{g}]_2-[\ov_{g}]_3-[\ov_{g}]_4)_V=0,
$$
whence $\ov_{g}=[\ov_{g}]_2+[\ov_{g}]_3+[\ov_{g}]_4$ by the positive definiteness of the inner product. (for the explicit computation see~\cite[algebraA6(2B).g]{code}). 
\end{proof} 

\begin{cor} \label{4f}
 $U^\bullet= U$. 
\end{cor}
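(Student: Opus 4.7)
The plan is to read off Corollary \ref{4f} directly from Lemma \ref{L64}, with no further computation required. The containment $U \leq U^{\bullet}$ is immediate from the definition of $U^{\bullet}$ in \eqref{defU}, so the entire content is the reverse containment $U^{\bullet} \leq U$.

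For this, I would argue as follows. By definition $U^{\bullet}$ is generated over $U$ by the dormant $4$-axes $\overline v_{l}$ with $l \in G^{(4)}$, so it suffices to show that each $\overline v_{l}$ already belongs to $U$. But the decomposition \eqref{ov} established in Lemma \ref{L64} is precisely such an expression: for every $g \in G^{(4)}$ one has
$$\overline v_{g} \;=\; [\overline v_{g}]_{2} + [\overline v_{g}]_{3} + [\overline v_{g}]_{4},$$
and by construction $[\overline v_{g}]_{N} \in V^{(NA)}$ for $N \in \{2,3,4\}$. Consequently $\overline v_{g} \in V^{(2A)} + V^{(3A)} + V^{(4A)} = U$, which gives $\langle \overline v_{l} \mid l \in G^{(4)}\rangle \leq U$ and hence $U^{\bullet} = U$.

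There is no serious obstacle to overcome here, since the work has already been done in the preceding section: the inner product computations of Lemmas \ref{innerf1}, \ref{innerf2}, \ref{innerf3} were combined in the proof of Lemma \ref{L64} to show, via positive definiteness of the Majorana form, that $\overline v_{g} - [\overline v_{g}]_{2} - [\overline v_{g}]_{3} - [\overline v_{g}]_{4}$ has vanishing self-inner-product and is therefore zero. Corollary \ref{4f} is simply the structural reformulation of that identity.
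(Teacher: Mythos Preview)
Your argument is correct and matches the paper's approach: both deduce $\overline v_g\in U$ directly from the decomposition in Lemma~\ref{L64} and conclude $U^\bullet=U$. The paper's proof additionally remarks that $U$ is $G$-invariant and $G$ acts transitively on its cyclic subgroups of order~$4$ (together with~(M8D)) to pass from one $g$ to all, but since Lemma~\ref{L64} is already stated for every $g\in G^{(4)}$ this is redundant and your direct reading is equally valid.
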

\begin{proof}
By Lemma~\ref{L64} $\ov_{g}\in U$. Now $U$ is invariant under the action of $G$, so the claim  follows because, by Hypothesis (M8D),  $\ov_{g}$ depends uniquely on $\langle g \rangle$ and  $G$ is transitive on its subgroups of order $4$.  
\end{proof}

%%%%%%%%%%%%%%%%%%%%%%%%%%%%%%%%

\section{$5$-axes}\label{5assi}

Let 
\begin{equation}\label{somma5}
w:=\sum_{e\in G^{(5)}}w_e
\end{equation}
be the sum of all the $36$ positive $5$-axes of $V$.
Moreover, for $f\in G^{(5)}$ set
\begin{eqnarray}\label{wf2}
 [w_f]_2&:=& \tfrac{5}{768}\left (\sum_{t\in G_3^{(2)}(f)}a_{t}-\sum_{t\in G_5^{(2)}(f), \langle f, t\rangle \cong A_5}a_{t}\right ) \nonumber\\
&&
+\tfrac{5}{192}\left (\sum_{t\in G_5^{(2)}(f), \langle f, t\rangle =G}a_{t}-\sum_{t\in G_4^{(2)}(f)}a_{t}\right );
\end{eqnarray}
\begin{eqnarray}\label{wf3}
 [w_f]_3&:=& \tfrac{165}{16384}\left(  \sum_{h\in G^{(3)},  [f,h]\in f^G}
u_h-\sum_{h\in G^{(3)}, |[f,h]|=3} u_h\right )\nonumber \\
&& +\tfrac{105}{16384}\left (\sum_{h\in G^{(3)},  |[f,h]|=5, [f,h]\not \in f^G}
u_h - \sum_{h\in G^{(3)}, |[f,h]|=2} u_h   \right );  
\end{eqnarray}
\begin{eqnarray}\label{wf4}
 [w_f]_4&:=& \tfrac{1}{64}\left ( \sum_{h\in G^{(4)}, |[f,h]|=2} v_h +\sum_{h\in G^{(4)}, [f,h]\in f^G}
v_h\right ) \nonumber \\
&& -\tfrac{1}{64}\left ( \sum_{h\in G^{(4)}, |[f,h]|=4, (w_f, v_h)_V\neq 0} v_h +\sum_{h\in G^{(4)}, |[f,h]|=5, [f,h]\not \in f^G}v_h\right ).  
\end{eqnarray}
\begin{lemma}\label{rel5assi}
For every $f\in G^{(5)}$, $w_f$ admits the following expression as a linear combination of $w$ and elements of $Y$:
\begin{eqnarray} \label{w}
w_f&=&\tfrac{1}{36}w +[w_f]_2+[w_f]_3+[w_f]_4
\end{eqnarray}
\end{lemma}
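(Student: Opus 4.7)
My plan is to mirror the strategy used in Lemma~\ref{L64}: set
$$
\xi_f := w_f - \tfrac{1}{36}w - [w_f]_2 - [w_f]_3 - [w_f]_4 \in V,
$$
and prove the identity by showing that $(\xi_f, \xi_f)_V = 0$, whence $\xi_f = 0$ by positive definiteness of the Majorana form. Thanks to the $G$-invariance of the formula, it suffices to verify this for a single representative $f \in G^{(5)}$, since for any $g \in G$ the equation for $f^g$ is obtained from that of $f$ by applying $g$.

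Expanding $(\xi_f, \xi_f)_V$ by bilinearity produces a finite sum of inner products of six kinds: $(w_f, w_f)_V$, $(w_f, w_e)_V$ and $(w_e, w_{e'})_V$ for $e, e' \in G^{(5)}$, $(y, y')_V$ for $y, y' \in Y$, and the mixed terms $(w_f, y)_V$ and $(w_e, y)_V$. Every single one of these is already known: inner products inside $Y$ come from Tables~\ref{NS}, \ref{tau33}, \ref{tau34}, \ref{tau44}, while the terms involving $5$-axes are read off Tables~\ref{tau5}, \ref{tau35}, \ref{tau45}, \ref{tau55}, with contributions from $w$ obtained by summing the appropriate rows over the $36$ elements of $G^{(5)}$.

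Concretely, I would first fix $f$ and, for every element $z$ appearing with nonzero coefficient in $\xi_f$, tabulate the $G$-orbits of pairs $(z, z')$ as $z'$ runs over the indexing set in each of $[w_f]_2$, $[w_f]_3$, $[w_f]_4$, and $w$. This is a finite enumeration inside $A_6$, easily encoded as the class-algebra constants for the $G$-action on pairs of subgroups of the relevant orders. Substituting the inner-product values from the tables, collecting terms according to orbit, and checking that the total vanishes will give the identity.

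The principal difficulty is not conceptual but organizational: the expansion involves a large number of orbit types and the coefficients $\tfrac{5}{768}, \tfrac{5}{192}, \tfrac{165}{16384}, \tfrac{105}{16384}, \tfrac{1}{64}$ must be correctly paired with the right orbit counts and with the case distinctions in Tables~\ref{tau5}--\ref{tau55} (in particular the condition $(w_f, v_h)_V \neq 0$ inside $[w_f]_4$, which excludes exactly the vanishing row of Table~\ref{tau45}). I would therefore carry the expansion out on the same computer-algebra setup already used in~\cite[algebraA6(2B).g]{code}, feeding in the explicit tables of Sections~\ref{Scalar} and~\ref{moreinner} and checking $(\xi_f, \xi_f)_V = 0$ as a single rational identity.
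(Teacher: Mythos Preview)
Your proposal is correct and follows exactly the paper's own argument: define the difference $\xi_f$, compute $(\xi_f,\xi_f)_V$ from the inner-product tables of Sections~\ref{Scalar} and~\ref{moreinner} (delegating the bookkeeping to \cite[algebraA6(2B).g]{code}), and conclude $\xi_f=0$ by positive definiteness. The only addition in your write-up is the explicit remark on $G$-invariance reducing to a single representative, which the paper leaves implicit.
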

\begin{proof}
 Using the values of the inner products between axes obtained in Sections~\ref{Scalar} and~\ref{moreinner},  we get  
 $$
 (w_f-\tfrac{1}{36}w -[w_f]_2-[w_f]_3-[w_f]_4\:,\: w_f-\tfrac{1}{36}w -[w_f]_2-[w_f]_3-[w_f]_4)_V=0
 $$ 
 (see~\cite[algebraA6(2B).g]{code}), whence $w_f=\tfrac{1}{36}w +[w_f]_2+[w_f]_3+[w_f]_4$, by the positive definiteness of the inner product. 
\end{proof}

Now let $K$ be a subgroup of $G$ isomorphic to $S_4$ and let $s$ be an element of order $2$ in $ K\setminus K^\prime$. By Proposition~\ref{maximal}, there are $r \in K^\prime$ of order two and $g\in G^{(4)}$ such that $C_K(s)=\{1_G, s, r,  g^2\}$.  Let $\bar t\in K^{(2)}$ be  such that the Klein subgroup of $K$ is  $\{1_G, r, \bar t,\bar t^s\}$. 
%Now let $S$ be a subgroup of $G$ isomorphic to $S_4$ and let $s\in S^{(2)}\setminus S^\prime$. Set $C_{S^\prime}(s)^{(2)}=\{r^\prime\}$, $(S^\prime)^{(2)}\setminus \{r^\prime\}=\{\bar t,\bar t^s\}$, and $C_S(s)^{(2)}\setminus (S^\prime \cup \{s\})=\{g^2\}$, with $g\in G^{(4)}$. 
Then, set
\begin{eqnarray}\label{etas2}
[\eta_s]_2&:=&\tfrac{15}{32}a_{s}-\tfrac{1}{24}a_{r}-\tfrac{1}{36}(a_{\bar t}+a_{\bar t^s})-\tfrac{23}{288}a_{g^2}+\tfrac{11}{144}\sum_{t\in C_G(s)^{(2)}\setminus C_G(r)} a_{t} \nonumber \\
 &&  +\tfrac{1}{8}\sum_{t\in K^{(2)}\setminus \{s,r, \bar t,\bar t^s, g^2\}} a_{t}-\tfrac{17}{144}\sum_{t\in C_G(g^2)^{(2)}\setminus C_G(s)} a_{t} \nonumber \\
&& +\tfrac{11}{144} \sum_{t\in C_G(s)^{(2)}\setminus K} a_{t}
%+\tfrac{11}{144}\sum_{t\in C_G(s)^{(2)}\setminus C_G(r)} a_{t}  -\tfrac{19}{144}\sum_{\textsuperscript{ $\begin{array}{c} t\in G^{(2)}, \\
%|ts|=3, |tg^2|=4\end{array}$}}a_{t}
+\tfrac{1}{16}\sum_{\textsuperscript{$\begin{array}{c} t\in G^{(2)}, \\
|ts|=4, |tr|=3\end{array}$}}a_{t}\\
&& +\tfrac{1}{72}\sum_{t\in G^{(2)}, |tg^2|=5} a_{t} +\tfrac{1}{36}\sum_{\textsuperscript{$\begin{array}{c}t\in G^{(2)},\\ |ts|=5, |tr|=3\end{array}$}} a_{t} ;   \nonumber  
\end{eqnarray}

\begin{eqnarray}\label{etas3}
 [\eta_s]_3&:=&  -\tfrac{15}{128}\sum_{\textsuperscript{$ \begin{array}{c}h\in G^{(3)},\\ |\langle h,s\rangle|\leq 24, \langle h,g^2\rangle\cong S_3\end{array}$}} u_h +\tfrac{5}{64}\sum_{\textsuperscript{$ \begin{array}{c}h\in G^{(3)},\\ \langle h,s\rangle\cong A_4, \langle h,g^2\rangle\cong S_4\end{array}$}} u_h \nonumber\\ 
&& +\tfrac{5}{128}\sum_{\textsuperscript{$ \begin{array}{c}h\in G^{(3)},\\ \langle h,s\rangle\cong S_4, \langle h,g^2\rangle\cong A_4\end{array}$}} u_h +\tfrac{5}{128}\sum_{\textsuperscript{$ \begin{array}{c}h\in G^{(3)},\\ \langle h,s\rangle\cong S_3, \langle h,g^2\rangle\cong A_4\end{array}$}} u_h\nonumber \\ 
&& +\tfrac{5}{512}\sum_{\textsuperscript{$ \begin{array}{c}h\in G^{(3)},\\ \langle h,s\rangle\cong \langle h,g^2\rangle\cong S_4\end{array}$}} u_{h}- \tfrac{15}{512}\sum_{\textsuperscript{$ \begin{array}{c}h\in G^{(3)}\cap (1,2,3)^G,\\ \langle h,s\rangle\cong \langle h,g^2\rangle\cong A_5\end{array}$}} u_{h} \\
&& -\tfrac{5}{512}\sum_{\textsuperscript{$ \begin{array}{c}h\in G^{(3)},\\ \langle h,s\rangle\cong S_3, \langle h,g^2\rangle\cong S_4\end{array}$}} u_{h}- \tfrac{5}{512}\sum_{\textsuperscript{$ \begin{array}{c}h\in G^{(3)},\\ \langle h,s\rangle\cong \langle h,g^2\rangle\cong A_4\end{array}$}} u_{h}; \nonumber  
\end{eqnarray}

\begin{eqnarray}\label{etas4}
 [\eta_s]_4&:=& \tfrac{1}{24} v_g  -\tfrac{1}{48}\sum_{\textsuperscript{$ \begin{array}{c}l\in G^{(4)},\\
l^s=l^{-1}, \langle l,g^2\rangle \cong S_4 \end{array}$}} v_l-\tfrac{1}{48}\sum_{\textsuperscript{$ \begin{array}{c}l\in K^{(4)},\\
 \langle l, s\rangle =K\end{array}$}} v_l \nonumber \\
 && -\tfrac{1}{16}\sum_{\textsuperscript{$ \begin{array}{c}l\in G^{(4)},\\
 |\langle l, g^2\rangle |=36, |\langle l, s\rangle |\in \{24, 36\}\end{array}$}} v_l-\tfrac{1}{48}\sum_{\textsuperscript{$ \begin{array}{c}l\in G^{(4)},\\
 \langle l, g^2\rangle =G\end{array}$}} v_l \nonumber \\
&& +\tfrac{1}{48}\sum_{\textsuperscript{$ \begin{array}{c}l\in G^{(4)},\\
 \langle l, s\rangle =G, |\langle l, g^2\rangle |=36\end{array}$}} v_l+\tfrac{5}{48}\sum_{\textsuperscript{$ \begin{array}{c}l\in G^{(4)},\\
  |\langle l, s\rangle |=36, \langle l,g^2\rangle \cong S_4\end{array}$}} v_l\\ 
  && +\tfrac{7}{48}\sum_{\textsuperscript{$ \begin{array}{c}l\in G^{(4)},\\
l^{g^2}=l^{-1}, \langle l, s\rangle \cong S_4\end{array}$}} v_l+\tfrac{5}{32}\sum_{\textsuperscript{$ \begin{array}{c}l\in G^{(4)},\\
l^s=l^{-1}, 
l^{g^2}=l^{-1}\end{array}$}} v_l .\nonumber   \end{eqnarray}

\begin{cor}\label{coreta}
With the above notation, the following hold
\begin{enumerate}
\item $V^\circ=\langle w\rangle +U$ and $\dim (V^\circ)=\dim(U)+1=121$;
\item if $f_1, f_2\in G^{(5)}$, then $w_{f_1}-w_{f_2}\in U$; 
\item if $S$ is a subgroup of $G$ isomorphic to $S_4$ and  $s\in S^{(2)}\setminus S^\prime$, then $\eta_s\in U$ and 
admits the following expression as a linear combination of elements of $Y$:
\begin{eqnarray}\label{eqneta}
\eta_{s}&=&[\eta_s]_2+[\eta_s]_3+[\eta_s]_4.
\end{eqnarray}
In particular, $V(S)\leq U$.
\item if $H$ is a subgroup of $G$ isomorphic to $A_5$, then $V(H)\leq V^\circ$. 
\end{enumerate}
\end{cor}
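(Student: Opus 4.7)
Statements (1) and (2) follow directly from Lemma~\ref{rel5assi}. By definition $V^\circ = U + V^{(5A)}$, and Equation~(\ref{w}) expresses each $w_f$ as $\tfrac{1}{36}w$ plus the element $[w_f]_2+[w_f]_3+[w_f]_4 \in U$; hence $V^{(5A)} \subseteq \langle w \rangle + U$, so $V^\circ = \langle w \rangle + U$. Subtracting the two instances of Equation~(\ref{w}) for $w_{f_1}$ and $w_{f_2}$, the $w$-parts cancel and yield (2): $w_{f_1}-w_{f_2} = \sum_{i\in\{2,3,4\}}([w_{f_1}]_i-[w_{f_2}]_i)\in U$. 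The dimension claim in (1) reduces to computing the rank of the Gram matrix on the spanning set $\mathcal A \cup 3Y \cup 4Y \cup \{w\}$: all entries are known from Tables~\ref{NS}--\ref{tau55} and Lemmas~\ref{innerf1}--\ref{innerf3}, and those involving $w$ follow by summing $(w_e,\cdot)_V$ over $e\in G^{(5)}$. The rank equals $121$, which forces $\dim U = 120$ and the linear independence of $w$ from $U$.

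For (3), Lemma~\ref{w4fine} supplies $f_1, f_2 \in G^{(5)}$ with $\eta_s \in (w_{f_1}-w_{f_2})+U^\bullet$; using Corollary~\ref{4f} ($U^\bullet = U$) and part (2), $\eta_s \in U$. To verify the explicit formula (\ref{eqneta}), set $\eta_s^\star := [\eta_s]_2+[\eta_s]_3+[\eta_s]_4 \in U$ and apply the standard positive-definiteness trick: expand $(\eta_s-\eta_s^\star, \eta_s-\eta_s^\star)_V$ by bilinearity; the cross-terms are inner products among axes, 3-axes, 4-axes (Sections~\ref{Scalar} and \ref{moreinner}), scalar products of $\eta_s$ with such vectors (computed from Equation~(\ref{Eqeta}) by associativity as $(\eta_s, x)_V = (a_s, \sum_{g\in S^{(4)}} v_g\cdot x)_V$), and $(\eta_s,\eta_s)_V$ from Lemma~\ref{innerf1}.(8). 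The sum collapses to $0$, whence $\eta_s = \eta_s^\star$. The final assertion $V(S)\leq U$ then follows from Proposition~\ref{2xS4}, since its basis consists of axes (in $U$ by definition), ghost $4$-axes (in $U$ by Corollary~\ref{4f}), and the $\eta$-vectors just shown to be in $U$.

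For (4), Lemma~\ref{A5} provides a basis of $V(H)$ consisting of 2-axes, 3-axes, 5-axes (all in $V^\circ$ by definition) and vectors $\gamma_s$ indexed by the involutions $s\in H$. For each such $s$, the unique $A_4$-subgroup $A\leq H$ containing $s$ in its Klein 4-subgroup is normalised in $A_6$ by a unique $S\cong S_4$ (with $S^\prime = A$), so the vector $\gamma_s$ computed inside $V(H)$ coincides with $\gamma_s$ defined from the pair $(S,s)$ via Equation~(\ref{Eqgamma}); Proposition~\ref{2eta} then gives $\gamma_s\in U^\bullet = U\subseteq V^\circ$. The conceptual work is light and reduces almost entirely to earlier lemmas; the main obstacle is the volume of inner-product bookkeeping behind (1) and (3), since the positive-definiteness identity for $\eta_s-\eta_s^\star$ expands into several hundred summands indexed by $C_G(s)$-orbits of subgroups and is realistically tractable only with the ancillary GAP code.
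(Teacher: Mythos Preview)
Your argument for (1), (2), (4), and the containment $\eta_s\in U$ in (3) is correct and essentially identical to the paper's. One methodological point on the explicit formula~(\ref{eqneta}): you propose to \emph{verify} it via $( \eta_s-\eta_s^\star,\eta_s-\eta_s^\star)_V=0$, computing the cross-terms $( \eta_s,y)_V$ for $y\in Y$ through the associativity identity $( \eta_s,y)_V=(a_s,\sum_{g\in S^{(4)}} v_g\cdot y)_V$. But the products $v_g\cdot y$ are not yet available at this stage (they are only established in Section~\ref{closure}); and in the equivalent form $(\sum_g v_g,\,a_s\cdot y)_V$, the expansion of $a_s\cdot u_h$ or $a_s\cdot v_l$ inside the relevant $V(S_4)$ or $V(A_5)$ generally reintroduces $\eta$-- or $\gamma$--vectors whose inner products with $v_g$ are exactly the unknowns you are after. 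So the direct positive-definiteness check is circular as written.

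The paper instead \emph{derives} the formula by substituting through the chain already in place: Lemma~\ref{w4} writes $\eta_s$ (mod $U$) as a multiple of $a_r\cdot u_h$ with $\langle r,h\rangle\cong A_4$; Lemma~\ref{2-3inA5} expresses that product in an $A_5$-subalgebra via axes, $3$-axes, a difference of $5$-axes, and $\gamma$'s; the $5$-axis difference lies in $U$ by part~(2); and the $\gamma$'s are reduced to $Y$ by combining Equation~(\ref{formulagamma}), the proof of Proposition~\ref{2eta}, and the explicit $\ov$--formula of Lemma~\ref{L64}. Composing these substitutions in GAP yields~(\ref{eqneta}) directly, without needing $( \eta_s,y)_V$ as input. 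Once derived this way, your positive-definiteness identity of course holds and can serve as a consistency check---but it is not an independent route to the formula.
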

\begin{proof}
Since $V^\circ=U+V^{(5A)}$, by Lemma~\ref{rel5assi} we have $V^\circ=\langle w\rangle +U$. A direct computation in GAP of the Gram matrices $\Gamma$ and $\Gamma^\ast$ associated to $Y$ and $Y\cup \{w\}$, respectively, gives  {\it (1)} (see~\cite[algebraA6(2B).g]{code}).
The second assertion follows immediately by Equation~(\ref{w}).

Now let $S$ be a subgroup of $G$ isomorphic to $S_4$ and let $s\in S^{(2)}\setminus S^\prime$ as in {\it (3)}. By Corollary~\ref{w4fine}, $\eta_s\in w_{f_1}-w_{f_2}+U^\bullet$ for some $f_1, f_2\in G^{(5)}$. Then, by {\it (2)} and Proposition~\ref{4f}, $\eta_s\in U$, whence, by Proposition~\ref{2xS4}.(2) and Corollary~~\ref{4f}, $V(S)\leq U$. The explicit formula for $\eta_s$ has been computed with GAP in~\cite[algebraA6(2B).g]{code}. 

Finally, let $H$ be a subgroup of $G$ isomorphic to $A_5$ as in {\it (4)} and let $t\in H^{(2)}$. By Proposition~\ref{2eta} and Corollary~\ref{4f}, $\gamma_t\in U^\bullet=U$, so the result follows by Lemma~\ref{A5}.   
\end{proof} 

Let now $t\in G^{(2)}$ and let $K_1$ and $K_2$ be the two subgroups of $G$ isomorphic to $S_4$ such that $t$ is contained in the derived subgroup of $K_i$, $i\in \{1,2\}$. Denote by $\gamma_{t,i}$ the element $\gamma_t$ defined as in Equation~(\ref{Eqgamma}) in the subalgebra $V(K_i)$.

\begin{cor}\label{corgamma}
With the above notation, let $t\in G^{(2)}$. Then, for $N\in  \{2,3,4\}$ and $i\in \{1,2\}$, there exist $[\gamma_{t,i}]_N\in V^{(NA)}$ such that
$\gamma_{t,i}=[\gamma_{t,i}]_2+[\gamma_{t,i}]_3+[\gamma_{t,i}]_4$.
\end{cor}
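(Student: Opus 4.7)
The plan is to read off the desired decomposition directly from the explicit formula for $\gamma_{g^2}$ provided by Proposition~\ref{2xS4}.(4), once all contributions of non-$Y$-vectors have been eliminated via the earlier lemmas of Sections~\ref{moreinner} and~\ref{5assi}. Given $t\in G^{(2)}$ and $i\in\{1,2\}$, pick an element $g\in K_i^{(4)}$ with $g^2=t$ (possible by Proposition~\ref{maximal}.(4) applied inside $K_i$). Substitute $g^2=t$ and $S=K_i$ into Equation~(\ref{formulagamma}): all axis, $3$-axis and $4$-axis summands already sit in $V^{(2A)}$, $V^{(3A)}$ and $V^{(4A)}$ respectively, so only the term $\tfrac{32}{45}\sum_{s\in C_{K_i}(t)^{(2)}\setminus K_i'}\eta_{s}$ needs further analysis.

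Each $\eta_{s}$ in this sum involves $s\in K_i^{(2)}\setminus K_i'$ with $s$ centralizing $t=g^2$, so $s$ belongs to some subgroup of $G$ isomorphic to $S_4$ that contains $g$ (indeed, to $K_i$ itself). Consequently Corollary~\ref{coreta}.(3), together with Equation~(\ref{eqneta}), is applicable: it expresses $\eta_{s}$ as $[\eta_{s}]_2+[\eta_{s}]_3+[\eta_{s}]_4$ with $[\eta_{s}]_N\in V^{(NA)}$ for $N\in\{2,3,4\}$. Substituting these expressions into the formula for $\gamma_{t,i}$ and collecting terms of the same grade yields the desired decomposition
\begin{equation*}
\gamma_{t,i}=[\gamma_{t,i}]_2+[\gamma_{t,i}]_3+[\gamma_{t,i}]_4,\qquad [\gamma_{t,i}]_N\in V^{(NA)}.
\end{equation*}

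Since every tool needed is already in place, there is essentially no obstacle: the heavy lifting was done in Proposition~\ref{2eta} and Corollary~\ref{4f} (to get $\gamma_{t,i}\in U^{\bullet}=U$), in Lemma~\ref{L64} (which is not actually required here, as no \ghost $4$-axes appear in~(\ref{formulagamma})) and most crucially in Corollary~\ref{coreta}.(3), which gives the grade-by-grade decomposition of each $\eta_s$. The only bookkeeping issue is to verify that the specific $s\in C_{K_i}(t)^{(2)}\setminus K_i'$ appearing in~(\ref{formulagamma}) satisfy the hypothesis of Corollary~\ref{coreta}.(3) with the group $S=K_i$, which is immediate from the construction of $K_i$.
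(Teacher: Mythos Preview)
Your proof is correct and follows essentially the same route as the paper: the paper's proof simply cites Equations~(\ref{formulagamma}) and~(\ref{eqneta}), which is exactly what you do---substitute the decomposition of each $\eta_s$ from Corollary~\ref{coreta}.(3) into Proposition~\ref{2xS4}.(4) and collect terms by grade. Your additional remarks about Proposition~\ref{2eta}, Corollary~\ref{4f}, and Lemma~\ref{L64} are accurate but, as you note, not needed for the argument.
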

\begin{proof}
  This follows from Equations~(\ref{formulagamma})  and~(\ref{eqneta}).
\end{proof}

We conclude this section by determining the decomposition into irreducible submodules of $V^\circ$. 

For a partition $\lambda$ of $6$, denote by $V^{(\lambda)}$ the subspace of $V$ spanned by the $N$-axes in $V$ associated to a permutation of cycle type $\lambda$ and denote by $\Sp^{\lambda}$ the Specht module associated to $\lambda$.

%The lemmas of this section give the following
\begin{theorem}\label{decv}
The structure of $\R[A_6]$-module of $V^\circ$ lifts naturally to a structure of $\R[S_6]$-module. With the above notation, the decomposition of $V^\circ$ into irreducible $\R[S_6]$-submodules is the following
$$V^\circ\cong 3\Sp^{(6)}\oplus 3\Sp^{(5,1)}\oplus 5\Sp^{(4,2)}\oplus 2\Sp^{(3,2,1)}\oplus \Sp^{(3^2)}\oplus 3\Sp^{(2^3)}\oplus \Sp^{(2,1^4)}\oplus \Sp^{(1^6)} .$$
\end{theorem}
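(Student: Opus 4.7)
The plan is to extend the $\R[A_6]$-action on $V^\circ$ to an $\R[S_6]$-action in the natural way, identify each $V^{(NA)}$ as a permutation (or twisted permutation) module for $S_6$, and read off the irreducible decomposition via a character computation.

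First, for each $\sigma \in S_6$ I would define $\phi_\sigma : V^\circ \to V^\circ$ on the natural spanning set by $\phi_\sigma(a_t) := a_{t^\sigma}$, $\phi_\sigma(u_h) := u_{h^\sigma}$, $\phi_\sigma(v_g) := v_{g^\sigma}$, and $\phi_\sigma(w_f) := \varepsilon(\sigma,f)\, w_{f^{(\sigma)}}$, where $f^{(\sigma)} \in A_6^{(5)}$ is the chosen representative of $\langle f^\sigma\rangle$ and $\varepsilon(\sigma,f)\in\{\pm 1\}$ records whether $f^\sigma$ is $A_6$-conjugate to $f^{(\sigma)}$ or to $(f^{(\sigma)})^2$ (the sign being forced by $w_{f^2}=-w_f$ in Remark~\ref{remG}). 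By Proposition~\ref{dime} the $2$-, $3$- and $4$-axes are linearly independent, so well-definedness only needs to be checked on $V^{(5A)}$: the summation formulas $[w_f]_N$ of Equations~(\ref{wf2})--(\ref{wf4}) are indexed by commutator and cycle-type conditions that are $S_6$-invariant up to exactly the same sign convention, so Lemma~\ref{rel5assi} is $S_6$-equivariant. That $\sigma \mapsto \phi_\sigma$ is a group homomorphism extending the $A_6$-action is then immediate from the definitions on the spanning set.

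Next, I would identify the constituent submodules as $S_6$-representations. The spaces $V^{(2A)}$ and $V^{(4A)}$ are the permutation modules on the $45$ bitranspositions and the $45$ cyclic subgroups of order $4$ respectively, while $V^{(3A)}$ is the direct sum of the permutation modules on the $20$ cyclic subgroups generated by $3$-cycles and the $20$ generated by products of two disjoint $3$-cycles (two distinct $S_6$-orbits). For $V^{(5A)}$, the $36$ cyclic subgroups of order $5$ form a single $S_6$-orbit, and since $N_{S_6}(\langle f\rangle)$ has order $20$ with its index-$2$ subgroup $N_{A_6}(\langle f\rangle)$ fixing $w_f$ while the outer coset acts as $-1$ (it sends $f$ to $f^{\pm 2}$), the module $V^{(5A)}$ is realized as a quotient of $\mathrm{Ind}_{N_{S_6}(\langle f\rangle)}^{S_6}\chi$ where $\chi$ is the linear character of $N_{S_6}(\langle f\rangle)$ with kernel $N_{A_6}(\langle f\rangle)$.

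Finally, using $V^\circ = \langle w\rangle + V^{(2A)} + V^{(3A)} + V^{(4A)}$ (Corollary~\ref{coreta} together with Lemma~\ref{rel5assi}) and the explicit basis obtained in Sections~\ref{Scalar} and~\ref{moreinner}, I would evaluate $\chi_{V^\circ}$ on a representative of each conjugacy class of $S_6$ by counting fixed axes (with the appropriate signs in the $5$-axis case), and compute the multiplicities $\langle \chi_{V^\circ}, \chi_{\Sp^\lambda}\rangle$ against the irreducible characters of $S_6$. The main obstacle will be the consistency check for the $S_6$-action --- particularly the sign bookkeeping for the $5$-axes, which must be matched against the explicit relations in Lemmas~\ref{L64} and~\ref{rel5assi} and with the algebra product implicit in the Norton-Sakuma $5A$ algebras --- after which the character evaluation is mechanical and can be executed in the accompanying GAP code.
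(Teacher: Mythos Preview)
Your overall strategy is sound, but there is a gap in the well-definedness argument for $\phi_\sigma$. You claim that by Proposition~\ref{dime} the $2$-, $3$- and $4$-axes are linearly independent, so it suffices to check consistency on $V^{(5A)}$. Proposition~\ref{dime} only says that each of the sets $\Ax$, $3Y$, $4Y$ is separately independent; their union is not, since $45+40+45=130$ while $\dim U=120$ by Corollary~\ref{coreta}. So there is a $10$-dimensional space of linear relations among the $2$-, $3$- and $4$-axes, and you must verify that these relations are preserved by every odd permutation $\sigma$ before $\phi_\sigma$ is even defined on $U$. The paper handles this cleanly: it realises $U$ as the quotient $\mathfrak P/\rad(\kappa)$ of the permutation module $\mathfrak P=\mathfrak P_2\oplus\mathfrak P_3\oplus\mathfrak P_4$ by the radical of the form $\kappa$ induced by $(\ ,\ )_V$, and invokes Remark~\ref{rem2} (the inner products depend only on $\aut(A_6)$-orbits) to conclude that $\rad(\kappa)$ is an $\R[S_6]$-submodule. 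This gives the lift to $U$ at once, and $\langle w\rangle$ carries the sign representation, so the sign bookkeeping on $5$-axes that you flag as the main obstacle is bypassed entirely.

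For the decomposition itself the paper also avoids a full character computation. It identifies the $10$-dimensional overlap concretely via Pasechnik's relations \cite[Lemma~3.4]{A67}, which force $V_+^{(3^2)}\subseteq V^{(2A)}\oplus V^{(3)}$; a dimension count then yields the internal direct sum $U=V^{(2A)}\oplus V^{(3)}\oplus V_-^{(3^2)}\oplus V^{(4A)}$. Each summand is now a known permutation module (or, for $V_-^{(3^2)}$, the complement of one inside another), and the irreducible constituents are read off from \cite[Lemma~6]{FIM2} and \cite[Theorem~1.1]{FIM3} rather than computed from scratch. Your character-table approach would also succeed once the well-definedness gap is closed, but you would still need to determine the character of the $10$-dimensional kernel, which amounts to rediscovering the Pasechnik relations.
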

\begin{proof}
By Equation~(\ref{defU}) and Corollary~\ref{coreta}, 
\begin{equation} \label{pp}
V^\circ= (V^{(2A)}+V^{(3A)}+V^{(4A)})\oplus \langle w\rangle.
\end{equation}
Let $$\zeta_w\colon S_6 \to GL(\langle w \rangle)$$ be the alternating representation and denote by $\langle w \rangle_\zeta$ the corresponding $\R[S_6]$-module, so that
\begin{equation}
    \label{wsigma}
  \langle w \rangle_\zeta\cong \Sp^{(1^6)}.
\end{equation}
Further, 
for $N\in \{2,3,4\}$, let 
$\mathfrak P_N$ be the $\R[S_6]$-permutation module induced by the action by conjugation of $S_6$ on the set $\mathcal C_N$ of the cyclic subgroups of order $N$ in $A_6$, and let  
$\mathfrak P$ be the direct sum of $\R[S_6]$-modules 
$\mathfrak P_2\oplus\mathfrak P_3\oplus\mathfrak P_4$. 
Since $U$ is a homomorphic image of $\mathfrak P$ (as $\R[A_6]$-modules),  the inner product on $V$ induces in an obvious way an inner product  
$$
\kappa \colon \mathfrak P\times \mathfrak P\to \R
$$ 
on $\mathfrak P$ (see e.g.~\cite[Section 2]{A12}). By Remark~\ref{rem2}, $\rad(\kappa)$ is a $\R[S_6]$-submodule of $\mathfrak P$ and  $U\cong\mathfrak P/\rad(\kappa)$, thus the permutation representation  
$$\zeta_{\mathfrak P}\colon S_6 \to GL(\mathfrak P)$$ 
defines a representation 
$$\zeta_U\colon S_6\to GL(U).$$ 
By construction,  $\zeta_U \oplus\zeta_w$ is a representation of $S_6$ on $U \oplus \langle w \rangle\cong  V^\circ $,
whose restriction to $A_6$ is induced by the identity map on $A_6$ as the Miyamoto group of $V$, proving the first assertion. 
%%%%%%%%

Let  $N\in \{2,4\}$, by Proposition~\ref{dime}, $$
\dim(V^{(NA)})=45=|\mathcal C_N|=\dim(\mathfrak P_N),
$$ whence, by~\cite[Lemma~6]{FIM2}, 
\begin{equation}\label{mod2A}
    V^{(NA)}\cong \mathfrak P_N\cong \Sp^{(6)}\oplus \Sp^{(5,1)}\oplus 2\Sp^{(4,2)}\oplus  \Sp^{(3,2,1)}\oplus \Sp^{(2^3)} .
\end{equation}
Let now $N=3$. By Proposition~\ref{dime}, 
$$
\dim(V^{(3A)})=40=|\mathcal C_3|=\dim(\mathfrak P_3),
$$
and, since the exceptional automorphism of $A_6$ swaps $3$-cycles with cycles of type $3^2$, we have 
$$
V^{(3A)}=V^{(3)}\oplus V^{(3^2)}\:\mbox{ and }\: \dim(V^{(3)})=\dim(V^{(3^2)})=20.
$$ 
Let 
$$
V_+^{(3^2)}:=\langle u_{ab}+u_{ab^{-1}}\:|\:a,b \mbox{ disjoint } 3\mbox{-cycles in } G\rangle
$$
and 
$$
V_-^{(3^2)}:=\langle u_{ab}-u_{ab^{-1}}\:|\:a,b \mbox{ disjoint } 3\mbox{-cycles in } G\rangle .
$$
Clearly $V^{(3^2)}$ is the sum of $V_+^{(3^2)}$ and $V_-^{(3^2)}$. Since 
$$\dim (V_+^{(3^2)})=\dim(V_-^{(3^2)})=10=\tfrac{1}{2}\dim(V^{(3^2)}),
$$ we have
\begin{equation}\label{V32}
    V^{(3^2)}=V_+^{(3^2)}\oplus V_-^{(3^2)}.
\end{equation}
Since $V_+^{(3^2)}$ is isomorphic to the permutation module associated to the action of $S_6$ on the $10$ conjugates of $\langle (1,2,3), (4,5,6)\rangle$, an easy computation shows that 
\begin{equation}\label{10}
V_+^{(3^2)}\cong \Sp^{(6)}\oplus \Sp^{(4,2)} .
\end{equation} 
By~\cite[Theorem~1.1]{FIM3}, $V^{(3^2)}\cong \Sp^{(6)}\oplus \Sp^{(4,2)}\oplus \Sp^{(2^3)}\oplus \Sp^{(2,1^4)}$, whence by Equation~(\ref{V32}),
\begin{equation}
    \label{v32}
V_-^{(3^2)}\cong V^{(3^2)}/V_+^{(3^2)}\cong \Sp^{(2^3)}\oplus \Sp^{(2,1^4)}.
\end{equation}
By the Pasechnik's relations~\cite[Lemma~3.4]{A67},  $V_+^{(3^2)}\subseteq V^{(2A)}\oplus V^{(3)}$,   
whence
\begin{eqnarray*}
    \dim (U) &\leq &\dim(V^{(2A)})+\dim(V^{(3)})+\dim(V_-^{(3^2)})+\dim(V^{(4A)})\\
    &\leq&  45+20+10+45=120.
\end{eqnarray*}
By Corollary~\ref{coreta}, $\dim(U)=120$. It follows that 
\begin{eqnarray*}
    U&=&V^{(2A)}\oplus V^{(3)}\oplus V_-^{(3^2)} \oplus V^{(4A)}.
\end{eqnarray*}
Finally, since $V^{(3)}$ is isomorphic to the permutation module of $S_6$ on its subsets of order $3$, we have \begin{equation}
    \label{xxxx}
V^{(3)}\cong \Sp^{(6)}\oplus \Sp^{(5,1)}\oplus \Sp^{(4,2)}\oplus \Sp^{(3^2)}. 
\end{equation} 
The result follows by Equations~(\ref{mod2A}), (\ref{v32}), and (\ref{xxxx}).
\end{proof}

%%%%%%%%%%%%%%%%%%%%%%%%%%%%%%%%

%%%%%%%%%%%%%%%%%%%%%%%%%%%%%%
\section{The algebra closure of $V^\circ$}\label{closure}

In this section we prove that there is a unique possibility for the algebra product in $V$ and $V^\circ=V$. 

We first show that $a_t\cdot V^\circ \subseteq V^\circ$, for every $t\in G^{(2)}$.

\begin{lemma}\label{2-3-closure}
For every $t\in G^{(2)}$, $a_{t}\cdot (V^{(2A)}+V^{(3A)})\subseteq V^\circ$.
\end{lemma}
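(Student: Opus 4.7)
By linearity it suffices to show that $a_t\cdot a_s\in V^\circ$ for every $s\in G^{(2)}$ and $a_t\cdot u_h\in V^\circ$ for every $h\in G^{(3)}$. For the former, the Norton--Sakuma Theorem and Lemma~\ref{sh} tell us that $\langle\langle a_t,a_s\rangle\rangle$ is a dihedral subalgebra of one of the types $1A$, $2B$, $3A$, $4A$, or $5A$ (we do not get $6A$ by Lemma~\ref{6A}, and we do not get $2A$, $3C$ or $4B$ by the shape hypothesis and Lemma~\ref{sh}). In every such case, Table~\ref{NS} expresses $a_t\cdot a_s$ as an $\R$-linear combination of axes and of a $3$-, $4$-, or $5$-axis; all these generators lie in $V^\circ$ by its very definition.

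For the second half, observe that $a_t\cdot u_h$ lies in the subalgebra $V(\langle t,h\rangle)$. By the case list of Lemma~\ref{23}, $\langle t,h\rangle$ is isomorphic to one of $S_3$, $A_4$, $S_4$, $A_5$, and I will show that each of these subalgebras is contained in $V^\circ$. When $\langle t,h\rangle\cong S_3$, the element $h$ is inverted by $t$, so $V(\langle t,h\rangle)=\langle\langle a_t,a_{tht}\rangle\rangle$ is a Norton--Sakuma algebra of type $3A$, and Table~\ref{NS} immediately gives the product as a linear combination of axes and $u_h$, which is in $V^\circ$. When $\langle t,h\rangle\cong S_4$, Proposition~\ref{2xS4}(2) together with Corollary~\ref{4f} and Corollary~\ref{coreta}(3) shows that $V(\langle t,h\rangle)\leq U\leq V^\circ$. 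When $\langle t,h\rangle\cong A_5$, Corollary~\ref{coreta}(4) (built on Lemma~\ref{A5}) gives $V(\langle t,h\rangle)\leq V^\circ$ directly.

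The remaining case is $\langle t,h\rangle\cong A_4$. Here one uses that every $A_4$ inside $A_6$ is contained in a maximal subgroup isomorphic to $A_5$ (Proposition~\ref{maximal}(3) and the standard embedding $A_4<A_5$). Choosing such an $A_5$, call it $L$, we have $V(\langle t,h\rangle)\leq V(L)$, and $V(L)\leq V^\circ$ by the previous case. Alternatively, one can apply the explicit formula of Lemma~\ref{2-3inA5} inside $V(L)$, which writes $a_t\cdot u_h$ as an $\R$-linear combination of $2$-axes, $3$-axes, $5$-axes and the vectors $\gamma_s$; each $\gamma_s$ belongs to $U\leq V^\circ$ by Corollary~\ref{corgamma} (which itself rests on Proposition~\ref{2eta} and Corollary~\ref{coreta}(3)).

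Combining the two displayed inclusions by linearity yields $a_t\cdot(V^{(2A)}+V^{(3A)})\leq V^\circ$. The only genuinely delicate case in this plan is $\langle t,h\rangle\cong A_4$, since it is not directly a Norton--Sakuma algebra and no explicit $V(A_4)$ has been developed; reducing it to the $A_5$ case (or equivalently invoking the $\gamma$-vector analysis of Corollary~\ref{corgamma}) is the key step that makes the argument go through.
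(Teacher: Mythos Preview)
Your proof is correct and takes essentially the same approach as the paper, which compresses your case analysis into one line by observing (via Table~\ref{tau3}) that $\langle t,h\rangle$ is always contained in a maximal subgroup of $G$ isomorphic to $S_4$ or $A_5$ and then invoking Corollary~\ref{coreta}(3),(4); for $V^{(2A)}$ the paper simply appeals to the definition of $V^\circ$ as the $2$-closure. One small slip: in the $S_3$ case you wrote $\langle\langle a_t,a_{tht}\rangle\rangle$, but $tht=h^{-1}$ has order~$3$; you mean $a_{th}$ (or any involution of $\langle t,h\rangle$ other than $t$).
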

\begin{proof}
By the definition of $V^\circ$,  $a_{t}\cdot V^{(2A)}\subseteq V^\circ$. Since for every $h\in G^{(3)}$, $\langle t, h\rangle$ is contained in a maximal subgroup of $G$ isomorphic to $S_4$ or  to $A_5$ (see Table~\ref{tau3}), by Corollary~\ref{coreta},  we get $a_{t}\cdot V^{(3A)}\subseteq V^\circ$.   
\end{proof}

To deal now with $V^{(4A)}$ and $w$, we show we can find a set $\Lambda$ of vectors generating  $V^{(4A)}$ and   such that, for every $v\in \Lambda$,  the product $a_t\cdot v$ lies in $V^\circ$.

\begin{lemma}\label{some41}
Let $t\in G^{(2)}$ and $g\in G^{(4)}$ and suppose that $\langle t, g\rangle \cong S_4$. Then, with the notation of Equation~(\ref{ov}),  $a_{t}\cdot [\ov_{g}]_4 \in V^\circ$.
\end{lemma}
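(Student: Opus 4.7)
The plan is to exploit the decomposition $\ov_g = [\ov_g]_2 + [\ov_g]_3 + [\ov_g]_4$ furnished by Lemma~\ref{L64}, rewrite
$$
a_{t}\cdot [\ov_{g}]_4 \;=\; a_{t}\cdot \ov_{g} \;-\; a_{t}\cdot [\ov_{g}]_2 \;-\; a_{t}\cdot [\ov_{g}]_3,
$$
and verify that each of the three terms on the right-hand side lies in $V^\circ$ separately.

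For the first term I would argue as follows. The hypothesis $S := \langle t, g \rangle \cong S_4$ gives $t \in S^{(2)}$ and $g \in S^{(4)}$, so both $a_t$ and $\ov_g$ lie in the Majorana subalgebra $V(S)$ (the dormant $4$-axis $\ov_g$ appears in the basis of $V(S)$ described in Proposition~\ref{2xS4}(2)). Since $V(S)$ is a subalgebra, this forces $a_t \cdot \ov_g \in V(S)$. Now Proposition~\ref{2xS4}(2) shows that $V(S)$ is spanned by $2$-, $3$-, and $4$-axes, by the three dormant $4$-axes attached to $S^{(4)}$, and by the vectors $\eta_s$; every dormant $4$-axis sits in $U$ by Corollary~\ref{4f}, while Corollary~\ref{coreta}(3) places every $\eta_s$ in $U$ as well. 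Therefore $V(S) \subseteq U \subseteq V^\circ$, and in particular $a_t \cdot \ov_g \in V^\circ$.

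For the remaining two summands I would observe that $[\ov_g]_2 \in V^{(2A)}$ and $[\ov_g]_3 \in V^{(3A)}$ by the very form of those expressions in Section~\ref{moreinner}, and then invoke Lemma~\ref{2-3-closure} to conclude that both $a_t \cdot [\ov_g]_2$ and $a_t \cdot [\ov_g]_3$ belong to $V^\circ$. Summing the three contributions delivers the claim. The argument is essentially a bookkeeping reduction to the $S_4$-subalgebra structure already set up in Proposition~\ref{2xS4} and to Lemma~\ref{2-3-closure}, so I do not expect a genuine obstacle; the only subtle point worth checking is that the specific $\ov_g$ appearing in $[\ov_g]_4$ really is among the three dormant $4$-axes in the basis of $V(S)$, which is immediate from $g \in S^{(4)}$.
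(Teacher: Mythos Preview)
Your proof is correct and follows essentially the same route as the paper: decompose via Lemma~\ref{L64}, handle $a_t\cdot\ov_g$ by observing it lies in $V(S)\subseteq U\subseteq V^\circ$, and dispose of $a_t\cdot[\ov_g]_2$ and $a_t\cdot[\ov_g]_3$ with Lemma~\ref{2-3-closure}. The only difference is cosmetic: where you reconstruct $V(S)\subseteq U$ from Proposition~\ref{2xS4}(2), Corollary~\ref{4f}, and Corollary~\ref{coreta}(3), the paper simply cites Corollary~\ref{coreta}(3), whose final sentence already records $V(S)\leq U$.
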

\begin{proof}
Since $\langle t, g\rangle \cong S_4$, by Corollary~\ref{coreta}, $V^\circ$ contains $a_{t}\cdot \ov_{g}$. Hence, by Lemma~\ref{L64} and Corollary~\ref{2-3-closure},  we have
$$
a_{t}\cdot [\ov_{g}]_4=a_{t}\cdot \ov_{g}-a_{t}\cdot([\ov_{g}]_2+[\ov_{g}]_3)\in V^\circ.
$$ 
\end{proof}

\begin{lemma}\label{some4}
Let $t\in G^{(2)}$ and $f\in G^{(5)}$ and suppose that $\langle t, f\rangle$ is properly contained in  $G$. Then, with the notation of Equation~(\ref{wf4}), $a_{t}\cdot [w_f]_4 \in V^\circ$.
\end{lemma}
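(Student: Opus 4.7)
The plan is to argue along the lines of Lemma~\ref{some41}, but using Equation~(\ref{w}) of Lemma~\ref{rel5assi} in place of Equation~(\ref{ov}). Since $f$ has order $5$ and $\langle t,f\rangle$ is properly contained in $G=A_6$, inspection of the maximal subgroups of $A_6$ listed in Proposition~\ref{maximal}.(3) forces $\langle t,f\rangle\le H$ for some maximal subgroup $H\cong A_5$. Hence $a_t$ and $w_f$ both lie in the subalgebra $V(H)$, and by Corollary~\ref{coreta}.(4) we have $V(H)\subseteq V^\circ$, so $a_t\cdot w_f\in V^\circ$.

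By Lemma~\ref{rel5assi},
\[
[w_f]_4 \;=\; w_f - \tfrac{1}{36}\,w - [w_f]_2 - [w_f]_3,
\]
where $[w_f]_2\in V^{(2A)}$ and $[w_f]_3\in V^{(3A)}$; Lemma~\ref{2-3-closure} then gives $a_t\cdot [w_f]_2,\,a_t\cdot [w_f]_3\in V^\circ$. Substituting,
\[
a_t\cdot [w_f]_4 \;=\; a_t\cdot w_f\;-\;\tfrac{1}{36}\,a_t\cdot w\;-\;a_t\cdot [w_f]_2\;-\;a_t\cdot [w_f]_3,
\]
so the lemma reduces to the auxiliary claim $a_t\cdot w\in V^\circ$.

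For this residual step I would exploit the $G$-invariance of $w$: since conjugation by $G$ permutes $G^{(5)}$ and preserves the sign convention of Remark~\ref{positive}, we have $w^\sigma=w$ for every $\sigma\in G$, so in particular $\tau_{a_t}$ fixes $w$ and
\[
a_t\cdot w\;=\;(a_t,w)_V\,a_t\;+\;\tfrac{1}{4}\,w_{1/4},
\]
where $w_{1/4}$ denotes the $\tfrac{1}{4}$-eigencomponent of $w$ under $\ad_{a_t}$. The $V_1^{a_t}$-piece clearly lies in $V^\circ$; to place $w_{1/4}$ in $V^\circ$ I would split $w=\sum_{e\in G^{(5)}}w_e$ according to the conjugacy class of $\langle t,e\rangle$, handle the summands with $\langle t,e\rangle\le A_5$ inside the corresponding $V(A_5)\subseteq V^\circ$, and force the remaining $\langle t,e\rangle=G$ contributions into $V^\circ$ by an inclusion--exclusion argument over the four maximal $A_5$-subgroups of $G$ through $t$ (each yielding $\sum_{e\in H^{(5)}}a_t\cdot w_e\in V(H)\subseteq V^\circ$), combined with Corollary~\ref{coreta}.(2), which gives $w_e-w_{e'}\in U\subseteq V^\circ$ for any two positive 5-axes. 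The main obstacle is exactly this residual step: the summands $a_t\cdot w_e$ with $\langle t,e\rangle=G$ lie in no proper subalgebra already shown to be in $V^\circ$, so the argument must rely on the symmetry of $w$ and on the differences-of-5-axes identities coming from Corollary~\ref{coreta}.(2); the remainder of the proof is formal and strictly parallels Lemma~\ref{some41}.
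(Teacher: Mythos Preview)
Your reduction to the claim $a_t\cdot w\in V^\circ$ is correct and natural; indeed the paper proves the converse implication (Lemma~\ref{prodw} derives $a_t\cdot w\in V^\circ$ \emph{from} Lemma~\ref{some4}). The problem is that your sketch for this residual step is circular. Knowing $w_e-w_{e'}\in U$ from Corollary~\ref{coreta}.(2) is useless here unless you already know $a_t\cdot U\subseteq V^\circ$; but $U=V^{(2A)}+V^{(3A)}+V^{(4A)}$, and only the $V^{(2A)}+V^{(3A)}$ part is controlled by Lemma~\ref{2-3-closure}. The containment $a_t\cdot V^{(4A)}\subseteq V^\circ$ is Lemma~\ref{products2-4}, which is proved strictly after the present lemma (and in fact uses it). Likewise your inclusion--exclusion over the four $A_5$'s through $t$ accounts only for those $e$ with $\langle t,e\rangle<G$; the summands with $\langle t,e\rangle=G$ are untouched, and converting them via $w_e-w_{e'}$ lands you right back in the uncontrolled $V^{(4A)}$-part. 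The $G$-invariance of $w$ only kills the $\tfrac{1}{32}$-component and gives no handle on $w_{1/4}$.

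The paper avoids $w$ altogether. It builds an explicit subspace $\overline W_t\le V^{(4A)}$ spanned by (i) differences $[w_{f_1}]_4-[w_{f_2}]_4$ with both $\langle t,f_i\rangle<G$ or $\langle f_1^t\rangle=\langle f_2\rangle$, (ii) the $v_g$ with $\langle t,g\rangle\le S_4$, and (iii) the $\tfrac{1}{32}$-eigenvectors $v_g-v_{g^t}$. For each of these generators $a_t\cdot(-)\in V^\circ$ is immediate (via $V(A_5)$, the $\tau_{a_t}$-action, or $V(S_4)$), so $a_t\cdot\overline W_t\subseteq V^\circ$. A GAP dimension count shows $\dim\overline W_t=36$ and that $[w_f]_4\in\overline W_t$ for every $f$ with $\langle t,f\rangle<G$, which finishes the proof. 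The key idea you are missing is to stay inside $V^{(4A)}$ and exhibit $[w_f]_4$ as a combination of $4$-axis vectors whose products with $a_t$ are \emph{individually} already known, rather than passing through the global vector $w$.
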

\begin{proof}
Set
$$
E_t:=\{(f_1,f_2)\in G^{(5)}\times G^{(5)}\:|\:  \langle t, f_i \rangle < G,\;\forall  i \in \{1,2\}\}.
$$
and $$
F_t:=\{(f_1,f_2)\in G^{(5)}\times G^{(5)}\:|\: \langle f_1^t\rangle =\langle f_2\rangle \}.
$$ 
We first prove that
\begin{equation}
\label{f1f2}
\mbox{ for every } (f_1, f_2)\in E_t\cup F_t,\:\: a_{t}\cdot (w_{f_1}-w_{f_2})\in V^\circ.
\end{equation}
Assume $(f_1,f_2)\in E_t$ and let $H_i$ be a maximal subgroup of $G$ containing  $\langle t, f_i \rangle$. Then $H_i \cong A_5$ and 
$a_{t}$ and $w_{f_i}$ are contained in the subalgebra $V( H_i)$. Hence, by Lemma~\ref{A5}, there is a unique possibility for the product $a_{t}\cdot w_{f_i}$, and, by Corollary~\ref{coreta}.(4),  
this product belongs to $V^\circ$. 
Next assume $(f_1,f_2)\in F_t$. 
Since $\langle f_1^t\rangle =\langle f_2\rangle $, by the choice of the set $G^{(5)}$, $f_1$ and $f_2$ are conjugate in $G$. Hence, either $f_1^t=f_2$, or $f_1^t=f_2^{-1}$. In both cases $(w_{f_1})^t=w_{f_1^t}=w_{f_2}$. Thus $w_{f_1}-w_{f_2}$ is a $\tfrac{1}{32}$-eigenvector for $\ad_{a_{t}}$ and so  $a_{t}\cdot (w_{f_1}-w_{f_2})=\tfrac{1}{32}(w_{f_1}-w_{f_2})\in V^\circ$, proving~(\ref{f1f2}). 

Set
\begin{eqnarray*}
W_t&:=&\langle [w_{f_1}]_4-[w_{f_2}]_4 \:\mid \: (f_1,f_2)\in E_t\cup F_t\rangle,\\
&\\
W_t^{\prime}
&:=&\langle v_g \:\mid \: \langle t, g\rangle \cong D_8\mbox{ or } \langle t, g\rangle \cong  S_4\rangle,\\
&\\
W_t^{\prime\prime}
&:=&\langle v_g-v_{g^t} \:|\: g\in G^{(4)}\rangle, \mbox{ and }\\
&\\
\overline W_t&:=&W_t+W_t^{\prime}+W_t^{\prime\prime}.\\
\end{eqnarray*}
A direct calculation using GAP (see~\cite[algebraA6(2B).g]{code}) shows that, for $f\in G^{(5)}$  such that   
$\langle t, f \rangle < G$, we have
$
\dim(\langle [w_{f}]_4, \overline W_t\rangle)=\dim(\overline W_t)=36,
$
whence 
\begin{equation}
    \label{xud}
[w_{f}]_4\in  \overline W_t.
\end{equation}
By Equation~(\ref{w}), for every $(f_1, f_2)\in G^{(5)}\times G^{(5)}$, we have 
$$a_{t}\cdot ([w_{f_1}]_4-[w_{f_2}]_4)=a_{t}\cdot (w_{f_1}-w_{f_2})-a_{t}\cdot ([w_{f_1}]_2+[w_{f_1}]_3-[w_{f_2}]_2-[w_{f_2}]_3),$$
whence,  by Equation~(\ref{f1f2}) and Corollary~\ref{2-3-closure}, we get 
%and Equation~(\ref{f1f2}), it follows that 
\begin{equation}\label{f1f2bis}
a_{t}\cdot W_t\subseteq V^\circ.
\end{equation}
By claim {\it (3)} of Corollary~\ref{coreta}, we have also 
\begin{equation}\label{f1f2bisbis}
a_{t}\cdot W_t^{\prime}\subseteq V^\circ,
\end{equation}
and finally 
\begin{equation}\label{f1f2bisbisbis}
a_{t}\cdot W_t^{\prime\prime}\subseteq V^\circ,
\end{equation}
since $v_g-v_{g^t}$ is a $\tfrac{1}{32}$-eigenvector for $\ad_{a_{t}}$. Thus, by Equations~(\ref{xud}), (\ref{f1f2bis}), (\ref{f1f2bisbis}), and~(\ref{f1f2bisbisbis}),    $a_{t}\cdot [w_f]_4\in a_{t}\cdot {\overline W}_t\subseteq V^\circ$.
\end{proof}

\begin{lemma}\label{prodw}
Let $w$ be as in Equation~(\ref{somma5}). For every $t\in G^{(2)}$, $a_{t}\cdot w\in V^\circ$.
\end{lemma}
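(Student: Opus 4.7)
The plan is to localize the global sum $w$ to a single 5-axis $w_f$ via Lemma~\ref{rel5assi}, chosen so that $t$ and $f$ together generate a proper subgroup of $G$; then each piece of the resulting decomposition of $a_t \cdot w$ lands in $V^\circ$ by results already established.

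First I would choose $f$. Since $t$ is a bitransposition, it fixes two points; relabelling, we may take $t=(1,2)(3,4)$ and pick $f=(1,2,3,4,5)\in A_6^{(5)}$, so that $\langle t,f\rangle \le \mathrm{Stab}_{A_6}(6)\cong A_5$, in particular $\langle t,f\rangle < G$. Let $H$ be a maximal subgroup of $G$ isomorphic to $A_5$ with $\langle t,f\rangle\le H$. Replacing $f$ by an appropriate power or conjugate if necessary, we may assume $f\in G^{(5)}$.

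Next I would invert Lemma~\ref{rel5assi}. That lemma gives
\begin{equation*}
w \;=\; 36\bigl(w_f \;-\; [w_f]_2 \;-\; [w_f]_3 \;-\; [w_f]_4\bigr),
\end{equation*}
hence
\begin{equation*}
a_t\cdot w \;=\; 36\,a_t\cdot w_f \;-\; 36\,a_t\cdot [w_f]_2 \;-\; 36\,a_t\cdot [w_f]_3 \;-\; 36\,a_t\cdot [w_f]_4.
\end{equation*}
It therefore suffices to show that each of the four summands on the right lies in $V^\circ$.

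Finally I would verify the four inclusions using prior results. For $a_t\cdot w_f$: both $a_t$ and $w_f$ belong to the subalgebra $V(H)$, and by Corollary~\ref{coreta}.(4) we have $V(H)\le V^\circ$, so $a_t\cdot w_f\in V^\circ$. For $a_t\cdot [w_f]_2$: by Equation~(\ref{wf2}), $[w_f]_2\in V^{(2A)}$, and each product $a_t\cdot a_s$ with $s\in G^{(2)}$ lies in the $2$-closed Norton-Sakuma algebra $\langle\langle a_t,a_s\rangle\rangle\subseteq V^\circ$. For $a_t\cdot [w_f]_3$: by Equation~(\ref{wf3}), $[w_f]_3\in V^{(3A)}$, and Lemma~\ref{2-3-closure} gives $a_t\cdot V^{(3A)}\subseteq V^\circ$. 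For $a_t\cdot [w_f]_4$: this is exactly the content of Lemma~\ref{some4}, which applies because $\langle t,f\rangle$ is properly contained in $G$. All four terms lie in $V^\circ$, proving the claim. There is no real obstacle in this proof; the one ingredient that had to be in place beforehand is Lemma~\ref{some4}, whose proof (treating the $4$-axis part) was the technically delicate step.
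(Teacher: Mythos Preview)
Your proof is correct and follows essentially the same approach as the paper: invert Lemma~\ref{rel5assi} to express $w$ in terms of a single $w_f$ and the correction terms $[w_f]_N$, then show $a_t$ multiplies each piece into $V^\circ$ via Lemma~\ref{2-3-closure} and Lemma~\ref{some4}. The only difference is the choice of $f$: the paper picks $f$ with $f^t=f^{-1}$, so that $a_t\cdot w_f$ lies directly in the Norton--Sakuma algebra of type $5A$ (hence in $V^\circ$ by Table~\ref{NS}), whereas you take $f$ with $\langle t,f\rangle\le A_5$ and invoke the heavier Corollary~\ref{coreta}.(4) to get $a_t\cdot w_f\in V(H)\subseteq V^\circ$; both choices satisfy the hypothesis of Lemma~\ref{some4}, so the argument goes through either way.
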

\begin{proof}
Let $f\in G^{(5)}$ be such that $f^t=f^{-1}$. Then, $w_f$ is contained in the subalgebra $\langle \langle a_{t}, a_{tf}\rangle \rangle$, which is of type $5A$.  By Table~\ref{NS}, $a_{t}\cdot w_f\in V^\circ$ . Hence, by Equation~(\ref{w}), Corollary~\ref{2-3-closure} and Lemma~\ref{some4}, we have
$$
a_{t}\cdot w=36a_{t}\cdot (w_f-[w_f]_2-[w_f]_3-[w_f]_4)\in V^\circ .
$$
\end{proof}

\begin{lemma}\label{products2-4}
For every $t\in G^{(2)}$, $a_{t}\cdot V^{(4A)}\subseteq V^\circ$.
\end{lemma}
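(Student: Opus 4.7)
The plan is to follow exactly the pattern of Lemma~\ref{some4}, defining a subspace $\Omega_t \leq V^{(4A)}$ consisting of vectors $v$ for which the containment $a_t\cdot v \in V^\circ$ is already accounted for by earlier results, and then checking by a GAP computation that $\Omega_t = V^{(4A)}$. Fix $t\in G^{(2)}$ and take $\Omega_t$ to be the linear span of the following families of vectors:
\begin{enumerate}
\item[(i)] $v_g$ for every $g\in G^{(4)}$ with $g^2=t$ or $\langle t,g\rangle \cong D_8$ (either $a_t\cdot v_g=0$ by Corollary~\ref{product44f}, or $v_g$ lies in the Norton--Sakuma algebra $\langle \langle a_t,a_{tg}\rangle \rangle$ of type $4A$, whose multiplication is given by Table~\ref{NS});
\item[(ii)] $v_g$ for every $g\in G^{(4)}$ with $\langle t,g\rangle \cong S_4$ (by Corollary~\ref{coreta}.(3), the whole subalgebra $V(\langle t,g\rangle)$ is contained in $U\leq V^\circ$);
\item[(iii)] the $\tfrac{1}{32}$-eigenvectors $v_g-v_{g^t}$ for every $g\in G^{(4)}$, whose product with $a_t$ equals $\tfrac{1}{32}(v_g-v_{g^t})\in V^{(4A)}\subseteq V^\circ$;
\item[(iv)] the auxiliary vectors $[\overline v_l]_4$ for every $l\in G^{(4)}$ with $\langle t,l\rangle\cong S_4$, which lie in $\Omega_t$ by Lemma~\ref{some41};
\item[(v)] the auxiliary vectors $[w_f]_4$ for every $f\in G^{(5)}$ with $\langle t,f\rangle < G$, which lie in $\Omega_t$ by Lemma~\ref{some4}.
\end{enumerate}

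By construction $a_t\cdot \Omega_t \subseteq V^\circ$, since for the families (iv) and (v) the relevant product was already shown to lie in $V^\circ$, while families (i)--(iii) reduce to Norton--Sakuma computations, to the inclusion $V(S)\leq U$, and to the fusion law, respectively. The only remaining cases needed to cover an arbitrary $v_g$ are those in which $\langle t,g\rangle$ is isomorphic to $3^2{:}4$, to a cyclic $4$-group, or to $A_6$; the cyclic case forces $t=g^2$ and is already in (i), so the issue is to reach the two orbits of $g$ with $\langle t,g\rangle\cong 3^2{:}4$ or $A_6$ using linear combinations from (i)--(v). The expressions for $[\overline v_l]_4$ and $[w_f]_4$ written out in Section~\ref{moreinner} and Section~\ref{5assi} involve precisely $4$-axes $v_h$ of all the above types, so the additional relations they provide are exactly what one needs to break out of the orbits accessible by (i)--(iii).

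The concluding step is to verify that $\dim \Omega_t = 45 = \dim V^{(4A)}$: this I would check with the same GAP procedure used for the analogous spanning argument in the proof of Lemma~\ref{some4}, working in the Gram matrix relative to the basis of $V^{(4A)}$ provided by Proposition~\ref{dime}. Once equality is established one has $\Omega_t=V^{(4A)}$, and hence $a_t\cdot V^{(4A)}\subseteq V^\circ$ as required; by the transitivity of $G$ on involutions the choice of $t$ is immaterial. The main obstacle is the dimension count in (iv) and (v): it is not \emph{a priori} clear that the auxiliary vectors $[\overline v_l]_4$ and $[w_f]_4$, when added to the ``obvious'' vectors in (i)--(iii), suffice to fill out the full $45$-dimensional space $V^{(4A)}$, and this is the part that is pushed to the computer algebra verification.
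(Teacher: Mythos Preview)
Your overall strategy---build a subspace $\Omega_t\leq V^{(4A)}$ on which $a_t$ is already known to act into $V^\circ$, then verify $\dim\Omega_t=45$ by machine---is exactly the paper's approach, and your families (i)--(iii) and (v) coincide with the paper's $W_t'$, $W_t''$, and $R_t$. The real difference is in family (iv). The paper does \emph{not} use the vectors $[\overline v_l]_4$ coming from Lemma~\ref{some41}; instead it takes the two $S_4$-subgroups $K_1,K_2$ whose derived subgroup contains $t$, the two $A_5$-subgroups $L_i,L_i^\ast$ above each $K_i'$, and forms
\[
Q_t:=\bigl\langle\,[\gamma_{t,i}]_4^{\,x}\ \bigm|\ x\in L_i\cup L_i^\ast,\ i\in\{1,2\}\,\bigr\rangle,
\]
using Corollary~\ref{corgamma}. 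The point is that both $a_t$ and $\gamma_{t,i}$ lie in $V(L_i)\cap V(L_i^\ast)$, so for any $x\in L_i\cup L_i^\ast$ the product $a_t\cdot(\gamma_{t,i})^x$ stays inside one of these $A_5$-subalgebras, hence in $V^\circ$ by Corollary~\ref{coreta}.(4). The GAP check in the paper is that $\dim(Q_t+\overline R_t)=45$, which is a computation for a \emph{different} set of auxiliary vectors than yours. Your family (iv) may well also push the dimension to $45$, but that is a separate machine verification you have not carried out, and the fact that the authors reached for the $A_5$-conjugates of $[\gamma_{t,i}]_4$ rather than the more directly available $[\overline v_l]_4$ is at least mild evidence that the latter might not suffice on their own.

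One small slip: in (i) you invoke Corollary~\ref{product44f} for $a_t\cdot v_g=0$ when $t=g^2$, but that corollary is about the dormant axis $\overline v_g$, not $v_g$. The correct justification is $(a_{g^2},v_g)_V=0$ from Table~\ref{tau4} together with Lemma~\ref{idem}.
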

\begin{proof}
Let $t\in G^{(2)}$ and let $K_1$ and $K_2$ be the two subgroups of $G$ isomorphic to $S_4$ such that $t$ is contained in the derived subgroup of $K_i$, $i\in \{1,2\}$. Let $\gamma_{t,i}$ and $[\gamma_{t,i}]_N$, for $N\in \{2,3,4\}$, be as in Corollary~\ref{corgamma}. Moreover, for $i\in \{1,2\}$, let $L_i$ and $L_i^\ast$ be the two subgroups of $G$ isomorphic to $A_5$ containing the derived subgroup of $K_i$.
Let
\begin{equation}\label{}
Q_t:=\langle [\gamma_{t,i}]_4^x \:|\:  x\in L_i\cup L_i^\ast, i\in \{1,2\}\rangle .
\end{equation}
For each $i\in \{1, 2\}$, the vectors $a_{t}$ and $\gamma_{t,i}$ belong to $V(L_i)\cap V( L_i^\ast)$. Thus, for each $x\in L_i\cup L_i^\ast$, we have 
 $(\gamma_{t,i})^x\in V(L_i)\cup V(L_i^\ast)$ and so, by Corollary~\ref{coreta},  
 $a_{t}\cdot (\gamma_{t,i})^x\in V(L_i)\cup V(L_i^\ast)\subseteq V^\circ$.  Thus, by Corollaries~\ref{corgamma} and~\ref{2-3-closure}, 
\begin{equation}\label{q}
  a_{t}\cdot Q_t\subseteq  V^\circ.  
\end{equation}
Let $W_t^\prime$ and  $W_t^{\prime\prime}$ be the subspaces defined in the proof of Lemma~\ref{some4} and set 
$$
R_t:=\langle [w_f]_4\:|\: f\in G^{(5)}, \langle t,f\rangle \le G\rangle\:
\mbox{
and }\:
\overline R_t:=R_t+W_t^\prime+W_t^{\prime\prime}.
$$
As in the proof of Lemma~\ref{some4}, by Equation~(\ref{q}), Lemma~\ref{some4} and  Corollary~\ref{coreta}, $a_{t}\cdot (Q_t+\overline R_t)\subseteq V^\circ$.
A direct computation in GAP (see~\cite[algebraA6(2B).g]{code}) shows that 
$$
\dim(Q_t+\overline R_t)=45.
$$
Since $Q_t+\overline R_t \leq V^{(4A)}$ and, by Proposition~\ref{dime}, $\dim(V^{(4A)})=45$, 
it follows that $Q_t+\overline R_t=V^{(4A)}$ and the result follows.
\end{proof}

\begin{proposition}\label{aV}
For every $t\in G^{(2)}$, $a_{t}\cdot V^\circ \subseteq V^\circ$.
\end{proposition}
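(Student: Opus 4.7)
The plan is to observe that this proposition is essentially a bookkeeping consequence of the decomposition of $V^\circ$ established in Corollary~\ref{coreta} together with the three product-closure lemmas immediately preceding the statement. Specifically, by Corollary~\ref{coreta}.(1) we have the decomposition
\begin{equation*}
V^\circ = V^{(2A)} + V^{(3A)} + V^{(4A)} + \langle w\rangle,
\end{equation*}
so by linearity of the product it is enough to check that $a_t$ multiplies each of these four summands into $V^\circ$.

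The first step is to invoke Lemma~\ref{2-3-closure}, which handles $a_t\cdot V^{(2A)} \subseteq V^\circ$ (trivially from the definition of $V^\circ$) and $a_t\cdot V^{(3A)}\subseteq V^\circ$ (because every pair $\langle t,h\rangle$ with $h\in G^{(3)}$ sits in a maximal $S_4$ or $A_5$ subgroup, whose subalgebras lie in $V^\circ$ by Corollary~\ref{coreta}.(3)–(4)). The second step is to cite Lemma~\ref{products2-4}, which gives $a_t\cdot V^{(4A)}\subseteq V^\circ$. The third step is Lemma~\ref{prodw}, which gives $a_t\cdot w\in V^\circ$. Combining these three inclusions with the decomposition above yields the conclusion.

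There is essentially no obstacle at this point; all the genuine difficulty has already been overcome in the earlier lemmas. The substantive work was building up enough eigenvector identities and Pasechnik-style relations to control $a_t\cdot V^{(4A)}$ (where the vectors $\ov_g$ are not \emph{a priori} in $V^\circ$, and must be eliminated via the expression~(\ref{ov}) of Lemma~\ref{L64}) and $a_t\cdot w$ (which required enumerating enough \mbox{$5$-axis} differences inside $A_5$-subalgebras to reduce to pieces already known to lie in $V^\circ$). Once those two key facts are in hand, the present proposition is a one-line assembly.
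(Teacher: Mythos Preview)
Your proof is correct and matches the paper's own argument exactly: the paper simply cites Corollary~\ref{coreta}.(1), Lemma~\ref{2-3-closure}, Lemma~\ref{prodw}, and Lemma~\ref{products2-4} and concludes. The additional commentary you give about where the real work lay is accurate but unnecessary for the proof itself.
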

\begin{proof}
The result follows immediately by Corollary~\ref{coreta}.(1), Corollary~\ref{2-3-closure},  Lemma~\ref{prodw}, and Lemma~\ref{products2-4}.
\end{proof}

We can now use the resurrection principle in its simpler version of~\cite[Lemma~1.8]{IPSS} to get the remaining products between two odd axes.

\begin{lemma}\label{oddprod1}
Let $h,k\in G$ such that $x_h$ and $x_k$ are odd axes of $V$.  Suppose there is $t\in G^{(2)}$ inverting by conjugation both $h$ and $k$. Then $x_h\cdot x_k\in V^\circ$. 
\end{lemma}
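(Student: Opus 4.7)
Since $t\in G^{(2)}$ inverts both $h$ and $k$ by conjugation, by Axiom~(M8) together with Remark~\ref{remG}, the Miyamoto involution $\tau_t$ fixes $x_h$ and $x_k$. Consequently, both vectors belong to $V_1^{a_t}\oplus V_0^{a_t}\oplus V_{1/4}^{a_t}$, with no component in the $\tfrac{1}{32}$-eigenspace of $\ad_{a_t}$.

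The axes $a_t,a_{th}$ (resp.\ $a_t,a_{tk}$) generate a Norton--Sakuma subalgebra of type $|h|A$ (resp.\ $|k|A$), contained in $V^\circ$ and containing $x_h$ (resp.\ $x_k$). From~\cite[Table~4]{IPSS}, one extracts a $0$-eigenvector $e_h\in V_0^{a_t}$ inside the $h$-dihedral and a $\tfrac{1}{4}$-eigenvector $\hat e_k\in V_{1/4}^{a_t}$ inside the $k$-dihedral of the form
\[
e_h=\alpha_h x_h+L_h,\qquad \hat e_k=\beta_k x_k+M_k,
\]
with $\alpha_h,\beta_k\ne 0$ and $L_h,M_k$ explicit $\R$-linear combinations of axes in their respective dihedral algebras.

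Solving for $x_h$ and $x_k$ and expanding yields
\[
\alpha_h\beta_k\,(x_h\cdot x_k)\;=\;e_h\cdot\hat e_k\;-\;e_h\cdot M_k\;-\;L_h\cdot\hat e_k\;+\;L_h\cdot M_k.
\]
Each of the three right-hand summands beyond $e_h\cdot\hat e_k$ expands, after substituting the axis-expressions for $L_h,M_k,e_h,\hat e_k$, into a sum of products of the form $a_s\cdot a_{s'}$, $a_s\cdot x_h$, or $a_s\cdot x_k$, all of which lie in $V^\circ$ by Proposition~\ref{aV} and by the defining property of $V^\circ$ as the $2$-closure.

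The remaining term $e_h\cdot\hat e_k$ is handled by the simpler version of the resurrection principle~\cite[Lemma~1.8]{IPSS}. The fusion law $0\star\tfrac{1}{4}=\{\tfrac{1}{4}\}$ places $e_h\cdot\hat e_k$ in $V_{1/4}^{a_t}$, and the identity $a_t\cdot(e_h\cdot v)=e_h\cdot(a_t\cdot v)$, valid for $e_h\in V_0^{a_t}$ and arbitrary $v\in V$, lets one rewrite $a_t\cdot(e_h\cdot\hat e_k)$ via $a_t\cdot\hat e_k=\tfrac14\hat e_k$; combining this eigenvalue equation with the axis-plus-odd-axis decompositions of $e_h$ and $\hat e_k$ forces $e_h\cdot\hat e_k$ to coincide with a known linear combination of products of axes and products of axes with odd axes, all lying in $V^\circ$ by Proposition~\ref{aV}. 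The main obstacle is precisely this last reduction: the fusion law alone only confines $e_h\cdot\hat e_k$ to an eigenspace, and pinning it down inside $V^\circ$ relies essentially on the explicit dihedral-algebra structure supplied by~\cite[Table~4]{IPSS}, which ensures that after cancelling the eigenvector contributions, no term outside the span of axes and odd axes survives.
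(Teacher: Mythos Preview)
Your argument has a genuine gap in the final step. You correctly reduce $x_h\cdot x_k$ to the single unknown term $e_h\cdot\hat e_k$, and you correctly observe that the fusion law forces $e_h\cdot\hat e_k\in V_{1/4}^{a_t}$. But this eigenspace information alone does not place $e_h\cdot\hat e_k$ inside $V^\circ$: Proposition~\ref{aV} only says $a_t\cdot V^\circ\subseteq V^\circ$, not that $V_{1/4}^{a_t}\subseteq V^\circ$. The identity $a_t\cdot(e_h\cdot\hat e_k)=e_h\cdot(a_t\cdot\hat e_k)=\tfrac14 e_h\cdot\hat e_k$ that you invoke is simply a restatement of the eigenvalue condition and yields nothing new. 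Your closing sentences gesture at a cancellation that would leave only products already known to lie in $V^\circ$, but no such cancellation occurs: the unknown $x_h\cdot x_k$ sits inside $e_h\cdot\hat e_k$ with nonzero coefficient and cannot be isolated from a single eigenvalue equation.

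What the resurrection principle of~\cite[Lemma~1.8]{IPSS} actually needs, and what the paper supplies, is a \emph{second} eigenvector from the $k$-dihedral: a $0$-eigenvector $e_k$ as well as your $\tfrac14$-eigenvector $\tilde e_k$, both normalised so that $x_k$ appears with the same coefficient. Then $e_h\cdot e_k=x_h\cdot x_k+y$ lies in $V_0^{a_t}$ and $e_h\cdot\tilde e_k=x_h\cdot x_k+z$ lies in $V_{1/4}^{a_t}$, with $y,z\in V^\circ$ by Proposition~\ref{aV}. Subtracting the two eigenvalue equations $a_t\cdot(x_h\cdot x_k+y)=0$ and $a_t\cdot(x_h\cdot x_k+z)=\tfrac14(x_h\cdot x_k+z)$ eliminates the unknown $a_t\cdot(x_h\cdot x_k)$ and yields $x_h\cdot x_k=-4a_t\cdot(y-z)-z\in V^\circ$. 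Your setup is missing precisely this second equation.
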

\begin{proof}
 $\langle \langle a_{t}, x_h\rangle\rangle$ is contained in a Norton-Sakuma algebra of type $|h|A$ and similarly $\langle \langle a_{t},x_k\rangle\rangle$ is contained in a Norton-Sakuma algebra of type $|k|A$. Let $e_h$ be a $0$-eigenvector for $\ad_{a_{t}}$ and let $e_k$ and $\tilde e_k$ be a $0$-eigenvector and a $ \tfrac{1}{4}$-eigenvector  for $\ad_{a_{t}}$, respectively. Using~\cite[Table 4]{IPSS}, we can express $e_h$ (resp. $e_k$ and $\tilde e_k$) as a linear combination of axes and $x_h$ (resp. $x_k$ and $\tilde x_k$).  It follows that there  exist $y$ and $z$ in $V^\circ$, such that
$$
e_h\cdot e_k=x_h\cdot x_k + y\: \mbox{ and } \:e_h\cdot \tilde e_k=x_h\cdot x_k + z.
$$
By the fusion law, $e_h\cdot e_k$ is a $0$-eigenvector for $\ad_{a_{t}}$, while $e_h\cdot e_k$ is a $\tfrac{1}{4}$-eigenvector for $\ad_{a_{t}}$.
By~\cite[Lemma~1.8]{IPSS} and Proposition~\ref{aV}, we have 
$$
x_h\cdot x_k =-[4a_{t}\cdot (y-z)+z]\in V^\circ .
$$
\end{proof}

\begin{cor}\label{V3Vc}
    $V^{(3)}\cdot V^\circ\subseteq V^\circ$
\end{cor}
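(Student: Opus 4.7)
The plan is to verify $u_h \cdot y \in V^\circ$ for every $3$-axis $u_h$ with $h$ a $3$-cycle and for $y$ ranging over a spanning set of $V^\circ$. By Corollary~\ref{coreta}.(1), I take this spanning set to consist of the $2$-axes, the $3$-axes, the $4$-axes and the $5$-axes. The case $y = a_t$ is immediate from Proposition~\ref{aV} together with the commutativity of the product.

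For $y$ an odd axis, the main tool is Lemma~\ref{oddprod1}: if there exists $t \in G^{(2)}$ inverting both $h$ and the element $k$ whose axis equals $y$, then $u_h \cdot y \in V^\circ$. The plan is to verify, by a short case analysis, that such a $t$ exists in all but one configuration. For $k$ of order $3$ (cycle type either $(3,1^3)$ or $3^2$), splitting on $|\mathrm{supp}(h) \cap \mathrm{supp}(k)|$ exhibits an explicit bitransposition in $A_6$ inverting both elements. For $k$ of order $5$, the five dihedral reflections of the cyclic order of $\langle k\rangle$ exhaust the inverting involutions of $k$ in $A_6$; according to whether the unique fixed point of $k$ lies in $\mathrm{supp}(h)=\{1,2,3\}$ or not, at least one of them has the form $(i,j)(\ell,m)$ with $\{i,j\} \subseteq \{1,2,3\}$ and $\{\ell,m\} \subseteq \{4,5,6\}$ and hence inverts $h$. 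For $k = g$ of order $4$, writing $g = (a_1,a_2,a_3,a_4)(b_1,b_2)$ makes its four inverting involutions in $A_6$ explicit, and an elementary inspection shows that a common $t$ exists whenever $\langle h, g\rangle \not\cong 3^2{:}4$.

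The one exceptional configuration is therefore $\langle h, g\rangle \cong 3^2{:}4$: after conjugation, this arises precisely when the $4$-cycle of $g$ alternates between $\{1,2,3\}$ and $\{4,5,6\}$. In this case $g^2 = (a_1,a_3)(a_2,a_4)$ has $\{a_1,a_3\} \subseteq \{1,2,3\}$ and $\{a_2,a_4\} \subseteq \{4,5,6\}$ (or vice versa), so $g^2$ lies in $N_{A_6}(\langle h\rangle)\setminus C_{A_6}(h)$ and therefore inverts $h$. Moreover $(a_{g^2}, v_g)_V = 0$ by the fourth row of Table~\ref{tau4}, whence $a_{g^2} \cdot v_g = 0$ by Lemma~\ref{idem}, so $v_g$ is itself a $0$-eigenvector for $\mathrm{ad}_{a_{g^2}}$. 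Setting $t := g^2$, I would then repeat the resurrection strategy of the proof of Lemma~\ref{oddprod1}: the Norton--Sakuma subalgebra $\langle\langle a_t, a_{th}\rangle\rangle$ is of type $3A$ and contains $u_h$, and \cite[Table~4]{IPSS} supplies a $0$-eigenvector $\beta$ and a $\tfrac{1}{4}$-eigenvector $\hat\beta$ for $\mathrm{ad}_{a_t}$, each of the form $u_h+(\text{linear combination of $2$-axes})$. The fusion law then forces $\beta\cdot v_g$ to be a $0$-eigenvector and $\hat\beta\cdot v_g$ to be a $\tfrac{1}{4}$-eigenvector for $\mathrm{ad}_{a_t}$; at the same time, expanding each of them and applying Proposition~\ref{aV} to the $2$-axis-times-$v_g$ terms shows that both equal $u_h\cdot v_g$ plus an element of $V^\circ$. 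The resurrection principle~\cite[Lemma~1.8]{IPSS} then gives $u_h\cdot v_g \in V^\circ$ exactly as in Lemma~\ref{oddprod1}.

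The main obstacle is identifying and handling this exceptional $3^2{:}4$ case; the very two features missing in it for a direct use of Lemma~\ref{oddprod1}, namely that $g^2$ inverts $h$ and that $v_g$ is orthogonal to $a_{g^2}$, happen to be present and precisely suffice for the resurrection argument to close the gap. The remaining cases are routine, and the products $u_h \cdot w_f$ for $f$ of order $5$ are handled uniformly by Lemma~\ref{oddprod1}.
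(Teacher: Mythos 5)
Your proof is correct, and it is in fact more careful than the paper's own argument, which disposes of this corollary in one line by asserting that for every $h\in G^{(3)}$ and every $g\in G^{(4)}$ there is a bitransposition inverting both, and then invoking Lemma~\ref{oddprod1}. Exactly as you observe, that assertion fails when $\langle h,g\rangle\cong 3^2:4$: for $h=(1,2,3)$ and $g=(1,4,2,5)(3,6)$, say, the four involutions of $A_6$ inverting $g$ are $(1,2)(3,6)$, $(4,5)(3,6)$, $(1,4)(2,5)$, $(1,5)(2,4)$, and none of them stabilizes the support $\{1,2,3\}$ of $h$, so none inverts $h$; the same happens for every order-$4$ element of the normalizer of the Sylow $3$-subgroup containing $h$. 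So the common-inverting-involution claim in the paper's proof (and in the proof of Lemma~\ref{33}) is simply false for this one configuration, and your patch is exactly the right repair: $t:=g^2$ does invert $h$, $v_g$ is a genuine $0$-eigenvector of $\ad_{a_{g^2}}$ by the fourth row of Table~\ref{tau4} together with Lemma~\ref{idem}, and running the resurrection principle with the $0$- and $\tfrac{1}{4}$-eigenvectors of the $3A$-algebra $\langle\langle a_{g^2},a_{g^2h}\rangle\rangle$ containing $u_h$ yields $u_h\cdot v_g\in V^\circ$. This is precisely the device the paper itself deploys elsewhere (Lemma~\ref{innerf2}(7) for the inner product in the same $3^2:4$ configuration, and Proposition~\ref{oddprod2}(6)), so your argument stays entirely within the paper's toolkit. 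The remaining cases of your analysis (two $3$-elements, a $3$-element with a $5$-element, and the $S_4$ and $A_6$ configurations with a $4$-element) do admit common inverting bitranspositions, so Lemma~\ref{oddprod1} applies there as you say. In short: same overall route as the paper, plus one genuinely necessary extra step that the paper's one-line proof silently skips.
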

\begin{proof}
 The result follows by Lemma~\ref{oddprod1} since, for every $h,k\in G^{(3)}$, $g\in G^{(4)}$, and $f\in G^{(5)}$ there exists $t\in G^{(2)}$ inverting by conjugation both $h$ and $k$, or $h$ and $g$, or $h$ and $f$.   
\end{proof}

\begin{proposition}\label{oddprod2}
For every 
$g,l\in G^{(4)}$ and $f_1, f_2\in G^{(5)}$ the following products  belong to $V^\circ$:
\begin{enumerate}
\item $v_g\cdot v_g$ and $w_{f_1}\cdot w_{f_1}$;
\item $v_g\cdot v_l$,  if $\langle g, l\rangle \leq S_4$;
\item $v_g\cdot v_l$,  if $\langle g, l\rangle\cong A_6$ and $l\in G^{(4)}_5(g)\cap G^{(4)}_5(g^{-1})$;
\item $v_g\cdot w_{f_1}$,  if $\{f_1, f_1^2\}\cap G^{(5)}_5(g)\cap G^{(5)}_3(g^{-1})\neq \emptyset$;
\item $w_{f_1}\cdot w_{f_2}$,  if $f_2\not \in G^{(5)}_5(f_1)\cap G^{(5)}_5(f_1^{-1})$;
\item $v_g\cdot w_{f_1}$,  if $f_1\in G^{(5)}_5(g)\cap G^{(5)}_5(g^{-1})$.
\end{enumerate}
\end{proposition}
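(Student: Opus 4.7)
The plan is to split the statement into the six given cases, handled by progressively more delicate techniques: (1) and (2) are immediate; (3)--(5) follow from the common-inverting-involution resurrection of Lemma~\ref{oddprod1}; case (6) is the principal obstacle and needs a variant of the resurrection based on the axis $a_{g^2}$.

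For (1), Table~\ref{NS} gives directly $v_g\cdot v_g=v_g$ and $w_{f_1}\cdot w_{f_1}=\tfrac{5^2\cdot 7}{2^{19}}(a_0+a_1+a_{-1}+a_2+a_{-2})\in V^\circ$. For (2), $v_g,v_l\in V(S)\le V^\circ$ for some $S\le G$ with $S\cong S_4$ containing $\langle g,l\rangle$ (Corollary~\ref{coreta}(3)); since $V(S)$ is closed under multiplication, $v_g\cdot v_l\in V^\circ$. For (3)--(5), I would exhibit a $t\in G^{(2)}$ inverting both elements simultaneously and apply Lemma~\ref{oddprod1}. Representative choices: for (3), $t=(1,2)(3,4)$ inverts $(g,l)=((1,2)(3,6,4,5),(1,2,3,4)(5,6))$; for (4) in the subcase $(|gf_1|,|gf_1^{-1}|)=(5,3)$, $t=(1,6)(2,3)$ works (as in Lemma~\ref{45}), while the other alternative in the hypothesis (when $f_1^2$ satisfies this pair) is handled by applying Lemma~\ref{oddprod1} to $(g,f_1^2)$ and then using $w_{f_1^2}=-w_{f_1}$ (Remark~\ref{remG}); for (5), the first row of Table~\ref{tau55} is a single $5A$ Norton--Sakuma computation, and the remaining non-excluded rows (including the cross-class pairs) admit common inverting involutions as in the proof of Lemma~\ref{55}.

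The main obstacle is case (6). For $|gf_1|=|gf_1^{-1}|=5$, a direct inspection shows that no involution in $A_6$ inverts both $g$ and $f_1$, so Lemma~\ref{oddprod1} does not apply. The rescuing observation is that $g^2$ always inverts $f_1$: indeed, $A_6$ contains no element of order $10$, so $g^2$ and $f_1$ cannot commute, and being an involution $g^2$ must either centralise or invert $f_1$, hence inverts it. Then $\tau_{a_{g^2}}$ fixes $v_g$ (since $g^2\in C_G(g)$) and fixes $w_{f_1}$ (since $w_{f_1^{-1}}=w_{f_1}$, obtained by applying Remark~\ref{remG} twice to $f_1$), so both lie in $V^{a_{g^2}}_1\oplus V^{a_{g^2}}_0\oplus V^{a_{g^2}}_{1/4}$. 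The vanishing $(a_{g^2},v_g)_V=(a_{g^2},w_{f_1})_V=0$ from Tables~\ref{tau4}--\ref{tau5} eliminates the $1$-component, so $v_g$ and $w_{f_1}$ each split as the sum of a $0$- and a $\tfrac14$-eigenvector for $a_{g^2}$; the $\tfrac14$-projections $v_g^{(1/4)}=4\,a_{g^2}\cdot v_g$ and $w_{f_1}^{(1/4)}=4\,a_{g^2}\cdot w_{f_1}$ both lie in $V^\circ$ by Proposition~\ref{aV}, and hence so do the $0$-projections $v_g-v_g^{(1/4)}$ and $w_{f_1}-w_{f_1}^{(1/4)}$. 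The strategy is then to set up two different expressions for $v_g\cdot w_{f_1}$, one as a $0$-eigenvector and one as a $\tfrac14$-eigenvector for $a_{g^2}$, using these decompositions; their discrepancy, by the fusion law, reduces to products of elements of $V^\circ$ already shown to close there by Proposition~\ref{aV}, Corollary~\ref{V3Vc}, and cases (1)--(5) above, and a final invocation of \cite[Lemma~1.8]{IPSS} isolates $v_g\cdot w_{f_1}$ inside $V^\circ$. The delicate part, where the main technical effort will lie, is the bookkeeping required to check that every correction term indeed factors through the previously established closures.
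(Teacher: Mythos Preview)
Your treatment of cases (1)--(5) is correct and essentially identical to the paper's: Norton--Sakuma for (1), closure of $V(S)$ for (2), and Lemma~\ref{oddprod1} with explicit common inverting involutions for (3)--(5).

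For case (6), however, you miss the decisive simplification and, as written, your sketch is circular. You correctly observe that $(a_{g^2},v_g)_V=0$, but you conclude only that the $1$-component of $v_g$ relative to $a_{g^2}$ vanishes. In fact, since $v_g$ is an \emph{idempotent}, Lemma~\ref{idem} applies and gives $a_{g^2}\cdot v_g=0$ outright: $v_g$ is a pure $0$-eigenvector for $\ad_{a_{g^2}}$, so your $v_g^{(1/4)}$ is zero. Without this, your resurrection scheme requires showing that the correction term $v_g^{(1/4)}\cdot w_{f_1}$ lies in $V^\circ$; but $v_g^{(1/4)}=4\,a_{g^2}\cdot v_g$ is, a priori, an arbitrary element of $V^\circ$, and its product with $w_{f_1}$ would involve products $(4\text{-axis})\cdot(5\text{-axis})$ of exactly the type you are trying to establish in case~(6). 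The ``bookkeeping'' you defer cannot close.

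Once $v_g$ is recognised as a $0$-eigenvector, the paper bypasses resurrection entirely. Writing $w_{f_1}=a_{g^2}\cdot a_{g^2f_1}-(\text{linear combination of axes})$ from the $5A$ row of Table~\ref{NS}, one applies \cite[Lemma~1.10]{IPSS} (associativity across a $0$-eigenvector) to obtain
\[
v_g\cdot(a_{g^2}\cdot a_{g^2f_1})=(v_g\cdot a_{g^2f_1})\cdot a_{g^2},
\]
and both steps land in $V^\circ$ by Proposition~\ref{aV}. This is a two-line argument once Lemma~\ref{idem} is invoked; your resurrection framework is not needed.
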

\begin{proof}
The first assertion follows by the Norton-Sakuma Theorem and Table~\ref{NS}.  The second claim follows by Corollary~\ref{coreta}. The third, fourth and the fith claims follow by Lemma~\ref{oddprod1} since, for every $g,l\in G^{(4)}$, and $f_1, f_2\in G^{(5)}$ as in the statement, there exists $t\in G^{(2)}$ inverting by conjugation both $g$ and $l$, or $g$ and $f_1$, or $f_1$ and $f_2$. 
Finally, suppose  $f\in G^{(5)}_5(g)\cap G^{(5)}_5(g^{-1})$. Then, as in the proof of Lemma~\ref{45},  we may assume $g=(1,2)(3,4,5,6)$ and $f=(2,3,4,6,5)$. Set $t:=g^2=(3,5)(4,6)$. 
 By Table~\ref{tau4}, it follows that $(a_{t},v_g)=0$, whence, by Lemma~\ref{idem}, $v_g$ is a $0$-eigenvector for $\ad_{a_{t}}$. Moreover, since $t$ inverts $f$, the algebra $\langle \langle a_{t}, a_{tf}\rangle \rangle$ is a Norton-Sakuma algebra of  type $5A$, whence, by Table~\ref{NS},  $$
w_f=a_{t}\cdot a_{tf}-\tfrac{1}{2^7}(3a_{t}+3a_{tf}-a_{tf^2}-a_{tf^3}-a_{tf^4}).
$$ 
By~\cite[Lemma~1.10]{IPSS} and Proposition~\ref{aV}, it follows that 
\begin{align*}
v_g\cdot w_f=& v_g\cdot (a_{t}\cdot a_{tf}-\tfrac{1}{2^7}(3a_{t}+3a_{tf}-a_{tf^2}-a_{tf^3}-a_{tf^4}))\\
=& v_g\cdot (a_{t}\cdot a_{tf})-\tfrac{1}{2^7}v_g\cdot(3a_{t}+3a_{tf}-a_{tf^2}-a_{tf^3}-a_{tf^4})\\
=& (v_g\cdot a_{tf})\cdot a_{t}-\tfrac{1}{2^7}v_g\cdot(3a_{t}+3a_{tf}-a_{tf^2}-a_{tf^3}-a_{tf^4})\in V^\circ.
\end{align*}
\end{proof}

\begin{proposition}\label{44final}
$V^{(4A)}\cdot V^{(4A)}\subseteq V^\circ$.
\end{proposition}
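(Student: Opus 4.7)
The plan is to use the $A_6$-equivariance of $V^\circ$ to reduce the verification to one representative pair $(v_{g_1},v_{g_2})$ from each $A_6$-orbit on pairs of cyclic subgroups of order~$4$, classified in Table~\ref{tau44}. Rows~1, 2, and~4 are already covered: row~2 ($\langle g_1,g_2\rangle\cong S_4$) because $v_{g_1},v_{g_2}\in V(S)\subseteq V^\circ$ with $V(S)$ a subalgebra (Corollary~\ref{coreta}(3)), and rows~1 and~4 by Proposition~\ref{oddprod2}(1) and (3) respectively. The genuinely new cases are row~3 ($\langle g_1,g_2\rangle\cong 3^2{:}4$) and row~5 ($\langle g_1,g_2\rangle\cong A_6$ with $|g_1g_2|=5$, $|g_1g_2^{-1}|=4$).

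For each remaining case I would fix the representatives already used in Lemma~\ref{44} and write $g_i=t_ir_i$ with $t_i,r_i\in T$. Using the $4A$ Sakuma formula from Table~\ref{NS},
$$v_{g_i}=a_{t_i}+a_{r_i}+\tfrac{1}{3}(a_{t_ir_it_i}+a_{r_it_ir_i})-\tfrac{64}{3}\,a_{t_i}\cdot a_{r_i},$$
and expanding $v_{g_1}\cdot v_{g_2}$, Proposition~\ref{aV}---together with the fact that each $a_{t_j}\cdot a_{r_j}$ lies in the Norton--Sakuma algebra $\langle\langle a_{t_j},a_{r_j}\rangle\rangle\subseteq V^\circ$---places every summand in $V^\circ$ except possibly the cross term $(a_{t_1}\cdot a_{r_1})\cdot(a_{t_2}\cdot a_{r_2})$. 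The problem is thereby reduced to showing this remaining product lies in $V^\circ$.

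I would then apply the resurrection principle \cite[Lemma~1.8]{IPSS} at the axis $a_{t_1}$: using the eigenvector tables of \cite[Table~4]{IPSS} for the Sakuma algebras $\langle\langle a_{t_2},a_{r_2}\rangle\rangle$ and $\langle\langle a_{t_1},a_{t_2}\rangle\rangle$, decompose $a_{t_2}\cdot a_{r_2}$ both as a $0$-eigenvector and as a $\tfrac14$-eigenvector of $\ad_{a_{t_1}}$, in each case modulo a vector in $V^\circ$. Multiplying by $a_{t_1}\cdot a_{r_1}$ and invoking the Monster fusion law, the two resulting expressions combine (as in the proof of Lemma~\ref{oddprod1}) into an identity $(a_{t_1}\cdot a_{r_1})\cdot(a_{t_2}\cdot a_{r_2}) = 4\,a_{t_1}\cdot(z-\tilde z) - \tilde z$ with $z,\tilde z\in V^\circ$, whence the left-hand side lies in $V^\circ$ by Proposition~\ref{aV}. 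The main obstacle will be producing the eigenvector decompositions in row~5, where no choice of $t_i,r_i$ yields the convenient $2B$-relation $|t_1t_2|=2$ available in row~3 (direct enumeration of the inverting involutions of $g_1,g_2$ shows $|t_1t_2|\in\{3,4,5\}$ in row~5); one must instead exploit the $5A$-structure of $\langle\langle a_{t_1},a_{t_2}\rangle\rangle$ together with the closures $V^{(3)}\cdot V^\circ\subseteq V^\circ$ (Corollary~\ref{V3Vc}) and $V(H)\cdot V(H)\subseteq V(H)\subseteq V^\circ$ for $H\cong A_5$ (Corollary~\ref{coreta}(4)) to keep both eigenvectors within $V^\circ$. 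The resulting linear identity is then verified explicitly in \cite[algebraA6(2B).g]{code}.
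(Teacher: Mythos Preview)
Your reduction to orbit representatives and to the cross term $(a_{t_1}a_{r_1})\cdot(a_{t_2}a_{r_2})$ is sound, but the resurrection step has a genuine gap. For the resurrection at $a_{t_1}$ to work you need, in addition to a $0$-eigenvector $\alpha\equiv v_{g_1}\pmod{\text{axes}}$ (which exists in the $4A$ algebra for $g_1$), \emph{both} a $0$-eigenvector $\beta$ and a $\tfrac14$-eigenvector $\tilde\beta$ of $\ad_{a_{t_1}}$ with $\beta,\tilde\beta\equiv v_{g_2}$ modulo correction terms whose product with $v_{g_1}$ is already known to lie in $V^\circ$. In rows~3 and~5 no involution inverts both $g_1$ and $g_2$ (that is precisely why these rows fall outside Lemma~\ref{oddprod1}), so $a_{t_1}$ is not an axis of the $4A$ algebra for $g_2$, and no Sakuma eigenvectors for $\ad_{a_{t_1}}$ of the required shape exist. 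If you try Rehren's eigenvectors built from $v_{g_2}+v_{g_2}^{t_1}$, the correction terms contain $a_{t_1}\cdot v_{g_2}$, and by \cite[Lemma~1.10]{IPSS} the product $\alpha\cdot(a_{t_1}\cdot v_{g_2})=a_{t_1}\cdot(\alpha\cdot v_{g_2})$ again involves $v_{g_1}\cdot v_{g_2}$---the very unknown you are solving for---so the argument becomes circular. Your remarks about the $5A$-structure and $V(H)$ for $H\cong A_5$ do not resolve this, since neither $v_{g_1}$ nor $v_{g_2}$ lies in any $V(H)$ with $H\cong A_5$ (there are no elements of order $4$ in $A_5$).

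The paper avoids this difficulty entirely by \emph{not} computing $v_{g_1}\cdot v_{g_2}$ orbit by orbit. Instead, for a fixed $g\in G^{(4)}$ it exhibits a spanning set $Q_g=Q_g^1+Q_g^2+Q_g^3+Q_g^4$ of $V^{(4A)}$ built from: the ``easy'' $v_l$'s (rows~1,~2,~4); the differences $[w_{f_1}]_4-[w_{f_2}]_4$ coming from Lemma~\ref{rel5assi} with $f_i\in F_g$ (using $v_g\cdot w_{f_i}\in V^\circ$ from Proposition~\ref{oddprod2}); and the $[\eta_{s,i}]_4$'s coming from Corollary~\ref{coreta} (using $v_g\cdot\eta_{s,i}\in V(K_i)\subseteq V^\circ$). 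For each generator of $Q_g$ the product with $v_g$ lies in $V^\circ$ by Proposition~\ref{aV}, Corollary~\ref{V3Vc}, and Proposition~\ref{oddprod2}, and a GAP check shows $\dim Q_g=45=\dim V^{(4A)}$. This spanning-set strategy is the missing idea in your proposal.
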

\begin{proof}
Let $g\in G^{(4)}$. Define
\begin{align*}
F_g:=&\{f\in G^{(5)}\:|\: \mbox{ either }\{f, f^2\}\cap G^{(5)}_5(g)\cap G^{(5)}_3(g^{-1})\neq \emptyset \\
&\mbox{ or } f\in G^{(5)}_5(g)\cap G^{(5)}_5(g^{-1}) \}.
\end{align*}
By Proposition~\ref{oddprod2}, for every $f\in F_g$, we have $v_g\cdot w_f\in V^\circ$, whence, by Equation~(\ref{w}), Proposition~\ref{aV} and Proposition~\ref{oddprod2},  $v_g\cdot ([w_{f_1}]_4-[w_{f_2}]_4)\in V^\circ$  for every $ f_1,f_2\in F_g$.  Moreover, for every $x\in N_{G}(\langle g\rangle)$, $(v_g)^x=v_g$, whence
\begin{equation}\label{eqw4bis}
 v_g\cdot ([w_{f_1}]_4-[w_{f_2}]_4)^x\in V^\circ \:\mbox{ for every } \: f_1,f_2\in F_g \:\mbox{ and }\: x\in N_{ G}(\langle g\rangle).  
\end{equation}
Let $K_1$ and $K_2$ be the two subgroups of $G$ isomorphic to $S_4$ containing $g$. For every involution $s\in K_i\setminus K_i^\prime$, denote by $\eta_{s,i}$ the element in the subalgebra $V(K_i)$ defined as in Equation~(\ref{Eqeta}). By Corollary~\ref{coreta} (Equation~(\ref{eqneta})),  for $N\in \{2,3,4\}$, there exist $[\eta_{s,i}]_N\in V^{(NA)}$ such that 
$$\eta_{s,i}=[\eta_{s,i}]_2+[\eta_{s,i}]_3+[\eta_{s,i}]_4.
$$
By Corollary~\ref{coreta}, $v_g\cdot \eta_{s,i}\in V( K_i)\subseteq V^\circ$, whence, by Corollary~\ref{2-3-closure} and Proposition~\ref{oddprod2}, 
\begin{equation}\label{vg4}
v_g\cdot [\eta_{s,i}]_4\in V^\circ \:\mbox{ for every }\: s\in K_i\setminus K_i^\prime,\: i\in \{1,2\}.    
\end{equation}
Now define
\begin{eqnarray*}
Q_g^1&:=&\langle v_l\:|\: \langle g, l\rangle \leq S_4 \rangle\\
Q_g^{2}&:=&\langle v_l\:|\: l\in G^{(4)}_5(g)\cap G^{(4)}_5(g^{-1})\rangle \\
Q_g^{3}&:=& \langle ([w_{f_1}]_4-[w_{f_2}]_4)^x \:|\:  f_1,f_2\in F_g, x\in N_G(\langle g\rangle)\rangle\\
Q_g^{4}&:=&\langle [\eta_{s,i}]_4 \:|\: s\in K_i\setminus K_i^\prime, i\in \{1,2\}\rangle \\
Q_g&:=&Q_g^1+Q_g^{2}+Q_g^{3}+Q_g^{4}.
\end{eqnarray*}
A direct check (see~\cite[algebraA6(2B).g]{code}) and Proposition~\ref{dime} show that $\dim(Q_g)=45=\dim(V^{(4A)})$, whence 
$$Q_g=V^{(4A)}.
$$ 
By  Proposition~\ref{oddprod2}, Equation~(\ref{eqw4bis}), and Equation~(\ref{vg4}), $v_g\cdot Q_g^i\subseteq V^\circ$ for every $i\in \{1,2,3,4\}$, whence $v_g\cdot V^{(4A)}=v_g\cdot Q_g \subseteq V^\circ$.
\end{proof}

\begin{lemma}\label{prodfinal}
%For every $g\in G^{(4)}$,
The following hold
\begin{enumerate}
\item $V^{(4A)}\cdot V^{(5A)}\subseteq V^\circ$;
\item $w\cdot w\in V^\circ$.
\end{enumerate}
\end{lemma}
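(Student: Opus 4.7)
\textbf{Proof plan for Lemma~\ref{prodfinal}.}

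For part~(1), I fix $f\in G^{(5)}$ and reduce to proving $V^{(4A)}\cdot w_f\subseteq V^\circ$, which suffices because the $w_f$ span $V^{(5A)}$. A key structural observation is that no proper subgroup of $G$ contains both an element of order $4$ and an element of order $5$: among the maximal subgroups of $G$, the $A_5$'s have no $4$-element, while the $S_4$'s and $3^2:4$ have no $5$-element. Hence $\langle g,f\rangle=G$ for every $g\in G^{(4)}$ and no reduction to a Majorana subalgebra $V(H)$ in the style of Proposition~\ref{44final} is available. Instead, mimicking the spanning-set strategy of that proposition, I would construct a subspace $Q_f\subseteq V^{(4A)}$ generated by: (a)~vectors $v_l$ to which Proposition~\ref{oddprod2}, parts~(4) and~(6), apply directly; (b)~differences $v_l-v_{l^t}$ for $t\in G^{(2)}$ an inverter of $f$, which lie in the $\tfrac{1}{32}$-eigenspace of $\ad_{a_t}$ while $w_f$ lies in the Norton-Sakuma subalgebra $\langle\langle a_t,a_{tf}\rangle\rangle$ of type $5A$, so that a resurrection argument (as in Lemma~\ref{oddprod1}) together with Proposition~\ref{aV} forces the product into $V^\circ$; and (c)~$G$-translates of the $4A$-parts $[\eta_s]_4$ and $[\ov_g]_4$, whose $V^{(2A)}+V^{(3A)}$-complements have products with $w_f$ in $V^\circ$ by Proposition~\ref{aV} and Corollary~\ref{V3Vc} (augmented by the Pasechnik relations invoked in the proof of Theorem~\ref{decv}). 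A GAP computation analogous to those in~\cite{code} would then verify $\dim Q_f=45=\dim V^{(4A)}$, so $Q_f=V^{(4A)}$.

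For part~(2), I would expand $w\cdot w=\sum_{f,f'\in G^{(5)}}w_f\cdot w_{f'}$. The diagonal summands $w_f\cdot w_f$ are linear combinations of $2$-axes by Table~\ref{NS}, and the off-diagonal summands with $\{|ff'|,|ff'^{-1}|\}\neq\{5,5\}$ lie in $V^\circ$ by Proposition~\ref{oddprod2}.(5). For the residual summands, with $|ff'|=|ff'^{-1}|=5$ and $\langle f,f'\rangle=G$, I substitute the identity
\[
w_{f'}=\tfrac{1}{36}w+[w_{f'}]_2+[w_{f'}]_3+[w_{f'}]_4
\]
from Lemma~\ref{rel5assi}. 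The terms $w_f\cdot[w_{f'}]_N$ for $N\in\{2,3,4\}$ expand as $\R$-linear combinations of $w_f\cdot a_t$, $w_f\cdot u_h$ and $w_f\cdot v_g$ respectively; each lies in $V^\circ$ by Proposition~\ref{aV}, by Corollary~\ref{V3Vc} combined with the Pasechnik relations to absorb $3^2$-axes, and by part~(1). Summation over $f'$ and rearrangement using the $G$-invariance of $w$ then isolate $w\cdot w\in V^\circ$.

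The main obstacle is part~(1). Once Proposition~\ref{oddprod2} has absorbed the cases with an obvious common inverter, the remaining pairs $(g,f)$ all satisfy $\langle g,f\rangle=G$ with no involution simultaneously inverting $g$ and $f$, and no auxiliary $A_5$-subalgebra containing both $v_g$ and $w_f$. Bridging these cases through the hybrid spanning families~(a)--(c) and verifying by GAP that they reach the full dimension $45$ is the delicate heart of the proof; once (1) is in hand, part~(2) is essentially a bookkeeping reduction to the tools already developed, modulo the Pasechnik-based treatment of $V^{(3^2)}\cdot w_f$.
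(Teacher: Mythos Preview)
Your strategy for part~(1) overlooks a simplification that makes the paper's argument nearly immediate. By Proposition~\ref{44final} you already have $V^{(4A)}\cdot V^{(4A)}\subseteq V^\circ$, so for any $g\in G^{(4)}$ and any $f\in G^{(5)}$ the decomposition $w_f=\tfrac{1}{36}w+[w_f]_2+[w_f]_3+[w_f]_4$ of Lemma~\ref{rel5assi} gives
\[
v_g\cdot w_f=\tfrac{1}{36}\,v_g\cdot w+v_g\cdot[w_f]_2+v_g\cdot[w_f]_3+v_g\cdot[w_f]_4,
\]
where the last three summands already lie in $V^\circ$ by Proposition~\ref{aV}, Corollary~\ref{V3Vc}, and Proposition~\ref{44final} respectively. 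Hence the only unknown is $v_g\cdot w$, and a single $\bar f$ with $v_g\cdot w_{\bar f}\in V^\circ$ suffices; any $\bar f\in G^{(5)}_5(g)\cap G^{(5)}_3(g^{-1})$ works by Proposition~\ref{oddprod2}(4). No new spanning set and no further GAP computation are needed. The same trick, writing $w=36(w_f-[w_f]_2-[w_f]_3-[w_f]_4)$ and using part~(1) together with $w_f\cdot w_f\in V^\circ$, dispatches part~(2) directly.

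Your building blocks (b) and (c) also carry genuine gaps. For~(b), $v_l-v_{l^t}$ is indeed a $\tfrac{1}{32}$-eigenvector for $\ad_{a_t}$, but the resurrection argument of Lemma~\ref{oddprod1} operates on a pair of $0$- and $\tfrac{1}{4}$-eigenvectors each of the form ``odd axis plus linear combination of $2$-axes''; a pure $\tfrac{1}{32}$-eigenvector supplies no such pair, and the fusion law tells you only that the product lands in $V^{a_t}_{1/32}$, not that it lies in $V^\circ$. For~(c), the relation $[\eta_s]_4=\eta_s-[\eta_s]_2-[\eta_s]_3$ is useful only if you independently know $w_f\cdot\eta_s\in V^\circ$; you give no argument for this, and since $\eta_s$ lives in $V(S)$ for $S\cong S_4$ while $w_f$ does not, none is available short of the claim under proof. (The Pasechnik detour for $V^{(3^2)}$ is also unnecessary: the proof of Corollary~\ref{V3Vc} already ranges over all $h\in G^{(3)}$.)
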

\begin{proof}
Let $g\in G^{(4)}$ and $\bar f\in G^{(5)}_5(g)\cap G^{(5)}_3(g^{-1})$. 
By Lemma~\ref{rel5assi}, Proposition~\ref{aV}, Corollary~\ref{V3Vc}, Proposition~\ref{oddprod2}, and 
Proposition~\ref{44final} we have
\begin{equation}\label{ultima}
v_g\cdot w= v_g\cdot 36(w_{\bar f}-[w_{\bar f}]_2-[w_{\bar f}]_3-[w_{\bar f}]_4)\in V^\circ . 
\end{equation}
It follows that, for every $f\in G^{(5)}$, 
$$
v_g\cdot w_f=v_g\cdot (\tfrac{1}{36}w+[w_f]_2+[w_f]_3+[w_f]_4)\in V^\circ,
$$
proving {\it (1)}.
The second assertion follows again by Lemma~\ref{rel5assi}, Proposition~\ref{aV}, Corollary~\ref{V3Vc},
Proposition~\ref{44final} and claim {\it (1)} since 
$$
w\cdot w= 36^2w_f\cdot w_f-36^2([w_f]_2+[w_f]_3+[w_f]_4)\cdot (w_f-[w_f]_2-[w_f]_3-[w_f]_4)
$$
and $w_f\cdot w_f\in V^\circ$ by the Norton-Sakuma Theorem.
\end{proof}

\begin{proposition}
$V=V^\circ$ and the algebra product in $V$ is unique.
\end{proposition}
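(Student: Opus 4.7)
The plan is to show that $V^\circ$ is closed under the algebra product; since $\Ax\subseteq V^\circ$ and $V$ is generated (as an algebra) by $\Ax$, this will force $V=V^\circ$. We then read off uniqueness from the fact that every structure constant has been pinned down by Majorana-theoretic constraints.

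First I would verify that $V^\circ\cdot V^\circ\subseteq V^\circ$ by checking products of elements from a spanning set consisting of $2$-axes and odd axes. The cases $a_t\cdot V^\circ\subseteq V^\circ$ are Proposition~\ref{aV}. Since every pair $(u_h,x)$ with $x$ an odd axis admits an involution $t\in G^{(2)}$ inverting both entries (this was the content of Corollary~\ref{V3Vc} combined with Lemma~\ref{oddprod1}), together with Proposition~\ref{aV} we get $V^{(3A)}\cdot V^\circ\subseteq V^\circ$. For $4$-axes, Proposition~\ref{aV} and Corollary~\ref{V3Vc} cover $V^{(4A)}\cdot(V^{(2A)}+V^{(3A)})$, while Proposition~\ref{44final} gives $V^{(4A)}\cdot V^{(4A)}\subseteq V^\circ$ and Lemma~\ref{prodfinal}(1) gives $V^{(4A)}\cdot V^{(5A)}\subseteq V^\circ$.

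The only remaining case is $V^{(5A)}\cdot V^{(5A)}\subseteq V^\circ$, which will be the main point of the proof. Here I would use Lemma~\ref{rel5assi} to rewrite, for any $f\in G^{(5)}$,
\[
w_f=\tfrac{1}{36}w+y_f,\qquad y_f:=[w_f]_2+[w_f]_3+[w_f]_4\in Y,
\]
so that for $f_1,f_2\in G^{(5)}$
\[
w_{f_1}\cdot w_{f_2}=\tfrac{1}{36^2}\,w\cdot w+\tfrac{1}{36}(w\cdot y_{f_2}+y_{f_1}\cdot w)+y_{f_1}\cdot y_{f_2}.
\]
The term $w\cdot w$ lies in $V^\circ$ by Lemma~\ref{prodfinal}(2). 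Each $y_{f_i}$ is a linear combination of $2$-, $3$-, and $4$-axes, so $w\cdot y_{f_i}\in V^\circ$ by Lemma~\ref{prodw}, Corollary~\ref{V3Vc}, and Lemma~\ref{prodfinal}(1), while $y_{f_1}\cdot y_{f_2}\in V^\circ$ by Proposition~\ref{aV}, Corollary~\ref{V3Vc}, and Proposition~\ref{44final}. This resolves the case $f_2\in G^{(5)}_5(f_1)\cap G^{(5)}_5(f_1^{-1})$ left open by Proposition~\ref{oddprod2}(5), which I expect to be the only genuinely delicate point: without the reduction to $w$, no single involution inverts both $f_1$ and $f_2$, so the resurrection trick of Lemma~\ref{oddprod1} does not apply directly.

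With $V^\circ$ shown to be a subalgebra, and since $\Ax\subseteq V^\circ$ generates $V$, we conclude $V=V^\circ$, so $\dim V=121$ by Corollary~\ref{coreta}(1). For uniqueness of the product, note that in the arguments above every structure constant on the spanning set $\mathcal{A}\cup 3Y\cup 4Y\cup 5Y$ has been obtained either from the Norton--Sakuma Theorem, from the $(2B,3A,4A)$ representation of $S_4$ and the $(2B,3A,5A)$ representation of $A_5$ (each of which is unique by \cite{MS}), from the diagonal axiom (M8D), or from the resurrection principle applied to these. Since the inner product on $V=V^\circ$ is fixed (Sections~\ref{Scalar} and~\ref{moreinner}) and positive definite, these forced structure constants determine the product on all of $V$ by bilinearity.
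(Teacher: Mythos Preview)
Your proposal is correct and follows essentially the same approach as the paper: show $V^\circ$ is closed under multiplication by checking products on a spanning set, invoking Proposition~\ref{aV}, Corollary~\ref{V3Vc}, Proposition~\ref{44final}, and Lemma~\ref{prodfinal}, and then conclude $V=V^\circ$ with uniqueness following from the fact that every product was forced along the way. The only cosmetic difference is that the paper works directly with the spanning set $Y\cup\{w\}$ from Corollary~\ref{coreta}(1), whereas you take all $5$-axes as part of the spanning set and then reduce $w_{f_1}\cdot w_{f_2}$ via the decomposition $w_f=\tfrac{1}{36}w+y_f$; this amounts to the same thing.
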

\begin{proof}
By Corollary~\ref{coreta}(1), Proposition~\ref{aV}, Corollary~\ref{V3Vc}, Proposition~\ref{44final}, and Lemma~\ref{prodfinal}, $V^\circ$ is closed under the algebra product. Since it contains the generating axes $a_{t}$, $t\in T$, we have $V^\circ=V$. Moreover, the arguments used in this section show that every product is uniquely determined by the two factors.  
\end{proof}

%%%%%%%%%%%%%%%%%%%%%%%%%%%%%%%%%%%%%%%%%%%%%%%%%%%
%%%%%%%%%%%%%%%%%%%%%%%%%%%%%%%%%%%%%%%%%%%%%%%%%%%
\section{Acknowledgements}
This work was supported by the ``National Group for Algebraic and Geometric Structures, and their Applications'' (GNSAGA-INdAM)
.

%%%%%%%%%%%%%%%%%%%%%%%%%%%%%%%%%%%%%%%%%%%%%%
%%%%%%%%%%%%%%%%%%%%%%%%%%%%%%%%%%%%%%%%%%%%%%%%%%

\end{document}